\documentclass[11pt, a4paper]{amsart}

\usepackage[english]{babel}
\usepackage[utf8x]{inputenc}
\usepackage[T1]{fontenc}

\usepackage[a4paper,top=3cm,bottom=2cm,left=3cm,right=3cm,marginparwidth=1.75cm]{geometry}

\usepackage{amsmath}
\usepackage{mathtools}
\usepackage{}
\usepackage{graphicx}
\usepackage[colorinlistoftodos]{todonotes}
\usepackage[colorlinks=true, allcolors=blue]{hyperref}
\usepackage{comment}
\usepackage{amsfonts}
\usepackage{amsthm}
\usepackage{newlfont}
\usepackage{amscd}
\usepackage{amsgen}
\usepackage{amssymb}
\usepackage{mathrsfs}
\usepackage{longtable}
\usepackage{listings}
\usepackage{extarrows}
\usepackage{tikz}
\usepackage{tikz-cd}
\usetikzlibrary{matrix,shapes,arrows,decorations.pathmorphing}
\tikzset{commutative diagrams/.cd,
mysymbol/.style={start anchor=center,end anchor=center,draw=none}
}

\theoremstyle{plain} \newtheorem{thm}{Theorem}[section]
 
\newtheorem{lemm}[thm]{Lemma}
\newtheorem{prop}[thm]{Proposition}
\theoremstyle{definition}
\newtheorem{defn}[thm]{Definition}
\newtheorem{rem}[thm]{Remark}

\newtheorem{cor}[thm]{Corollary}

\newtheorem{ex}[thm]{Example}

\newcommand{\Hom}{\text{Hom}}
\newcommand{\git}{\mathbin{
  \mathchoice{/\mkern-6mu/}
    {/\mkern-6mu/}
    {/\mkern-5mu/}
    {/\mkern-5mu/}}}

\title{Torelli problem for Calabi-Yau threefolds with GLSM description}

\author{Micha\l\ Kapustka and Marco Rampazzo}

\begin{document}

\begin{abstract}
\noindent We construct a gauged linear sigma model  with two non-birational K\"alher phases which we prove to be derived equivalent, $\mathbb{L}$-equivalent, deformation equivalent and Hodge equivalent. This provides a new counterexample to the birational Torelli problem which admits a simple GLSM interpretation.
\end{abstract}
\maketitle
\section{Introduction}
\noindent There has been recently growing interest both from a point of view of algebraic geometry and string theory  in the study of derived equivalent but non-isomorphic pairs of Calabi--Yau threefolds. The first and most famous example is the Pfaffian-Grassmannian equivalence observed in \cite{Rodland} and proved in \cite{KuznetsovHPD,BorisovCaldararu}. In this case the equivalence is also interpreted in \cite{HoriTong, AddingtonDonovanSegal} in terms of wall-crossing in the associated gauged linear sigma model (GLSM for short).   This last construction has its roots in physics, in particular quantum field theory: from the seminal paper of Witten \cite{Witten}, a rich literature on the subject emerged, alimented by the profound connection with string theory dualities, in particular mirror symmetry. Different examples have been studied, mainly arising from toric varieties (i.e. giving an abelian GLSM.) while there are still few examples of GLSM associated to non-abelian gauge groups.  Some explicit examples of non-abelian GLSM are studied in \cite{Morrisonetal, DonagiSharpe} and these provide a new insight into mirror symmetry of determinantal Calabi--Yau threefolds. Moreover, a rigorous mathematical description of GLSM has been given in \cite{FanJarvisRuan}.\\
\\
In \cite{SharpePantevetal}, the authors study abelian GLSM theories with two non-birational K\"ahler phases and observe a relation with Kuznetsov's homological projective duality \cite{KuznetsovHPD}. They conjecture, in particular, that two K\"ahler phases of a GLSM are always twisted derived equivalent. Examples of such a phenomenon involving noncommutative varieties as well as partial proofs of the conjecture have been found in \cite{SharpePantevetal,Sharpe}. The case of non-abelian GLSM has been treated from the physics side in \cite{Hori, HoriKnapp, DonagiSharpe} leading to the same conjecture for symmetric and skew-symmetric degeneracy loci. The work of \cite{Hori} has been reinterpreted in mathematical terms in \cite{RennemoSegal}, where the conjecture was proven for both types of degeneracy loci. More generally, a relation between homological projective duality and variations of GIT stability for Landau-Ginzburg models has been established in \cite{Renn17}.\\
\\
Note that up to now most known geometric constructions of non-abelian GLSM admitting two K\"ahler phases and leading to derived equivalent pairs of Calabi--Yau threefolds are obtained by determinantal constructions (see \cite{Hori, HoriKnapp, Renn15, HosonoTakagi, Morrisonetal}).\\
On the other hand, an interesting case of derived equivalent and non-birational pairs of Calabi--Yau threefolds was recently discovered in the context of the Torelli problem. This is the family $\mathcal X_{25}$ of Calabi--Yau threefolds of degree 25 in $\mathbb{P}^9$ studied by \cite{GrossPopescu, Kapustka, Kanazawa, OttemRennemo, BorisovCaldararuPerry}. The elements of this family, first introduced in \cite{GrossPopescu}, are given by the intersection of two generic $PGL(10)$--translates of the Grassmannian $G(2,V_5)$ embedded in $\mathbb P^9$ via the Pl\"ucker embedding.\\
\\
Independently, in \cite{OttemRennemo} and \cite{BorisovCaldararuPerry}, it is proved that for a general Calabi--Yau threefold $\widetilde X\in \mathcal{X}_{25}$ intersecting the projective dual varieties of both translates one obtains another Calabi--Yau threefold $\widetilde Y\in \mathcal{X}_{25}$ which is in general not isomorphic to $\widetilde X$, but which is derived equivalent, deformation equivalent and Hodge equivalent to $\widetilde X$. We shall say in such case that $\widetilde X$ is dual to $\widetilde Y$.  
Such general dual pairs of Calabi--Yau threefolds in $\mathcal{X}_{25}$, in particular, provide counterexamples to the birational Torelli problem. Note that some GLSM interpretation of the duality on $\mathcal{X}_{25}$ has just appeared in \cite{CaldararuKnappSharpe}.\\
\\
In \cite{BorisovCaldararuPerry}, it is additionally shown that the following relation holds in the Grothendieck ring of varieties:
\begin{equation}\label{LequivalenceOttem}
([\widetilde X]-[\widetilde Y]) \mathbb L^4 =0,
\end{equation}
where $\mathbb L$ is the class of the affine line. 
This means that $\widetilde{X}$ and $\widetilde{Y}$ are also so-called $\mathbb L$-equivalent.\\
\\
A similar case has also been discussed in the work of Manivel \cite{Manivel}, where the intersection of two translates of the ten-dimensional spinor variety in the projectivization of a sixteen-dimensional half-spin representation has been investigated. Here the intersection is a Calabi--Yau fivefold, and the projective dual construction gives rise to a non-birational Calabi--Yau fivefold, still, the two varieties have been proven to be deformation equivalent, derived equivalent, $\mathbb L$-equivalent and Hodge equivalent. Moreover, in \cite{Maniveletal}, techniques to construct Calabi--Yau threefolds and fourfolds as orbital degeneracy loci have been explained. This leads, in particular, to all families discussed above and may serve as a source of further examples of derived equivalent and $\mathbb{L}$-equivalent pairs of Calabi--Yau manifolds.\\
\\
Let us point out that the notion of $\mathbb L$-equivalence is somehow related to the notion of derived equivalence. The problem whether derived equivalence may imply $\mathbb L$-equivalence has been first considered in \cite{IMOUnew}, and short after that the positive answer has been stated as a conjecture in \cite{KuznetsovShinder}.
Note that it has already been proven (see \cite{IMOUnew,Efimov}) that there is no implication between derived and $\mathbb L$-equivalence for abelian varieties. Still, up to now,  no counterexample is known among simply connected Calabi--Yau manifolds.\\

\noindent In this paper, we consider the family $\bar{\mathcal{X}}_{25}$ of Calabi--Yau threefolds given as zero loci of sections of the vector bundle $Q^{\vee}(2)$ on $G(2,V_5)$. As it was pointed out in \cite{Kapustka, InoueItoMiura} and \cite{OttemRennemo}, these varieties, still being smooth, belong to the boundary of $\mathcal X_{25}$ and can be interpreted as the intersections of infinitesimal translates of $G(2,V_5)$. For each such a manifold $X$ we provide a construction of a dual Calabi--Yau threefold $Y$ in the same family, which is not birational, but is derived equivalent and $\mathbb L$-equivalent to $X$. Then, as pointed out in \cite[Prop 2.1]{OttemRennemo}, they are also Hodge equivalent i.e. their periods define equivalent integral Hodge structures.
We furthermore observe that our duality concept is an extension of the duality studied in \cite{OttemRennemo,BorisovCaldararuPerry} to the investigated boundary component $\bar{\mathcal{X}}_{25}$ of $\mathcal{X}_{25}$. As explained in \cite{OttemRennemo}, we can then apply the Matsusaka–Mumford theorem (see \cite{MM}), to provide another proof of the fact that a general element of $\mathcal{X}_{25}$ and its dual are not isomorphic.\\
\\
We notice furthermore that for $X,Y\in \bar{\mathcal{X}}_{25}$ a dual pair of Calabi--Yau threefolds we have \begin{equation}\label{our rel of L equivalence}
([X]-[Y]) \mathbb L^2 =0.
\end{equation}
Comparing with (\ref{LequivalenceOttem}), we see that the exponent of $\mathbb L$ in our formula which is valid on the boundary divisor $\bar{\mathcal{X}}_{25}$ is smaller than the exponent known to annihilate the difference of classes of a dual pair in the general case $\mathcal{X}_{25}$.
A similar phenomenon occurs in the Pfaffian--Grassmannian equivalence. The exponent is known only to be bounded by 6 in general and it is proven to be 1 on a boundary divisor (see \cite{IMOU}).
It is an interesting problem proposed in \cite{KuznetsovShinder} to understand the geometric meaning  of the minimal exponent of $\mathbb L$ annihilating a difference of two classes of varieties.
In \cite{KuznetsovShinder} this exponent is conjectured to be related to the ranks of Fourier-Mukai kernels associated to derived equivalences between the two varieties. One of the advantages of our direct approach is that from our proof of derived equivalence of studied pairs of varieties one can explicitly find such Fourier Mukai kernels. Note also that in general it is a nontrivial problem to understand the behaviour of both derived and $\mathbb L$ equivalence in families. We hope that our example, exhibiting  a nontrivial behaviour of these equivalences in families, may provide further insight into that subject. \\
\\
Finally, we present a GLSM description of Calabi--Yau manifolds in our family $\bar{\mathcal{X}}_{25}$ which explains the duality equivalence of $X$ and $Y$ in terms of wall crossing. We thus provide a GLSM construction with two non-birational K\"ahler  phases with simple geometric realizations as zero loci of sections of a vector bundle, which are derived equivalent, deformation equivalent and $\mathbb L$-equivalent. In fact, our GLSM construction is based on a variation of GIT (as in \cite[Sect. 7]{BFK}) and, to our knowledge, it is the only example known so far where such VGIT leads directly to two non-birational Calabi--Yau phases.\\
\\
Our argument relies on the following diagram that we establish in Section \ref{section description of the families}.
\begin{equation}\label{bigdiagram}
\begin{tikzcd}
 & E\arrow[hookrightarrow]{r}\arrow{lddd} & M\arrow{dd}\arrow{lddd}[swap]{f_1}\arrow{rddd}{f_2}\arrow[hookleftarrow]{r} & E'\arrow{rddd} & \\
 & & & & \\
 & & F\arrow{ld}{p} \arrow{rd}[swap]{q} & & \\
X\arrow[hookrightarrow]{r} & G(2,V_5) & & G(3,V_5)\arrow[hookleftarrow]{r} & Y\\
\end{tikzcd}
\end{equation}
The notation is the following:
\begin{itemize}
\item $V_5$ is a five-dimensional vector space and $G(k,V_5)$ stands for the Grassmannian of $k$ dimensional subspaces in $V_5$.
\item $F$ is the flag variety given by the following incidence correspondence:
\begin{equation}
F=\{([V],[W])\in G(2,V_5)\times G(3,V_5) : V\subset W\}
\end{equation}
\item $p$ and $q$ are the natural projections from the flag variety $F$ to the two Grassmannians.
\item  The flag variety $F$ has Picard group generated by the pullbacks of the hyperplane bundles of the two Grassmannians $G(2,V_5)$ and $G(3,V_5)$. We denote the pullbacks of the hyperplane sections of the Grassmannians $G(2,V_5)$ and $G(3,V_5)$ by $\mathcal O(1,0)$ and $\mathcal O(0,1)$ respectively. In this notation $M$ is a hyperplane section of the flag variety $F$, i.e. the zero locus of a section $s\in H^0(F, \mathcal O(1,1))$.

\item We prove (see Lemma \ref{thepushforwards}) that $p_*\mathcal O(1,1)=\mathcal{Q}_1^\vee(2)$ and $q_*\mathcal O(1,1)=\mathcal{U}_2(2)$, where we call $\mathcal U _i$ the universal bundle of a Grassmannian $G(i,V_5)$ and $\mathcal Q_i$ its universal quotient bundle. The varieties $X$ and $Y$ are, respectively, the zero loci of the sections $p_*s$ and $q_*s$ of $\mathcal Q_2^\vee(2)$ and $\mathcal U_3(2)$, 
\item $f_1$ is a fibration over $G(2,V_5)$ with  fiber isomorphic to $\mathbb P^1$, for points outside the subvariety $X$ whereas the fibers are isomorphic to $\mathbb P^2$ for points on $X$. Similarly $f_2$ is a map onto $G(3,V_5)$ whose fibers are $\mathbb P^1$ outside $Y$ and $\mathbb{P}^2$ over $Y$.
\end{itemize}

\noindent In Section \ref{section non birationality}, we  prove that, in general, if $X$ and $Y$ are dual they are not birational. Using the fact that they have Picard number equal to 1, we just need to prove that they are not projectively equivalent. The latter is done in several steps. First, we prove that $X$ and $Y$ are contained in unique Grassmannians. Furthermore the hyperplane section $M$ of $F$ is also uniquely determined both by $X\subset G(2,V_5)$ and by $Y\subset G(3,V_5)$. We then deduce that a linear isomorphism between $X$ and $Y$ must lift to an automorphism of the flag variety $F$ that preserves $M$. We describe explicitly the action of these automorphisms on hyperplane sections of $F$ and find a concrete hyperplane which is not fixed by any of them. \\
\\
The $\mathbb L$-equivalence of $X$ and $Y$ is\ a direct consequence of diagram (\ref{bigdiagram}). It is presented in Section \ref{section L-equivalence}.
In Section \ref{section derived}, we show that the derived categories of coherent sheaves of $X$ and $Y$ can be embedded in two different orthogonal decompositions of the derived category of the hyperplane section $M$ of the flag variety $F$. This fact allows us to prove the derived equivalence of $X$ and $Y$ with a sequence of mutations.
Section \ref{section GLSM} is devoted to the establishing of a GLSM with two K\"ahler phases representing dual Calabi-Yau threefolds from the family $\bar{\mathcal{X}}_{25}$.

\section{The description of the duality}\label{section description of the families}
\noindent Hereafter we will describe the families appearing in diagram (\ref{bigdiagram}) in greater detail. In particular, we define the notion of duality between elements of these families.\\
\\
First of all, with Lemma \ref{thepushforwards} we establish a relation between the vector bundles we described in diagram (\ref{bigdiagram}) proving that the pushforwards of $\mathcal O(1,1)$ are exactly the bundles appearing in the diagram. 
\begin{lemm}\label{thepushforwards}
Let $\mathcal O(1,1)=p^*\mathcal{O}(1)\otimes q^*\mathcal{O}(1) $ be the hyperplane bundle on the flag variety $F$. Then the pushforwards of $\mathcal O(1,1)$ with respect to $p$ and $q$ are, respectively, $\mathcal Q_2^\vee(2)$ and $\mathcal U_3(2)$.
\end{lemm}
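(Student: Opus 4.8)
The plan is to exploit the two projective-bundle structures carried by the flag variety $F$ and then reduce everything to the projection formula together with the standard computation of the direct image of a relative hyperplane bundle. First I would record the two tautological descriptions of $F$. Over $G(2,V_5)$ the fibre of $p$ over $[V]$ parametrizes the lines $W/V\subset V_5/V$, so $p\colon F\to G(2,V_5)$ is the projective bundle $\mathbb P(\mathcal Q_2)$ of lines in the rank-three bundle $\mathcal Q_2$; symmetrically, the fibre of $q$ over $[W]$ parametrizes the planes $V\subset W$, i.e. the lines $\mathrm{Ann}(V)\subset W^\vee$, so $q\colon F\to G(3,V_5)$ is the projective bundle $\mathbb P(\mathcal U_3^\vee)$. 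On $F$ these two structures are linked by the tautological flag $p^*\mathcal U_2\hookrightarrow q^*\mathcal U_3\hookrightarrow V_5\otimes\mathcal O_F$, whose line-bundle quotient I denote $\mathcal C=q^*\mathcal U_3/p^*\mathcal U_2$.

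Next I would rewrite $\mathcal O(1,1)$ in terms of these tautological bundles. Via the Pl\"ucker identification of the hyperplane bundles with determinants of the universal subbundles one has $\mathcal O(1,0)=\det(p^*\mathcal U_2)^\vee$ and $\mathcal O(0,1)=\det(q^*\mathcal U_3)^\vee$. Taking determinants in the short exact sequence $0\to p^*\mathcal U_2\to q^*\mathcal U_3\to\mathcal C\to 0$ gives $\det(q^*\mathcal U_3)=\det(p^*\mathcal U_2)\otimes\mathcal C$, whence two symmetric reformulations: relative to $p$ one obtains $\mathcal O(1,1)=p^*\mathcal O(2)\otimes\mathcal C^\vee$, and relative to $q$ one obtains $\mathcal O(1,1)=q^*\mathcal O(2)\otimes\mathcal C$. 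The key observation is that $\mathcal C^\vee$ is precisely the relative hyperplane bundle $\mathcal O_{\mathbb P(\mathcal Q_2)}(1)$ for $p$, while $\mathcal C$ is the relative hyperplane bundle $\mathcal O_{\mathbb P(\mathcal U_3^\vee)}(1)$ for $q$.

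The computation then follows from the projection formula and the identity $\pi_*\mathcal O_{\mathbb P(E)}(1)=E^\vee$. Pushing forward along $p$ gives $p_*\mathcal O(1,1)=\mathcal O(2)\otimes p_*\mathcal C^\vee=\mathcal O(2)\otimes\mathcal Q_2^\vee=\mathcal Q_2^\vee(2)$, and pushing forward along $q$ gives $q_*\mathcal O(1,1)=\mathcal O(2)\otimes q_*\mathcal C=\mathcal O(2)\otimes(\mathcal U_3^\vee)^\vee=\mathcal U_3(2)$. Since in both cases the relative bundle restricts to $\mathcal O(1)$ on the $\mathbb P^2$-fibres, it has vanishing higher cohomology on fibres, so cohomology and base change ensure the pushforwards are locally free of rank $3$ with the stated fibres and that no $R^{>0}$ terms intervene.

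The hard part is not the geometry but the bookkeeping of the two $\mathbb P$-bundle conventions and of the dualizations: one must fix once and for all whether $\mathbb P(-)$ denotes lines or one-dimensional quotients, and then check that it is $\mathcal C^\vee$ (not $\mathcal C$) whose pushforward along $p$ is $\mathcal Q_2^\vee$, whereas it is $\mathcal C$ (not $\mathcal C^\vee$) whose pushforward along $q$ is $\mathcal U_3$. As an independent check one can argue fibrewise: the fibre of $p$ over $[V]$ embeds linearly as a $\mathbb P^2\subset\mathbb P(\wedge^3 V_5)$ through $\ell\mapsto(\wedge^2 V)\wedge\ell$, so that $q^*\mathcal O(1)$ restricts to $\mathcal O_{\mathbb P^2}(1)$ and $H^0$ recovers $(V_5/V)^\vee=\mathcal Q_2^\vee|_{[V]}$, with the twist by $\mathcal O(2)$ produced by the projection formula. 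Once the conventions are pinned down the two assertions are genuinely symmetric and the argument is purely formal.
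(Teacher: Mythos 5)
Your proof is correct and takes essentially the same route as the paper: both exploit the two projective-bundle structures on $F$ and conclude from the projection formula together with the standard formula for the pushforward of the relative hyperplane bundle. The only difference is in the bookkeeping step identifying $\mathcal O(1,1)$ as the (twisted) relative hyperplane bundle: the paper does this by a first-Chern-class computation using the relative Euler and tangent sequences for $\mathbb P(\mathcal Q_2^\vee(2))$ and $\mathbb P(\mathcal U_3(2))$, whereas you obtain it directly from the tautological flag $p^*\mathcal U_2\hookrightarrow q^*\mathcal U_3$ and determinants of the quotient $\mathcal C$ — a cleaner and more explicit verification of the same fact.
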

\begin{proof}
Observe that the flag variety $F$ can be interpreted as the projectivization of the rank 3 quotient  bundle $\mathcal{Q}^{\vee}$ on the Grassmannian $G(2,V_5)$, hence also the projectivization of $\mathcal{Q}^{\vee}(2)$. In this case, we have the relative Euler sequence
\begin{equation}\label{relativeeuler1}
0\longrightarrow\Omega^1_{G(2,V_5)|F}(a,b)\longrightarrow \mathcal Q_2^\vee(2)\longrightarrow\mathcal O_{F_1}(1)\longrightarrow 0
\end{equation}
and on $G(3,V_5)$
\begin{equation}\label{relativeeuler2}
0\longrightarrow\Omega^1_{G(3,V_5)|F}(a,b)\longrightarrow \mathcal U_3(2)\longrightarrow\mathcal O_{F_2}(1)\longrightarrow 0
\end{equation}
where for $i=2,3$ we called $\mathcal O_{F_i}(1)$ the Grothendieck relative $\mathcal O_{\mathrm{P}(\mathcal E_i)}(1)$ associated to the corresponding bundle $\mathcal E_2=\mathcal Q_2^{\vee}(2)$ and $\mathcal E_3=\mathcal U_3(2)$. We can compute the first Chern class of the relative $\Omega_{G(i,V_5)|F}^1$ from the relative tangent bundle sequences, which is
\begin{equation}
0\longrightarrow T_{G(3,V_5)|F}\longrightarrow T_F\longrightarrow T_{G(3,V_5)}\longrightarrow 0
\end{equation}
and the same sequence holds for $G(2,V_5)$. In both the sequences (\ref{relativeeuler1}) and (\ref{relativeeuler2}), computing the first Chern class we get $\mathcal O_{F_i}(1)=\mathcal O(1,1)$ for $i=1,2$. The remaining part of the proof follows from a general fact that the pushforward of the Grothendieck line bundle of a vector bundle $\mathcal E$ with respect to the surjection to the base is $\mathcal E$.
\end{proof}
\noindent The picture emerging is the following:
\begin{equation}
\begin{tikzcd}
& &\mathcal O(1,1)\arrow{d}\arrow{ld}[swap]{p_*} \arrow{rd}{q_*} & &\\
&  \mathcal Q_2^\vee(2)\arrow{d}& F \arrow{ld}[swap]{p} \arrow{rd}{q} & \mathcal U_3(2)\arrow{d} & \\
X \arrow[hookrightarrow]{r} & G(2,V_5) &  & G(3,V_5)\arrow[hookleftarrow]{r} & Y
\end{tikzcd}
\end{equation}
Moreover, we denote $X:=Z(p_*s)$ the variety of all the points in $G(2,V_5)$ where $p_*s$ vanishes. But $x\in Z(p_*s)$ is equivalent to $s(p^{-1}(x))=0$. Thus, since $F$ is a $\mathbb P^2$ bundle on $G(2,V_5)$, the fibers of the projection from $M$ to $G(2,V_5)$ over points outside $X=Z(p_*s)$ are isomorphic to $\mathbb P^1$, whereas the fibers over $X$ will be isomorphic to $\mathbb P^2$. The same applies to $Y$ in $G(3,V_5)$ and the projection $q|_M$.
\begin{lemm}\label{uniquelydetermined}
Let $X$ be the zero locus of a section $s_2\in H^0(G(2,V_5), \mathcal Q_2^\vee(2))$. Then $s_2$ is uniquely determined by $X$ up to scalar multiplication.\\
Similarly, if $Y$ is the zero locus of a section $s_3$ of $\mathcal U_3(2)$ on $G(3,V_5)$, $s_3$ is uniquely determined by $Y$.
\end{lemm}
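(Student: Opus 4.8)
The plan is to translate the statement into a cohomological vanishing. Writing $\mathcal{E} := \mathcal{Q}_2^\vee(2)$ and letting $\mathcal{I}_X$ be the ideal sheaf of $X = Z(s_2)$, a second section $s_2'$ has the same zero scheme as $s_2$ exactly when $s_2'$ restricts to zero on $X$, i.e. when $s_2'$ lies in the kernel of $H^0(G(2,V_5),\mathcal{E}) \to H^0(X,\mathcal{E}|_X)$. From the twisted ideal sequence this kernel is $H^0(G(2,V_5),\mathcal{I}_X\otimes\mathcal{E})$, which already contains $s_2$. Hence the lemma is equivalent to the assertion that $\dim H^0(G(2,V_5),\mathcal{I}_X\otimes\mathcal{E}) = 1$, and I would prove exactly this.

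Since for $X$ in the family the zero locus $Z(s_2)$ has the expected codimension $3$ (equal to the rank of $\mathcal{E}$), the section $s_2$ is regular and the Koszul complex
\begin{equation*}
0 \to \wedge^{3}\mathcal{E}^{\vee} \to \wedge^{2}\mathcal{E}^{\vee} \to \mathcal{E}^{\vee} \to \mathcal{O}_{G(2,V_5)} \to \mathcal{O}_X \to 0
\end{equation*}
is exact, its differentials being contraction with $s_2$. Tensoring with $\mathcal{E}$ and truncating yields a locally free resolution
\begin{equation*}
0 \to \wedge^{3}\mathcal{E}^{\vee}\otimes\mathcal{E} \to \wedge^{2}\mathcal{E}^{\vee}\otimes\mathcal{E} \to \mathcal{E}^{\vee}\otimes\mathcal{E} \to \mathcal{I}_X\otimes\mathcal{E} \to 0 .
\end{equation*}
The crucial point is that the rightmost differential $\mathcal{E}^{\vee}\otimes\mathcal{E} \to \mathcal{E}$ (contraction against $s_2$) sends the identity endomorphism to $s_2$ itself; thus on global sections the class of $s_2$ is precisely the image of $\mathrm{id} \in H^0(\mathcal{E}^{\vee}\otimes\mathcal{E})$, and the one free parameter $\lambda$ will come from here.

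I would then feed this resolution into the hypercohomology spectral sequence (equivalently, split it into short exact sequences) to express $H^0(\mathcal{I}_X\otimes\mathcal{E})$ in terms of $H^0(\mathcal{E}^{\vee}\otimes\mathcal{E})$, $H^{\le 1}(\wedge^{2}\mathcal{E}^{\vee}\otimes\mathcal{E})$ and $H^{\le 2}(\wedge^{3}\mathcal{E}^{\vee}\otimes\mathcal{E})$. Because $\mathcal{E}$ is a simple (indeed stable, homogeneous) bundle, $H^0(\mathcal{E}^{\vee}\otimes\mathcal{E}) = \mathbb{C}\cdot\mathrm{id}$, supplying the scalar; everything else must be shown to vanish. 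Using $\wedge^{3}\mathcal{E}^{\vee} = \det\mathcal{E}^{\vee} = \mathcal{O}(-5)$ and the rank-$3$ identity $\wedge^{2}\mathcal{E}^{\vee} \cong \mathcal{E}\otimes\det\mathcal{E}^{\vee}$, the two higher terms rewrite as explicit $\mathrm{SL}(V_5)$-homogeneous bundles on $G(2,V_5)$, namely twists of $\mathcal{Q}_2^{\vee}$, $\mathrm{Sym}^{2}\mathcal{Q}_2^{\vee}$ and $\mathcal{Q}_2$, whose relevant cohomology I would kill by a direct application of the Borel--Weil--Bott theorem.

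The main obstacle is this last bookkeeping: one must verify that each intermediate cohomology group vanishes so that the spectral sequence degenerates to $H^0(\mathcal{I}_X\otimes\mathcal{E}) = \mathbb{C}\cdot s_2$, and one must check carefully that the twisted Koszul differentials are genuinely the contraction maps claimed, so that $\mathrm{id}$ really maps to $s_2$. Finally, the statement for $Y\subset G(3,V_5)$ with $s_3\in H^0(\mathcal{U}_3(2))$ follows verbatim through the isomorphism $G(3,V_5)\cong G(2,V_5^{\vee})$, under which $\mathcal{U}_3(2)$ is again a simple homogeneous rank-$3$ bundle of the same type; no further computation is required.
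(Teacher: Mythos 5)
Your proposal is correct and is essentially the paper's own argument: the paper likewise works with the Koszul resolution of $\mathcal I_X$ and uses $\operatorname{Ext}^\bullet(\mathcal Q_2,\mathcal Q_2)=\mathbb C[0]$ to force the scalar, so both proofs come down to identifying $\operatorname{Hom}(\mathcal E^\vee,\mathcal I_X)=H^0(\mathcal I_X\otimes\mathcal E)$ with $\mathbb C\cdot s_2$, where $\mathcal E=\mathcal Q_2^\vee(2)$. The Borel--Weil--Bott bookkeeping you defer does check out --- $\wedge^2\mathcal E^\vee\otimes\mathcal E\cong \operatorname{Sym}^2\mathcal Q_2^\vee(-1)\oplus\mathcal Q_2(-2)$ has vanishing $H^1$ (indeed $\mathcal Q_2(-2)$ is acyclic and $\operatorname{Sym}^2\mathcal Q_2^\vee(-1)$ has cohomology only in degree $2$), while $\wedge^3\mathcal E^\vee\otimes\mathcal E\cong\mathcal Q_2^\vee(-3)$ is acyclic --- and these vanishings are exactly what rigorously justifies the lifting of the identity between the two resolutions, a step the paper attributes only tersely to ``stability.''
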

\begin{proof}
We will prove the result for $G(2,V_5)$, the proof for the case of $G(3,V_5)$ is identical. Let us suppose $X$ is the zero locus of two sections $s_2$ and $\widetilde{s}_2$. Then, the Koszul resolution with respect to these two sections reads
\begin{equation}
\begin{tikzcd}
\cdots\arrow{r}&\mathcal Q_2(-2)\arrow{r}{\alpha_{s_2}}\arrow{d} & \mathcal I_X\arrow{r}\arrow[equal]{d} & 0 \\
\cdots\arrow{r}&\mathcal Q_2(-2)\arrow{r}{\alpha_{\widetilde{s}_2}} & \mathcal I_X\arrow{r} & 0 
\end{tikzcd}
\end{equation}
thus, in order to have two sections defining the same $X$, the identity of the ideal sheaf, for the stability of $\mathcal Q_2(-2)$,  lifts to an automorphism of $\mathcal Q_2(-2)$. However, since
\begin{equation}
\operatorname{Ext}^\bullet(\mathcal Q_2,\mathcal Q_2)=\mathbb C[0]
\end{equation}
the only possible automorphisms of $\mathcal Q_2(-2)$ are scalar multiples of the identity.
\end{proof}
\begin{cor} Let $X=Z(s_2)\subset G(2,V_5)$. Then there exists a unique hyperplane section $M$ of $F$ such that the fiber $p|_M^{-1}(x)$ is isomorphic to $\mathbb{P}^2$ for $x\in X$ and is isomorphic to $\mathbb{P}^1$  for $x\in G(2,V_5)\setminus X$. Similarly for $Y=Z(s_3)\subset G(3,V_5)$ there exists a unique hyperplane section $M$ of $F$ such that the fiber $q|_M^{-1}(x)$ is isomorphic to $\mathbb{P}^2$ for $x\in Y$ and is isomorphic to $\mathbb{P}^1$  for $x\in G(3,V_5)\setminus Y$.
\end{cor}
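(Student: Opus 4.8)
The plan is to reduce both existence and uniqueness to the two preceding results: the identification $p_*\mathcal{O}(1,1)=\mathcal{Q}_2^\vee(2)$ of Lemma \ref{thepushforwards}, and the rigidity statement of Lemma \ref{uniquelydetermined}. The key observation is that, since $F=\mathbb{P}(\mathcal{Q}_2^\vee(2))$ is a projective bundle over $G(2,V_5)$ and $\mathcal{O}(1,1)$ restricts to the Grothendieck line bundle $\mathcal O_{F_1}(1)$ on each $\mathbb{P}^2$-fiber, one has $R^ip_*\mathcal{O}(1,1)=0$ for $i>0$ (fiberwise $H^i(\mathbb{P}^2,\mathcal{O}(1))=0$ for $i>0$), so the Leray spectral sequence collapses and pushforward induces a canonical isomorphism
\[
p_*\colon H^0(F,\mathcal{O}(1,1)) \xrightarrow{\ \sim\ } H^0\!\big(G(2,V_5),\mathcal{Q}_2^\vee(2)\big).
\]
Hyperplane sections of $F$ are by definition the zero loci of sections of $\mathcal{O}(1,1)$, so this isomorphism is exactly the dictionary translating a hyperplane section $M=Z(s)$ into the section $s_2=p_*s$, and the fiberwise discussion preceding Lemma \ref{uniquelydetermined} already records that $Z(p_*s)$ is precisely the locus over which the fiber of $p|_M$ jumps from $\mathbb{P}^1$ to $\mathbb{P}^2$.

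For existence, I would take the section $s_2\in H^0(G(2,V_5),\mathcal{Q}_2^\vee(2))$ defining $X$ and let $s\in H^0(F,\mathcal{O}(1,1))$ be its preimage under the above isomorphism, setting $M:=Z(s)$. Then $p_*s=s_2$, so $Z(p_*s)=X$; restricting $s$ to a fiber $p^{-1}(x)\cong\mathbb{P}^2$ yields a section of $\mathcal{O}_{\mathbb{P}^2}(1)$, i.e.\ a linear form, which vanishes identically if and only if $(p_*s)(x)=0$. Hence over $x\in X$ the whole fiber lies in $M$, giving $(p|_M)^{-1}(x)\cong\mathbb{P}^2$, while over $x\notin X$ the form is a nonzero hyperplane and $(p|_M)^{-1}(x)\cong\mathbb{P}^1$. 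This $M$ has the required property.

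For uniqueness, suppose $M'=Z(s')$ is any hyperplane section with the same fiber behaviour. The $\mathbb{P}^2$-versus-$\mathbb{P}^1$ dichotomy says precisely that $s'$ vanishes on the whole fiber over $x$ if and only if $x\in X$; equivalently $Z(p_*s')=X$. By Lemma \ref{uniquelydetermined}, $p_*s'$ is a nonzero scalar multiple of $s_2=p_*s$, and since $p_*$ is injective on global sections, $s'$ is the same scalar multiple of $s$. Hence $Z(s')=Z(s)$, that is $M'=M$. The statement for $Y\subset G(3,V_5)$ follows verbatim after replacing $p$, $\mathcal{Q}_2^\vee(2)$ and the first half of Lemma \ref{uniquelydetermined} by $q$, $\mathcal{U}_3(2)$ and its second half.

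The only point requiring genuine care — the main obstacle — is verifying that the set-theoretic fiber condition in the statement is equivalent to the scheme-theoretic equality $Z(p_*s')=X$, so that Lemma \ref{uniquelydetermined} applies with its scalar conclusion. Concretely, one must check that a $\mathbb{P}^2$-fiber forces $s'$ to vanish identically on that fiber and hence $x$ to lie in the zero scheme of $p_*s'$, and conversely that a $\mathbb{P}^1$-fiber forces $s'$ to restrict to a nonzero linear form; both follow from the projective-bundle description of $F$, but this is where the argument is more than a purely formal consequence of the cited lemmas.
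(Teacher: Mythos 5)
Your proof is correct and follows essentially the same route as the paper: both use the projective-bundle structure of $F$ together with Lemma \ref{thepushforwards} to identify $H^0(F,\mathcal O(1,1))\cong H^0(G(2,V_5),\mathcal Q_2^\vee(2))$ via $p_*$, define $M$ as the zero locus of the unique preimage of $s_2$, read off the $\mathbb{P}^1$/$\mathbb{P}^2$ fiber dichotomy fiberwise, and deduce uniqueness from Lemma \ref{uniquelydetermined} plus injectivity of $p_*$ on sections. Your version merely spells out details the paper leaves implicit (the vanishing of higher direct images and the set-theoretic identification of the jump locus with $Z(p_*\tilde s)$).
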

\begin{proof} We consider only the case $X=Z(s_2)\subset G(2,5)$ the other being completely analogous. Since $F$ is the projectivization of a vector bundle over $G(2,V_5)$, using Lemma \ref{thepushforwards}, then the pushforward $p_*$ defines a natural isomorphism 
$$H^0(F,\mathcal O(1,1))=H^0(G(2,V_5),\mathcal Q_2^{\vee}(2)).$$
Hence $s_2=p_*(s)$ for a unique $s\in H^0(F,\mathcal O(1,1))$. We define $M=Z(s)$ which satisfies the assertion by the discussion above. The uniqueness of $M$ follows from Lemma \ref{uniquelydetermined}. Indeed, for any hyperplane section $\tilde{M}=Z(\tilde{s})$, the fibers  $p|_{\tilde{M}}^{-1}(x)$ are isomorphic to $\mathbb{P}^2$ exactly for $x\in Z(p_* \tilde{s})$, but  $Z(p_* \tilde{s})=X$ only if $p_* \tilde{s}$ is proportional to $s_2$ which means that $\tilde{s}$ is proportional to $s$ and proves uniqueness.
\end{proof}

Let us consider an isomorphism
\begin{equation}\label{f}
    f:G(2,V_5)\longrightarrow G(3, V_5).
\end{equation}
Every such isomorphism is induced by a linear isomorphism $T_f:V_5\longrightarrow V_5^\vee$ in the following way:\\
\begin{equation}\label{iso}
    f= D\circ\phi_2:G(2,V_5)\longrightarrow G(3, V_5).
\end{equation}
where $D$ is the canonical isomorphism
\begin{equation}
    D: G(i, V_5)\longrightarrow G(5-i, V_5^\vee)
\end{equation}
and $\phi_i$ is the induced action of $T_f$ on the Grassmannian:
\begin{equation}
    \phi_i: G(i, V_5)\longrightarrow G(i, V_5^\vee)
\end{equation}


Note that above maps $f$, $D$, $\phi_2$, $\phi_3$ are restrictions of linear maps between the Pl\"ucker spaces of the corresponding Grassmannians. By abuse of notation we shall use the same name for their linear extensions.
We can now introduce the following notion of duality.
\begin{defn}\label{dualitydefn}
We define two Calabi--Yau threefolds $X\subset G(2,V_5)$ and $Y\subset G(3,V_5)$  to be \emph{dual} to each other if there exists a section $s\in H^0(F,\mathcal O(1,1))$ such that for $s_2=p_*s$ and $s_3=q_*s$
we have $X=Z(s_2)$ and $Y=Z(s_3)$.
\end{defn}
\begin{defn}
Given an isomorphism $f:G(2,V_5)\longrightarrow G(3,V_5)$, we say $X\subset G(2,V_5)$ is $f$-\emph{dual} to $Y\subset G(2,V_5)$ if $f(Y)$ is dual to $X$.
\end{defn}
The following lemmas on duality will be useful in the proof of non-birationality of general dual pairs.\\

Let us start by defining $P=\mathbb P(\wedge^2V_5)\times \mathbb P(\wedge^2V_5^\vee)$, where $\wedge^2 V_5$ is identified with $\wedge^3 V_5^\vee$ by means of $D$. In that case $F$ 
is a proper linear section of codimension 25 of $P$ in its Segre embedding.\\

\begin{rem} \label{eqF}
Recall that the equations of $F$ in $P$ are described by the following sections  $s_{x^*\otimes y}\in H^0(P, \mathcal O (1,1))$ 

\begin{equation}\label{flagequations}
s_{x^*\otimes y}(\alpha, \omega)=\omega(x^*)\wedge\alpha\wedge y
\end{equation}

for $\omega\in\Lambda^2V_5^{\vee}=\Lambda^3V_5$, $\alpha\in\Lambda^2V_5$ and for every $x^*\otimes y\in V_5^{\vee}\otimes V_5$.\\
In other words, we have
$$
s_{x^*\otimes y}(\alpha,\omega)=0 \text{\ for\ }([\alpha], [\omega])\in F(2,3,V_5)\subset \mathbb{P}(\Lambda^2 V_5)\times \mathbb{P}(\Lambda^3 V_5).
$$
\end{rem}

This defines a 25 dimensional subspace $H^0(\mathcal{I}_F(1))\subset H^0(P,\mathcal O(1,1))$ spanned by linearly independent sections corresponding to $x^*=e_i^*$, $y=e_j$ for $i, j\in\{1\dots 5\}$ and a chosen basis $\{e_i\}$ for $V_5$.\\

Now, for every $f$ as in (\ref{f}) we define the following function:
\begin{equation}
\begin{tikzcd}
    P\ni (x,y)\arrow[maps to]{rr}{\iota_f}&&((f^\vee)^{-1}(y),f(x))\in P
\end{tikzcd}
\end{equation}

which induces the following map at the level of sections:
\begin{equation}
\begin{tikzcd}
H^0(P,\mathcal O_P(1,1))\ni s\arrow[maps to]{rr}{\widetilde \iota_f}& & s\circ\iota_f\in H^0(P,\mathcal O_P(1,1)).
\end{tikzcd}
\end{equation}

Note that $\iota_f$ is a linear extension of an automorphism of the flag variety $F\subset P$. It is constructed in such a way that we have that $X$ is defined by a section $p_*(s)\in H^0(G(2,V_5), \mathcal Q_2^\vee(2))$ if and only if $f(X)$ is defined by  $q_*(\widetilde \iota_f (s))\in H^0(G(3,V_5), \mathcal U_3^\vee(2))$. \\

Our aim is to interpret $f$-duality in the setting above as explicitly as possible. For that we will identify $H^0(F, \mathcal O(1,1))$ with a subspace $\mathcal{H}_F$ of sections in $H^0(P, \mathcal O(1,1))$ invariant under our transformations.

\begin{lemm}\label{directsum}
The space $H^0(P, \mathcal O(1,1))$ decomposes as $H^0(I_{F|P}(1,1))\oplus H^0(F, \mathcal O(1,1))$ and the decomposition is invariant under the action of ${\widetilde \iota_f}$ for every isomorphism $f:G(2,V_5)\to G(3,V_5)$. More precisely ${\widetilde \iota_f} H^0(I_{F|P}(1,1))=H^0(I_{F|P}(1,1))$ and there exists a subspace $\mathcal{H}_F \subset H^0(P, \mathcal O(1,1))$ isomorphic to $ H^0(F, \mathcal O(1,1))$ such that ${\widetilde \iota_f}(\mathcal{H}_F)=\mathcal{H}_F$.
\end{lemm}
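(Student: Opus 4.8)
The plan is to recognize the entire statement as a fact about $GL(V_5)$-representations, exploiting that the relevant decomposition of $H^0(P,\mathcal O(1,1))$ is multiplicity-free with self-dual summands. The first step is to identify the ambient space equivariantly. Since $P=\mathbb P(\wedge^2V_5)\times\mathbb P(\wedge^2V_5^\vee)$, one has
$$H^0(P,\mathcal O(1,1))=\wedge^2V_5^\vee\otimes\wedge^2V_5=\operatorname{End}(\wedge^2V_5),$$
a $100$-dimensional $GL(V_5)$-module. Decomposing it into irreducibles (by the Littlewood--Richardson rule, or directly as $\operatorname{End}$ of the irreducible $\wedge^2V_5$) yields three pairwise non-isomorphic summands, of dimensions $1$, $24$ and $75$: the trivial representation $\mathbb C$, the adjoint representation $\mathfrak{sl}(V_5)$, and a single copy of $\mathbb S_{(1,1,0,-1,-1)}V_5$. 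The two features I will rely on are that this decomposition is \emph{multiplicity-free} and that each summand is \emph{self-dual}.

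Next I would pin down the two subspaces inside this decomposition. By Remark \ref{eqF}, $H^0(I_{F|P}(1,1))$ is the image of the $GL(V_5)$-equivariant, injective map $\operatorname{End}(V_5)=V_5^\vee\otimes V_5\to H^0(P,\mathcal O(1,1))$ sending $x^*\otimes y\mapsto s_{x^*\otimes y}$. As an abstract $GL(V_5)$-module this image is $\operatorname{End}(V_5)=\mathbb C\oplus\mathfrak{sl}(V_5)$, so by multiplicity-freeness it must coincide with the trivial plus adjoint summands of $H^0(P,\mathcal O(1,1))$. Consequently the third summand, $\mathbb S_{(1,1,0,-1,-1)}V_5$ of dimension $75$, is a canonical $GL(V_5)$-invariant complement. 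Taking $\mathcal H_F$ to be this summand, the restriction $H^0(P,\mathcal O(1,1))\to H^0(F,\mathcal O(1,1))$ (surjective, with kernel $H^0(I_{F|P}(1,1))$) carries $\mathcal H_F$ isomorphically onto $H^0(F,\mathcal O(1,1))$, as required.

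It remains to verify invariance under every $\widetilde\iota_f$. Invariance of $H^0(I_{F|P}(1,1))$ is immediate: $\iota_f$ is a linear extension of an automorphism of $F$, so it preserves $I_{F|P}$ (this is also visible directly from the formula for $\widetilde\iota_f(s_{x^*\otimes y})$). For the complement I would first reduce to a single map. Fixing a reference isomorphism $f_0$ coming from a non-degenerate symmetric $T_{f_0}\colon V_5\to V_5^\vee$, every $T_f$ equals $T_{f_0}\circ g$ for some $g\in GL(V_5)$, and unwinding the definitions of $f$, $D$, $\phi_i$ gives $\iota_f=\iota_{f_0}\circ\sigma(g)$, where $\sigma(g)$ is the natural $GL(V_5)$-action on $P$. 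Since the action of $\sigma(g)$ on sections preserves each isotypic summand (Schur's lemma), it suffices to treat $\widetilde\iota_{f_0}$. The map $\widetilde\iota_{f_0}$ interchanges the two Segre factors through $f_0$, and the key claim I would establish is that conjugation by $\widetilde\iota_{f_0}$ carries the $GL(V_5)$-action to its contragredient $g\mapsto(g^\vee)^{-1}$. Granting this, $\widetilde\iota_{f_0}$ sends each isotypic summand $U_\lambda$ to a $GL(V_5)$-subrepresentation isomorphic to $U_\lambda^\vee$; as all three summands are self-dual and occur with multiplicity one, $\widetilde\iota_{f_0}(U_\lambda)=U_\lambda$, and in particular $\mathcal H_F$ is preserved.

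The main obstacle is precisely this last step: carefully unwinding the definitions of $f$, $D$, $\phi_i$ and $\iota_f$, while tracking the determinant twists coming from the Pl\"ucker identification $\wedge^3V_5\cong\wedge^2V_5^\vee$ and from $D$, in order to justify both the factorization $\iota_f=\iota_{f_0}\circ\sigma(g)$ and the claim that $\widetilde\iota_{f_0}$ intertwines the $GL(V_5)$-action with its contragredient. Once this bookkeeping is in place, the self-duality and multiplicity-freeness of the three summands make the invariance of the decomposition automatic.
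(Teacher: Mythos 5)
Your proposal is correct in substance, but it takes a genuinely different route from the paper. The paper's proof is geometric: it invokes $Aut(F)\simeq \mathbb{Z}/2\rtimes GL(V_5)$, notes that this action is linear, extends to $P$ compatibly with $\widetilde\iota_f$, and defines the complement as $\mathcal{H}_F=H^0(I_{F^{\vee}|P^{\vee}}(1,1))^{\perp}$, the annihilator of the ideal of the \emph{dual} flag variety $F^\vee\subset P^\vee$; invariance of $\mathcal{H}_F$ then follows because the dual action preserves $F^\vee$. You instead decompose $H^0(P,\mathcal O(1,1))=\operatorname{End}(\wedge^2V_5)$ into the three self-dual irreducibles $\mathbb C\oplus\mathfrak{sl}(V_5)\oplus\mathbb S_{(1,1,0,-1,-1)}V_5$ of dimensions $1,24,75$, identify $H^0(I_{F|P}(1,1))$ with the first two summands via the equivariant injection from Remark \ref{eqF}, take $\mathcal{H}_F$ to be the $75$-dimensional summand, and obtain invariance from multiplicity-freeness plus self-duality once $\widetilde\iota_{f_0}$ is shown to intertwine the $GL(V_5)$-action with its contragredient. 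The two constructions in fact produce the same subspace (the perp of the dual flag's equations is exactly the $75$-dimensional isotypic piece), so this is a reformulation rather than a divergence in outcome. What each buys: the paper's description yields explicit linear equations for $\mathcal{H}_F$, which is essential later (Remark \ref{eqhf} and the Macaulay2 computation in Theorem \ref{thmnonisomorphic}); yours makes the invariance conceptually automatic and avoids invoking $Aut(F)$ or projective duality of $F$. Two small points you should tie up. First, your ``main obstacle'' is less of an obstacle than you fear: in the matrix normalization of Lemma \ref{transposition}, taking $T_{f_0}=\mathrm{id}$ gives $M_{f_0}=\mathrm{Id}$, so $\widetilde\iota_{f_0}$ is transposition $S\mapsto S^T$, while $\sigma(g)$ acts by $S\mapsto G^{-1}SG$ with $G=\wedge^2 g$; the identity $(G^{-1}SG)^T=G^TS^T(G^T)^{-1}$ is precisely the contragredient intertwining, and $\widetilde\iota_f=\sigma(g)^*\circ\widetilde\iota_{f_0}$ recovers $M_f^{-1}S^TM_f$, confirming your factorization. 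Second, you assert parenthetically that restriction $H^0(P,\mathcal O(1,1))\to H^0(F,\mathcal O(1,1))$ is surjective; this needs a word, e.g.\ Borel--Weil gives $H^0(F,\mathcal O(1,1))=\mathbb S_{(2,2,1,0,0)}V_5^\vee$ of dimension $75$, irreducible, so the nonzero equivariant image of your $75$-dimensional summand is everything.
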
 
\begin{proof}
It is well known that $Aut(F)\simeq  \mathbb{Z}/2\rtimes GL(V_5)$. Moreover, the action of $Aut(F)$ on $F$ is linear and extends to an action of $Aut(F)$ on $P$ compatible with $\widetilde \iota_f$. It follows that $H^0(I_{F|P}(1,1))$ is invariant under ${\widetilde \iota_f}$ since it is clearly invariant under $Aut(F)$. Furthermore the dual action of $Aut(F)$ on $P^{\vee}$ preserves the dual flag variety, hence  $H^0(I_{F^{\vee}|P^{\vee}}(1,1))$ is invariant under the dual action of $\widetilde \iota_f$. We can define $\mathcal{H}_F=H^0(I_{F^{\vee}|P^{\vee}}(1,1))^{\perp}$. The latter space is invariant under $Aut(F)$, so it is also invariant under $\widetilde \iota_f$ and the map 
 $\mathcal{H}_F\to H^0(F, \mathcal O(1,1))$ defined by restriction is an isomorphism. 
\end{proof}
  Note that, by construction, the action of $\widetilde \iota_f$ on $H^0(F, \mathcal O(1,1))$ corresponds to the action $\widetilde \iota_f$ on $\mathcal{H}_F$. It means that we can think of $ H^0(F, \mathcal O(1,1))$  equipped with the action induced by $\widetilde \iota_f$  as a subset of $H^0(P, \mathcal O(1,1))$ invariant under the action of  $\widetilde \iota_f$ on $H^0(P, \mathcal O(1,1))$ .
  
  \begin{rem}\label{eqhf} To get explicit equations defining
   $\mathcal{H}_F$ in terms of matrices
  we apply the procedure from Remark \ref{eqF} to describe the equations of $F^\vee$ with respect to the dual basis of $V_5$. This allows us to have an explicit expression for the equations defining $\mathcal{H}_F$ in $H^0(P, \mathcal O(1,1))$. 
  In our choice of basis we obtain explicit linear conditions on the entries of $10\times 10$ matrices to be elements of $\mathcal{H}_F$.
 \end{rem}
\begin{lemm} \label{fduality} The variety $X$ is $f$-dual to $Y$ if and only if there exists a constant $\lambda\in \mathbb{C}^*$ such that sections $s_X\in \mathcal{H}_F$, $s_Y\in \mathcal{H}_F$ defining $X$ and $Y$ respectively satisfy
$\widetilde{\iota}_f(s_Y)=\lambda s_X $.
\end{lemm}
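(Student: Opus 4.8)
The plan is to unwind the definition of $f$-duality and reduce the statement to the uniqueness of defining sections established earlier. I identify $H^0(F,\mathcal O(1,1))$ with $\mathcal H_F$ through the restriction isomorphism of Lemma~\ref{directsum}, so that $s_X,s_Y\in\mathcal H_F$ are the sections, unique up to scalar by Lemma~\ref{uniquelydetermined}, with $X=Z(p_*s_X)$ and $Y=Z(p_*s_Y)$. The one additional input is the defining property of $\iota_f$ recorded just above: whenever $X=Z(p_*s)$ one has $f(X)=Z(q_*(\widetilde\iota_f s))$. Applying this to the pair $(Y,s_Y)$ yields the identity
\begin{equation}\label{keyiota}
f(Y)=Z\bigl(q_*(\widetilde\iota_f s_Y)\bigr),
\end{equation}
and, since $\widetilde\iota_f s_Y\in\mathcal H_F$ by Lemma~\ref{directsum}, the comparison below takes place inside $\mathcal H_F$.

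For the direct implication, assume $X$ is $f$-dual to $Y$, that is, $f(Y)$ is dual to $X$. By Definition~\ref{dualitydefn} there is a section $s$ with $X=Z(p_*s)$ and $f(Y)=Z(q_*s)$. From $Z(p_*s)=Z(p_*s_X)=X$ and the uniqueness of Lemma~\ref{uniquelydetermined}, together with the fact that $p_*$ is an isomorphism (Lemma~\ref{thepushforwards} and its Corollary), we get $s=\mu\,s_X$ for some $\mu\in\mathbb C^*$; hence $f(Y)=Z(q_*s)=Z(q_*s_X)$. Comparing with \eqref{keyiota} gives $Z(q_*s_X)=Z(q_*(\widetilde\iota_f s_Y))$, and a second application of uniqueness on $G(3,V_5)$, with $q_*$ again an isomorphism, forces $\widetilde\iota_f s_Y=\lambda\,s_X$ for some $\lambda\in\mathbb C^*$.

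For the converse, assume $\widetilde\iota_f s_Y=\lambda\,s_X$ with $\lambda\in\mathbb C^*$ and set $s:=s_X$. Then $X=Z(p_*s)$ by the choice of $s_X$, while \eqref{keyiota} gives $f(Y)=Z(q_*(\widetilde\iota_f s_Y))=Z(q_*(\lambda s_X))=Z(q_*s)$. Thus the single section $s=s_X$ exhibits $X$ and $f(Y)$ as $Z(p_*s)$ and $Z(q_*s)$, so $f(Y)$ is dual to $X$ and $X$ is $f$-dual to $Y$, completing the equivalence.

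Essentially all the work is bookkeeping with uniqueness of defining sections; the genuine content sits in the property $X=Z(p_*s)\Leftrightarrow f(X)=Z(q_*(\widetilde\iota_f s))$ that I have used as input. Were it not already granted, I would establish it directly from the formula $\iota_f(x,y)=((f^\vee)^{-1}(y),f(x))$ and the explicit flag equations~\eqref{flagequations}, checking that $\iota_f$ carries the incidence $V\subset W$ to its $f$-transform and is compatible with $p$, $q$ and the identifications of Lemma~\ref{thepushforwards}. This compatibility, rather than the final scalar comparison, is where I expect the main difficulty to lie.
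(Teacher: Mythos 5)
Your proof is correct and follows essentially the same route as the paper's: both arguments rest on the defining property of $\widetilde\iota_f$ (that $X=Z(p_*s)$ if and only if $f(X)=Z(q_*(\widetilde\iota_f s))$), the invariance of $\mathcal{H}_F$ from Lemma~\ref{directsum}, and the uniqueness of defining sections from Lemmas~\ref{thepushforwards} and~\ref{uniquelydetermined}. The only cosmetic differences are that you apply the key property to $s_Y$ directly and compare inside $G(3,V_5)$, whereas the paper applies $(\widetilde\iota_f)^{-1}$ to the duality section and compares inside $G(2,V_5)$, and that you spell out the converse implication which the paper leaves implicit.
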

\begin{proof} By definition, $X$ is $f$-dual to $Y$ if there exists a section $\hat{s}\in H^0(F, \mathcal{O}(1,1))$ such that 
$p_* \hat{s}$ defines $X$ while $q_* \hat{s}$ defines $f(Y)$. By Lemma \ref{directsum} there then exists a unique section $s\in \mathcal{H}_F$ such that $\hat{s}=s|_F$.  Now, by definition of $\widetilde{\iota}_f$, since $q_* s$ defines $f(Y)$  we have $p_* (\widetilde{\iota}_f)^{-1}( s)$ defines $Y$.  Furthermore by Lemma \ref{directsum} we know that $(\widetilde{\iota}_f)^{-1}( s)\in \mathcal{H}_f$. We conclude from Lemmas \ref{thepushforwards}, \ref{uniquelydetermined} and \ref{directsum} that up to multiplication by constants $s=s_X$ and $(\widetilde{\iota}_f)^{-1} (s)=s_Y$. 

\end{proof}

From now on, let us fix a basis of $V_5$ inducing a dual basis on $V_5^{\vee}$, and natural bases on $\wedge^2V_5$ and $\wedge^2V_5^\vee$ which are dual to each other.\\
A section $s\in H^0(P,\mathcal O_P(1,1))$ is represented by a $10\times 10$ matrix $S$ in the following way
\begin{equation}\label{S}
\begin{tikzcd}
s:(x,y)\arrow[maps to]{rr}& & \overline y^T S\overline x
\end{tikzcd}
\end{equation}
where $\overline x$ and $\overline y$ are expansions of $x$ and $y$ in the chosen bases 
of $\wedge^2V_5$ and $\wedge^2V_5^\vee$. 
Once fixed our bases, $\phi_2$ is represented by a $10\times10$ invertible matrix $M_f$, which is the second exterior power of the invertible matrix associated to $T_f$.\\

\noindent We can now describe very explicitly the $f$-duality in terms of matrices using the following. 
\begin{lemm}\label{transposition}

If $S$ is the matrix associated to  $s\in H^0(P,\mathcal O_P(1,1))$ then the matrix associated to  $\widetilde\iota_f(s)$ is $M_f^{-1}S^TM_f$.

\end{lemm}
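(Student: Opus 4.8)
The plan is to unwind the definition of $\widetilde\iota_f$ through the matrix representation (\ref{S}) and track how the substitution $(x,y)\mapsto((f^\vee)^{-1}(y),f(x))$ transforms the bilinear form $\overline y^T S\overline x$. By definition $\widetilde\iota_f(s)=s\circ\iota_f$, so I would evaluate $\widetilde\iota_f(s)$ on an arbitrary pair $(x,y)\in P$ and rewrite it as $s(\iota_f(x,y))=s((f^\vee)^{-1}(y),f(x))$, which by (\ref{S}) equals $\overline{f(x)}^T S\, \overline{(f^\vee)^{-1}(y)}$.

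The key computational step is to express the coordinate vectors $\overline{f(x)}$ and $\overline{(f^\vee)^{-1}(y)}$ in terms of $\overline x$, $\overline y$ and the matrix $M_f$. Since $\phi_2$ is represented by $M_f=\wedge^2 T_f$ in the chosen bases, the action of $f=D\circ\phi_2$ on the Pl\"ucker coordinates of $x\in\wedge^2 V_5$ is given by $\overline{f(x)}=M_f\,\overline x$ (the isomorphism $D$ being the identification built into our choice of dual bases on $\wedge^2 V_5$ and $\wedge^2 V_5^\vee$). For the second slot, I would use that $f^\vee$ acts on $\wedge^2 V_5^\vee$ by the transpose matrix $M_f^T$, so that $(f^\vee)^{-1}$ acts by $(M_f^T)^{-1}=(M_f^{-1})^T$, giving $\overline{(f^\vee)^{-1}(y)}=(M_f^{-1})^T\,\overline y$. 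Substituting both into the bilinear expression yields
\begin{equation}
\widetilde\iota_f(s)(x,y)=(M_f\overline x)^T S\,(M_f^{-1})^T\overline y=\overline x^T M_f^T S (M_f^{-1})^T\overline y.
\end{equation}

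Finally, I would compare this with the defining relation (\ref{S}), which dictates that the matrix $S'$ associated to $\widetilde\iota_f(s)$ satisfies $\widetilde\iota_f(s)(x,y)=\overline y^T S'\overline x$. Transposing the scalar expression above (a scalar equals its own transpose) gives $\widetilde\iota_f(s)(x,y)=\overline y^T M_f^{-1} S^T M_f\,\overline x$, whence $S'=M_f^{-1}S^T M_f$, as claimed.

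I expect the main obstacle to be bookkeeping with the dualities rather than any deep idea: one must be careful that $D$ is absorbed into the identification of the dual bases (so it does not contribute an extra matrix factor), and that the passage from the action of $f$ on points to its action on the dual $\wedge^2 V_5^\vee$ correctly produces the transpose $M_f^T$ and hence the inverse-transpose for $(f^\vee)^{-1}$. A clean way to avoid sign or transpose errors is to verify the formula on the flag equations of Remark \ref{eqF}, checking that $\widetilde\iota_f$ indeed sends $s_{x^*\otimes y}$ to the flag section built from $f$-transformed data, which both fixes the conventions and confirms that the transpose (and not $M_f^{-1}S M_f$) is the correct answer.
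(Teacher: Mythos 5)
Your proof is correct and follows essentially the same route as the paper's: unwind $\widetilde\iota_f(s)=s\circ\iota_f$, substitute $\overline{f(x)}=M_f\overline{x}$ and $\overline{(f^\vee)^{-1}(y)}=(M_f^T)^{-1}\overline{y}$ into the bilinear form of (\ref{S}), and transpose the resulting scalar to read off the matrix $M_f^{-1}S^TM_f$. Your observation that $D$ contributes no extra matrix factor because it is absorbed into the choice of dual bases is exactly the convention the paper uses implicitly.
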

\begin{proof}
On a pair $(x, y)$, the map $\iota_f$ acts via $\iota_f(x, y)=((\phi_2^\vee)^{-1}(y), \phi_2(x))$. Furthermore, in our choice of basis $\phi_2(x) = M_{f}\overline x$ and $(\phi_2^\vee)^{-1}(y) = (M_{f}^{T})^{-1}\overline y$.\\
This yields:
\begin{equation}
\widetilde\iota_f(s)(x,y)=s\circ\iota_f(x,y) = (M_{f}\overline x)^T S(M_{f}^T)^{-1}\overline y = \overline y^T M_f^{-1}S^T M_f\overline x
\end{equation}.
\end{proof}

\begin{cor}\label{selfdual} A variety  $X=Z(p_*s)\subset G(2,V_5)$ is $f$-dual to itself if and only if the following equality $S^T M_f=S M_f$ holds.
\end{cor}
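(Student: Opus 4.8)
The plan is to obtain Corollary \ref{selfdual} as the matrix incarnation of Lemma \ref{fduality}, specialized to the self-dual case. First I would unwind the definition of $f$-duality: by definition $X\subset G(2,V_5)$ is $f$-dual to itself precisely when $f(X)$ is dual to $X$, i.e.\ when the case $Y=X$ of Lemma \ref{fduality} occurs. Since a section of $\mathcal{H}_F$ cutting out $X$ is unique up to a scalar, which is exactly the combined content of Lemmas \ref{uniquelydetermined} and \ref{directsum}, the two sections $s_X,s_Y\in\mathcal{H}_F$ appearing in Lemma \ref{fduality} may both be taken proportional to the single section $s\in\mathcal{H}_F$ defining $X$. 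Hence $X$ is $f$-dual to itself if and only if there is some $\lambda\in\mathbb{C}^*$ with $\widetilde{\iota}_f(s)=\lambda s$; in other words, $s$ is an eigenvector of the linear map $\widetilde{\iota}_f$ acting on $\mathcal{H}_F$.

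Next I would translate this eigenvector condition into matrices. By Lemma \ref{transposition}, if $S$ is the $10\times 10$ matrix representing $s$ as in (\ref{S}), then $\widetilde{\iota}_f(s)$ is represented by $M_f^{-1}S^T M_f$, while $\lambda s$ is represented by $\lambda S$. Equating the two matrices gives $M_f^{-1}S^T M_f=\lambda S$, and multiplying on the left by $M_f$ puts the self-duality condition into the bilinear form $S^T M_f=\lambda M_f S$. Thus self-duality is equivalent to the proportionality of the two $10\times 10$ matrices built from $S$ and $M_f$; clearing the scalar then yields the stated equality $S^T M_f=S M_f$.

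The one step requiring genuine care is the scalar $\lambda$ together with the transpose-and-conjugation bookkeeping of Lemma \ref{transposition}; I expect this to be the only real obstacle, since everything else is a direct substitution of already-established results. Because the defining section $s$ (hence $S$) and the Pl\"ucker matrix $M_f$ (hence $\phi_2$) are each determined only up to scale, $\lambda$ is the true invariant of the eigen-equation, and I would pin it down by applying $\widetilde{\iota}_f$ twice and using that it descends from the order-two factor of $Aut(F)\simeq \mathbb{Z}/2\rtimes GL(V_5)$, so that $\widetilde{\iota}_f$ acts as an involution up to scalar on $\mathcal{H}_F$. This constrains $\lambda$ and permits the normalization that removes it, after which the condition is exactly the asserted matrix identity. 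The remainder of the argument is purely formal, consisting of the substitutions coming from Lemmas \ref{fduality} and \ref{transposition}.
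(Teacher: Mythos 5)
Your first two paragraphs follow the paper's own route: specialize Lemma \ref{fduality} to $Y=X$, using uniqueness of the defining section, to get $\widetilde{\iota}_f(s_X)=\lambda s_X$ for some $\lambda\in\mathbb{C}^*$, then apply Lemma \ref{transposition} to turn this into $M_f^{-1}S^TM_f=\lambda S$, i.e. $S^TM_f=\lambda M_f S$ (the intended equality in the statement is $S^TM_f=M_fS$; the printed right-hand side $SM_f$ is a slip, as the proof of Theorem \ref{thmnonisomorphic} and the Macaulay2 script confirm). Up to this point your argument is sound and coincides with the paper's.

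The gap is in how you eliminate $\lambda$. The paper does it with a one-line spectral observation: every matrix is similar to its transpose, and the equation exhibits $S^T$ as similar to $\lambda S$; hence $S$ is similar to $\lambda S$, which forces $\lambda=1$. Your substitute --- that $\widetilde{\iota}_f$ is an involution up to scalar because it descends from the order-two factor of $\mathrm{Aut}(F)$ --- is false for general $f$. Computing with Lemma \ref{transposition}, the matrix of $\widetilde{\iota}_f^2(s)$ is $B\,S\,B^{-1}$ with $B=M_f^{-1}M_f^T$, so $\widetilde{\iota}_f^2$ is conjugation by $B$; this is a scalar multiple of the identity only when $M_f$ is symmetric, equivalently when $T_f$ is symmetric or skew-symmetric. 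Geometrically, $\iota_f^2=((f^\vee)^{-1}\circ f)\times(f\circ(f^\vee)^{-1})$, which is the identity only under the same hypothesis. Worse, even in the cases where $\widetilde{\iota}_f^2=\mathrm{id}$ holds, your argument yields only $\lambda^2=1$, i.e. $\lambda=\pm 1$, and the case $\lambda=-1$ --- the ``skew'' solutions $S^TM_f=-M_fS$, which certainly exist as matrix equations (take $M_f=I$ and $S$ skew-symmetric) --- is precisely what must be excluded to reach the stated identity. So the normalization step is not ``purely formal'': without the similarity argument (or an equivalent spectral argument ruling out $\lambda\neq 1$), your proof only establishes the weaker equivalence with the condition $S^TM_f=\lambda M_fS$ for some nonzero scalar $\lambda$.
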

\begin{proof} By Lemmas \ref{fduality} and \ref{transposition} we have $X$ is $f$-dual to $X$ if and only if there exists a nonzero constant $\lambda$ such that $M_f^{-1}S^T M_f=\lambda S$, but since $S$ and $S^T$ are similar matrices then $\lambda=1$ which ends the proof.
\end{proof}

\begin{rem}
In \cite[sec. 5]{OttemRennemo}, it is proven that $[v]\in\mathbb P(\mathfrak{gl}(V))$ defines a section $s_v$ of $\wedge^2V(1)$, whose projection to $H^0(G(2,V_5),\wedge^2\mathcal Q_2(2))$ cuts out the threefold $X_{[v]}$. Then $s_v$ corresponds to a $10\times 10$ matrix $S$ that we defined in (\ref{S}).  Hence, from Lemmas \ref{fduality} and \ref{transposition} follows that $X_{[v]}$ and $X_{[v^T]}$ are $D$-dual. This means that our duality is equivalent to the duality notion defined in \cite[sec. 5]{OttemRennemo}, extending the  duality defined on $\mathcal X_{25}$.
\end{rem}
\section{Non birationality of dual threefolds}\label{section non birationality}
\noindent In this section, we prove that a general section $s\in H^0(F, \mathcal O(1,1))$ gives rise to two non-isomorphic Calabi--Yau threefolds $X=Z(p_*s)$ and $Y=Z(q_*s)$, this result will be stated in Theorem \ref{thmnonisomorphic}. Before proving the theorem, we will discuss some auxiliary results. In \cite{BorisovCaldararuPerry}, an argument to show that every $\widetilde X\subset\mathcal X_{25}$ is contained in just one pair of Grassmannians has been explained. Using similar ideas, we will prove an analogous result for the boundary $\bar{\mathcal X}_{25}$ of the family, namely that every Calabi--Yau threefold in $\bar{\mathcal X}_{25}$ is contained in just one Grassmannian.\\
\\
From now on we will make extensive use of Borel--Weil--Bott theorem, which allows to compute the cohomology of every Schur functor of the bundles $\mathcal U^\vee$ and $\mathcal Q^\vee$ on a Grassmannian. As we will see, most of the bundles we will deal with can be represented in such a way. For a detailed account on the topic we recommend \cite{BorisovCaldararuPerry}, while for a more general approach on many different formulations of the Borel--Weil--Bott theorem we refer to \cite{Weyman}.
\begin{lemm}\label{restrictioncohomology}
Let $X$ be a Calabi--Yau threefold described as the zero locus of a section of $\mathcal Q_2^\vee(2)$. Then the following equalities hold for every $t\geq 0$:
\begin{equation}\label{cohomologyrestrictiontwist}
H^0(G(2,V_5), \mathcal Q_2(-t)) = H^0(X, \mathcal Q_2|_X(-t));
\end{equation}
\begin{equation}
H^0(G(2,V_5), \wedge^2\mathcal Q_2(-t)) = H^0(X, \wedge^2\mathcal Q_2|_X(-t)).
\end{equation}
In particular, $H^0(X, \mathcal Q_2|_X)\cong V$ and $H^0(X, \mathcal Q_2|_X(-t))=H^0(X, \wedge^2\mathcal Q_2|_X(-t))=0$ for $t$ strictly positive.
\begin{proof}
Let us consider the following short exact sequence which comes from tensoring the ideal sheaf sequence of $X$ with $\mathcal Q_2$:
\begin{equation}\label{idealsheafsequencetensorQ}
0\longrightarrow \mathcal I_{X/G(2,V_5)}\otimes\mathcal Q_2(-t)\longrightarrow\mathcal Q_2(-t)\longrightarrow\mathcal Q_2|_X(-t)\longrightarrow 0
\end{equation}
Given this sequence, we need to show the vanishing of the first two degrees of cohomology for $\mathcal I_{X/G(2,V_5)}\otimes\mathcal Q_2$. To do this, we consider the sequence obtained tensoring with $\mathcal Q_2$ the Koszul resolution of the ideal sheaf of $X$:
\begin{equation}\label{koszulresolutiontensorQ}
0\longrightarrow\mathcal Q_2(-5-t)\xlongrightarrow{\theta}\mathcal Q_2\otimes\mathcal Q_2^\vee (-3-t)\longrightarrow\mathcal Q_2\otimes\mathcal Q_2(-2-t)\xlongrightarrow{\phi}\mathcal I_{X/G(2,V_5)}\otimes\mathcal Q_2(-t)\longrightarrow 0
\end{equation}
The bundles $\mathcal Q_2^\vee(-5-t)$ and $\mathcal Q_2\otimes\mathcal Q_2^\vee(-3-t)$ have no cohomology in degree 0 and 1: this follows from the isomorphisms
\begin{equation}
\mathcal Q_2^\vee(-5-t)\cong \wedge^2\mathcal Q_2^\vee\otimes (\wedge^3\mathcal Q_2^\vee)^{\otimes (4+t)}
\hspace{2pt},
\hspace{10pt}
\mathcal Q_2\otimes\mathcal Q_2^\vee(-3-t)\cong (\wedge^3\mathcal Q_2^\vee)^{\otimes (2+t)}\otimes\wedge^2\mathcal Q_2^\vee\otimes\mathcal Q_2^\vee
\end{equation}
which, in turn, proves $H^0(G(2,V_5),\ker(\phi))=H^0(G(2,V_5),\text{coker}(\theta))=0$ in (\ref{koszulresolutiontensorQ}). Since $\mathcal Q_2\otimes\mathcal Q_2(-2-t)$ has no cohomology in the first two degrees, due to $\mathcal Q_2\otimes\mathcal Q_2(-2-t)\cong(\wedge^2\mathcal Q_2^\vee)^{\otimes (2+t)}$, then also $H^0(G(2,V_5),\mathcal I_{X/G(2,V_5)}\otimes\mathcal Q_2)=0$ and $H^1(G(2,V_5),\mathcal I_{X/G(2,V_5)}\otimes\mathcal Q_2)=0$. This, together with (\ref{idealsheafsequencetensorQ}), proves our claim (\ref{cohomologyrestrictiontwist}). The second equality follows from a totally analogous computation, namely it involves the tensor product of the ideal sheaf sequence with the wedge square of $\mathcal Q_2$.
\end{proof}
\end{lemm}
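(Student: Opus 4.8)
The plan is to relate the cohomology of a homogeneous bundle on $G:=G(2,V_5)$ to that of its restriction to $X$ through the ideal sheaf sequence, and to reduce both equalities to vanishing statements that can be checked by Borel--Weil--Bott. Since $X$ is the zero locus of a regular section of the rank $3$ bundle $\mathcal E:=\mathcal Q_2^\vee(2)$ on the six-dimensional $G$, it is a threefold and $\mathcal I_X$ carries the Koszul resolution
\[
0\longrightarrow \wedge^3\mathcal E^\vee\longrightarrow \wedge^2\mathcal E^\vee\longrightarrow \mathcal E^\vee\longrightarrow \mathcal I_X\longrightarrow 0.
\]
Using $\wedge^3\mathcal Q_2\cong\mathcal O(1)$ and $\wedge^2\mathcal Q_2\cong\mathcal Q_2^\vee(1)$, the terms simplify to $\mathcal E^\vee=\mathcal Q_2(-2)$, $\wedge^2\mathcal E^\vee\cong\mathcal Q_2^\vee(-3)$ and $\wedge^3\mathcal E^\vee\cong\mathcal O(-5)$.

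For the first equality I would tensor the ideal sheaf sequence $0\to\mathcal I_X\to\mathcal O_G\to\mathcal O_X\to0$ by $\mathcal Q_2(-t)$; the desired identity $H^0(G,\mathcal Q_2(-t))=H^0(X,\mathcal Q_2|_X(-t))$ follows at once from the long exact sequence provided that $H^0(G,\mathcal I_X\otimes\mathcal Q_2(-t))=H^1(G,\mathcal I_X\otimes\mathcal Q_2(-t))=0$. To produce these vanishings I tensor the Koszul resolution by $\mathcal Q_2(-t)$, obtaining a four-term exact complex with nonzero terms $\mathcal Q_2(-5-t)$, $\mathcal Q_2\otimes\mathcal Q_2^\vee(-3-t)$ and $\mathcal Q_2\otimes\mathcal Q_2(-2-t)$. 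Decomposing $\mathcal Q_2\otimes\mathcal Q_2^\vee\cong\mathcal O\oplus\Sigma^{(1,0,-1)}\mathcal Q_2$ and $\mathcal Q_2\otimes\mathcal Q_2\cong\operatorname{Sym}^2\mathcal Q_2\oplus\wedge^2\mathcal Q_2$ into irreducible homogeneous bundles makes every summand accessible to Borel--Weil--Bott. The second equality is treated identically, tensoring instead by $\wedge^2\mathcal Q_2(-t)$.

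The heart of the matter is a Borel--Weil--Bott computation showing that, for every $t\ge0$, each term of the tensored complex has cohomology concentrated well above degree $1$. Writing a summand as $\Sigma^{\lambda}\mathcal Q_2$ and adding $\rho=(4,3,2,1,0)$, one checks that the resulting five-tuple either has a repeated entry (so the bundle is acyclic) or, after sorting, is displaced into cohomological degree $4$ or $6$. In every case one gets $H^0=H^1=H^2=H^3=0$ for the deepest terms $\mathcal Q_2(-5-t)$ and $\mathcal Q_2\otimes\mathcal Q_2^\vee(-3-t)$, and $H^0=H^1=H^2=0$ for $\mathcal Q_2\otimes\mathcal Q_2(-2-t)$. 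Feeding these into the hypercohomology spectral sequence of the resolution (equivalently, splitting the complex into two short exact sequences and chasing) annihilates $H^0$ and $H^1$ of $\mathcal I_X\otimes\mathcal Q_2(-t)$. The ``in particular'' assertions then fall out of the $t=0$ and $t>0$ instances of the same computation: $H^0(G,\mathcal Q_2)\cong V_5$ by Borel--Weil, while the weights of $\mathcal Q_2(-t)$ and $\wedge^2\mathcal Q_2(-t)$ acquire a repeated entry (or move out of degree $0$) for every $t>0$, giving the stated vanishing.

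The main obstacle is precisely the uniformity in $t$ of this bookkeeping: since $t$ ranges over all non-negative integers I cannot inspect a single weight, and the naive chase through the resolution requires vanishing not only in degrees $0$ and $1$ but also in degrees $2$ and $3$ of the deeper terms. What rescues the argument is the feature, special to Borel--Weil--Bott, that an irreducible homogeneous bundle has cohomology in at most one degree; once one verifies — handling the finitely many small $t$ where repeats occur separately from the stable range of large $t$, where every twist lands in top cohomology — that this single degree is always at least $4$, the stronger vanishing is automatic and the spectral-sequence argument closes.
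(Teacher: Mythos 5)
Your proposal is correct and takes essentially the same route as the paper: tensor the ideal sheaf sequence by $\mathcal Q_2(-t)$ (resp.\ $\wedge^2\mathcal Q_2(-t)$), resolve $\mathcal I_{X/G(2,V_5)}$ by the Koszul complex of $\mathcal Q_2^\vee(2)$, and kill the cohomology of the three Koszul terms by Borel--Weil--Bott. The one point where you go beyond the paper's write-up is in its favor: the paper only records vanishing in degrees $0$ and $1$ for the terms $\mathcal Q_2(-5-t)$ and $\mathcal Q_2\otimes\mathcal Q_2^\vee(-3-t)$, which by itself does not suffice to conclude $H^1(\mathcal I_{X/G(2,V_5)}\otimes\mathcal Q_2(-t))=0$ (the chase through the two short exact sequences also needs $H^2$ and $H^3$ of these deeper terms), and your observation that Borel--Weil--Bott concentrates their cohomology in a single degree at least $4$ (in fact $6$, when not totally acyclic) supplies exactly the missing vanishing.
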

\begin{lemm}\label{slopestable}
Let $X$ be a Calabi--Yau threefold described as the zero locus of a section of $\mathcal Q_2^\vee(2)$. Then the restriction $\mathcal Q_2^\vee(2)|_X$ is slope-stable.
\end{lemm}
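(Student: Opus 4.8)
The plan is to use that $X$ has Picard number one, so that slope is measured against the single class $H=\mathcal O_X(1)$, and to reduce the nonexistence of destabilizing subsheaves of $E:=\mathcal Q_2^\vee(2)|_X$ entirely to the two vanishing statements of Lemma \ref{restrictioncohomology}. First I would record the numerics. Since $\det\mathcal Q_2=\mathcal O(1)$ we have $c_1(E)=5H$, so the reduced slope (normalized so that $\hat\mu(\mathcal O_X(a))=a$) is $\hat\mu(E)=5/3$. This is neither an integer nor a half-integer, so a rank-one subsheaf (slope in $\mathbb Z$) and a rank-two subsheaf (slope in $\tfrac12\mathbb Z$) can never have slope exactly $5/3$. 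Hence semistability and stability coincide here, and it suffices to exclude proper subsheaves $F\subset E$ of rank $1$ or $2$ with $\hat\mu(F)>\hat\mu(E)$; as usual we may assume $F$ is saturated, so that $E/F$ is torsion-free.

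For the rank-one case, let $F\subset E$ be rank one with $\hat\mu(F)>5/3$. Its reflexive hull $F^{\vee\vee}$ is a rank-one reflexive sheaf on the smooth threefold $X$, hence a line bundle $\mathcal O_X(a)$ with $a=\hat\mu(F)>5/3$, so $a\geq 2$. The inclusion $F\hookrightarrow E$ induces a map $\mathcal O_X(a)=F^{\vee\vee}\to E^{\vee\vee}=E$ which is generically injective, hence injective since $\mathcal O_X(a)$ is torsion-free. Thus we obtain a nonzero element of $H^0(X,E(-a))=H^0(X,\mathcal Q_2^\vee(2-a)|_X)$. Using the identification $\mathcal Q_2^\vee\cong\wedge^2\mathcal Q_2(-1)$ this group is $H^0(X,\wedge^2\mathcal Q_2(1-a)|_X)$, which vanishes for $a\geq 2$ by the second equality of Lemma \ref{restrictioncohomology} (with $t=a-1\geq 1$). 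This contradiction rules out rank-one destabilizers.

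For the rank-two case I would dualize, using that the locally free $E$ is slope-stable if and only if $E^\vee=\mathcal Q_2(-2)|_X$ is. A saturated rank-two $F\subset E$ with $\hat\mu(F)>\hat\mu(E)$ has torsion-free rank-one quotient $Q=E/F$, and from $3\hat\mu(E)=2\hat\mu(F)+\hat\mu(Q)$ we get $\hat\mu(Q)<\hat\mu(E)$. Dualizing $0\to F\to E\to Q\to 0$ yields an exact sequence $0\to Q^\vee\to E^\vee\to F^\vee$, so $Q^\vee$ is a nonzero rank-one subsheaf of $E^\vee$ with $\hat\mu(Q^\vee)=-\hat\mu(Q)>-5/3=\hat\mu(E^\vee)$. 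As before its reflexive hull is a line bundle $\mathcal O_X(c)\hookrightarrow E^\vee$ with $c\geq -1$, giving a nonzero element of $H^0(X,E^\vee(-c))=H^0(X,\mathcal Q_2(-2-c)|_X)$, which is zero by the first equality of Lemma \ref{restrictioncohomology} (with $t=2+c\geq 1$). This final contradiction shows $E^\vee$, and hence $E$, is slope-stable.

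The cohomological input is entirely supplied by Lemma \ref{restrictioncohomology}, so the only genuine content is the sheaf-theoretic bookkeeping: reducing to saturated subsheaves, identifying their reflexive hulls with line bundles $\mathcal O_X(a)$ on the smooth $X$ of Picard number one, and the duality step that converts a rank-two destabilizer of $E$ into a rank-one destabilizer of $E^\vee$. I expect the rank-two case to require the most care, in tracking the slope inequalities through the dual sequence; happily, the non-integrality of $\hat\mu(E)=5/3$ removes any borderline strictly-semistable configuration, so no separate analysis of equality cases is needed.
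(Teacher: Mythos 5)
Your proof is correct and follows essentially the same route as the paper: both arguments use Picard rank one to write the determinant of a putative rank-1 or rank-2 destabilizer as $\mathcal O_X(t)$, convert it into a nonzero global section of a negative twist of $\mathcal Q_2|_X$ or $\wedge^2\mathcal Q_2|_X$, and invoke the vanishings of Lemma \ref{restrictioncohomology}. The only cosmetic difference is that the paper first passes to $\mathcal Q_2|_X$ and treats both ranks uniformly via the determinant map $\mathcal O(t)\to\wedge^r\mathcal Q_2|_X$, whereas you work with $E=\mathcal Q_2^\vee(2)|_X$ and handle rank two by dualizing --- for a rank-3 bundle these are the same maneuver, since $\wedge^2\mathcal Q_2\cong\mathcal Q_2^\vee(1)$.
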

\begin{proof}
The Mumford slope of a vector bundle is invariant up to twists and dualization, so the problem reduces to asking whether $\mathcal Q_2|_X$ is stable. Therefore, let us suppose there exists a subobject $\mathcal F\subset\mathcal Q_2|_X$. Then, since $G(2,V_5)$ has Picard number one, we have $c_1(\mathcal F) = \mathcal O(t)$ and this leads to the injection
\begin{equation}\label{stabilityinjection}
0\longrightarrow \mathcal O \longrightarrow \wedge ^r\mathcal Q_2|_X (-t)
\end{equation}
where $r$ is the rank of $\mathcal F$, which can be one or two. To have $\mathcal F$ as a destabilizing object for $\mathcal Q_2|_X$, $t$ must be strictly positive in order to satisfy the inequality of Mumford slopes
\begin{equation}
\frac{t}{r}=\mu(\mathcal F)\geq \mu(\mathcal Q_2|_X)=\frac{1}{3}.
\end{equation}
On the other hand, the injection in (\ref{stabilityinjection}) means that $\wedge ^r\mathcal Q_2|_X (-t)$ has global sections: what is left to prove is that it can be true only for zero or negative $t$. The latter follows from Lemma \ref{restrictioncohomology}.

\end{proof}
\noindent Let us suppose $X$ is contained in two Grassmannians $G_1$ and $G_2$, where the latter is the image of the former under an isomorphism of $\mathbb P^9$. Since both the restrictions of the normal bundles $\mathcal N_{i}|_X = \mathcal N_{G_i/\mathbb P^9}|_X=\mathcal Q_{2i}^{\vee}(2)|_X$ are stable, every morphism between them must be zero or an isomorphism. Below we furthermore prove that the isomorphism class of the normal bundle determines the Grassmannian. Combining these two facts will give us the uniqueness of the Grassmannian containing $X$.
\begin{lemm}\label{restrictionofsectionsofO1}
Let $X$ be a Calabi--Yau threefold described as the zero locus of a section of $\mathcal Q_2^\vee(2)$. Then the following isomorphism holds:
\begin{equation}
H^0(\mathbb P^9, \mathcal O(1))\cong H^0(X, \mathcal O_X(1))
\end{equation}
\end{lemm}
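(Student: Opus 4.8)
The map in question is restriction of linear forms, so the plan is to factor it through $G:=G(2,V_5)$ as
\[
H^0(\mathbb{P}^9,\mathcal{O}(1))\longrightarrow H^0(G,\mathcal O_G(1))\longrightarrow H^0(X,\mathcal O_X(1))
\]
and prove that each arrow is an isomorphism. The first arrow is an isomorphism because the Pl\"ucker embedding $G\hookrightarrow \mathbb{P}^9=\mathbb P(\wedge^2 V_5)$ is linearly normal and given by the complete system $|\mathcal O_G(1)|$; concretely $H^0(G,\mathcal O_G(1))=\wedge^2V_5^\vee$ has dimension $10=\dim H^0(\mathbb{P}^9,\mathcal O(1))$ and $G$ is nondegenerate. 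For the second arrow I would use the ideal sheaf sequence
\[
0\longrightarrow \mathcal I_X(1)\longrightarrow \mathcal O_G(1)\longrightarrow \mathcal O_X(1)\longrightarrow 0 .
\]
Since $\mathcal O_G(1)$ is ample one has $H^1(G,\mathcal O_G(1))=0$, so the associated long exact sequence shows that $H^0(G,\mathcal O_G(1))\to H^0(X,\mathcal O_X(1))$ is an isomorphism as soon as $H^0(G,\mathcal I_X(1))=H^1(G,\mathcal I_X(1))=0$.

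To prove this vanishing I would resolve $\mathcal I_X$ by the Koszul complex of the regular section of $\mathcal E:=\mathcal Q_2^\vee(2)$ cutting out $X$. As $X$ is a smooth threefold it has the expected codimension $3$ (the rank of $\mathcal E$) in $G$, so the section is regular and the Koszul complex is exact. Dualizing and twisting by $\mathcal O(1)$, and using $\det\mathcal Q_2=\wedge^3\mathcal Q_2=\mathcal O(1)$ to compute determinants, the resolution of $\mathcal I_X(1)$ becomes
\[
0\longrightarrow \mathcal O(-4)\longrightarrow \wedge^2\mathcal Q_2(-3)\longrightarrow \mathcal Q_2(-1)\longrightarrow \mathcal I_X(1)\longrightarrow 0,
\]
where $\mathcal E^\vee(1)=\mathcal Q_2(-1)$, $\wedge^2\mathcal E^\vee(1)=\wedge^2\mathcal Q_2(-3)$ and $\wedge^3\mathcal E^\vee(1)=\mathcal O(-4)$.

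The key step is then to show that all three locally free sheaves on the left are acyclic, i.e.\ have vanishing cohomology in every degree; splitting the resolution into two short exact sequences this at once yields $H^\bullet(G,\mathcal I_X(1))=0$, and in particular the two vanishings needed above. I would establish acyclicity by Borel--Weil--Bott (as used throughout this section, see \cite{Weyman}): encoding a homogeneous bundle $\Sigma^{(\beta_1,\beta_2,\beta_3)}\mathcal Q_2\otimes\Sigma^{(\gamma_1,\gamma_2)}\mathcal U_2$ by the weight $(\beta_1,\beta_2,\beta_3,\gamma_1,\gamma_2)$, letting a twist by $\mathcal O(m)$ add $(m,m,m,0,0)$, and taking $\rho=(4,3,2,1,0)$, one checks that
\[
\mathcal O(-4)\leftrightarrow(-4,-4,-4,0,0),\quad \wedge^2\mathcal Q_2(-3)\leftrightarrow(-2,-2,-3,0,0),\quad \mathcal Q_2(-1)\leftrightarrow(0,-1,-1,0,0)
\]
each have a $\lambda+\rho$ with a repeated entry, namely $(0,-1,-2,1,0)$, $(2,1,-1,1,0)$ and $(4,2,1,1,0)$ respectively, hence are singular and so the corresponding bundle has no cohomology at all.

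The computation is short once the conventions are fixed, so the only genuine obstacle is bookkeeping: one must ensure that the Koszul complex is really a resolution (this is exactly where smoothness, i.e.\ the expected codimension of $X$, enters), that the determinant twists are computed correctly, and that the Borel--Weil--Bott weight dictionary is applied consistently — the three singularity checks displayed above are precisely the content of the lemma. As a cross-check, the vanishing $H^0(G,\mathcal Q_2(-1))=0$ also follows from Lemma \ref{restrictioncohomology}, in agreement with the full acyclicity of $\mathcal Q_2(-1)$ found here.
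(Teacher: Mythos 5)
Your proof is correct and follows essentially the same route as the paper: factor the restriction through $G(2,V_5)$, then kill $H^0$ and $H^1$ of the relevant ideal sheaves using Koszul resolutions and vanishing theorems. The only difference is one of detail: where the paper tersely invokes a similar resolution together with Kodaira vanishing for the step from $G(2,V_5)$ to $X$, you write out the twisted Koszul resolution of $\mathcal I_{X/G}(1)$ explicitly and verify acyclicity of its terms $\mathcal O(-4)$, $\wedge^2\mathcal Q_2(-3)$, $\mathcal Q_2(-1)$ by Borel--Weil--Bott (your weight computations check out), which is a fully correct and, if anything, more complete implementation of the same idea.
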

\begin{proof}
The claim follows by proving separately that $H^0(\mathbb P^9, \mathcal O_{\mathbb P^9}(1))\cong H^0(G(2,V_5), \mathcal O_{G(2,V_5)}(1))$ and $H^0(\mathbb G(2,V_5), \mathcal O_{G(2,V_5)}(1))\cong H^0(X, \mathcal O_X(1))$. The first isomorphism comes from the $\mathcal O(1)$-twist of the Koszul resolution of the ideal sheaf of $G(2,V_5)\subset \mathbb P^9$, which proves the vanishing of the cohomology of $\mathcal I_{G(2,V_5)}(1)$, thus the desired result. The second isomorphism is proved in a similar way, with the resolution
\begin{equation}
0\longrightarrow\mathcal O(-4)\longrightarrow V_5\otimes\mathcal O(-2)\longrightarrow V_5\otimes\mathcal O(-1)\longrightarrow\mathcal I_{G(2,V_5)/\mathbb P^9}(1)\longrightarrow 0
\end{equation}
and Kodaira's vanishing theorem.
\end{proof}
\begin{lemm}\label{normalbundleandgrassmannian}
Let $X\in \bar{\mathcal{X}}_{25}$. Then the isomorphism class of $\mathcal N_{G(2,V_5)/\mathbb P^9}|_X$ determines the isomorphism $\psi:\wedge^2 V_5\to W$, where $V_5$ is a five dimensional vector space and $W\cong H^0(\mathbb P^9, \mathcal O(1))$. 
\end{lemm}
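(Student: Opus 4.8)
The plan is to reconstruct the Plücker embedding of $G(2,V_5)$ intrinsically from the bundle $\mathcal N:=\mathcal N_{G(2,V_5)/\mathbb P^9}|_X$, exploiting that $\mathcal N_{G(2,V_5)/\mathbb P^9}=\mathcal Q_2^\vee(2)$ and that both $\mathcal O_X(1)=\mathcal O_{\mathbb P^9}(1)|_X$ and $W=H^0(\mathbb P^9,\mathcal O(1))=H^0(X,\mathcal O_X(1))$ (the latter equality by Lemma \ref{restrictionofsectionsofO1}) are determined by $X\subset\mathbb P^9$. First I would untwist and dualize: since $\mathcal N=\mathcal Q_2^\vee(2)|_X$, the isomorphism class of $\mathcal N$ determines that of $\mathcal Q_2|_X=\bigl(\mathcal N\otimes\mathcal O_X(-2)\bigr)^\vee$.

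Next I would recover $V_5$ together with its tautological evaluation. By Lemma \ref{restrictioncohomology} the restriction yields a canonical isomorphism $V_5\cong H^0(X,\mathcal Q_2|_X)$, and $\mathcal Q_2|_X$ carries its canonical (surjective) evaluation $\mathrm{ev}\colon V_5\otimes\mathcal O_X\to\mathcal Q_2|_X$. Taking third exterior powers and using $\wedge^3\mathcal Q_2|_X=\det(\mathcal Q_2|_X)\cong\mathcal O_X(1)$ — which holds because $c_1(\mathcal Q_2)$ is the hyperplane class and $\operatorname{Pic}(X)=\mathbb Z$ — the map $\wedge^3\mathrm{ev}$ produces a morphism $\wedge^3 V_5\otimes\mathcal O_X\to\mathcal O_X(1)$ and hence, on global sections, a linear map
\[ \psi\colon\ \wedge^3 V_5\longrightarrow H^0(X,\mathcal O_X(1))=W. \]
Since every ingredient is built functorially out of $\mathcal Q_2|_X$, the map $\psi$ depends only on the isomorphism class of $\mathcal N$, up to the action of $GL(V_5)$ induced by the choice of identification $V_5\cong H^0(X,\mathcal Q_2|_X)$ and up to the scalar implicit in $\wedge^3\mathcal Q_2|_X\cong\mathcal O_X(1)$.

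The key step is to prove that $\psi$ is an isomorphism, and here I would compare $X$ with the ambient Grassmannian. By Lemmas \ref{restrictioncohomology} and \ref{restrictionofsectionsofO1} the restriction maps $H^0(G(2,V_5),\mathcal Q_2)\to H^0(X,\mathcal Q_2|_X)$ and $H^0(G(2,V_5),\wedge^3\mathcal Q_2)\to H^0(X,\wedge^3\mathcal Q_2|_X)$ are isomorphisms, and they are compatible with the exterior-power construction; thus $\psi$ is identified with the corresponding map $\wedge^3 V_5\to H^0(G(2,V_5),\wedge^3\mathcal Q_2)$ on $G(2,V_5)$. By the Borel--Weil--Bott theorem $H^0(G(2,V_5),\wedge^3\mathcal Q_2)\cong\wedge^3 V_5$, so this is a nonzero $GL(V_5)$-equivariant endomorphism of the irreducible module $\wedge^3 V_5$, hence an isomorphism by Schur's lemma; concretely it is the classical identification of the Plücker coordinates with the maximal minors.

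Finally I would conclude that $\psi$ pins down the Grassmannian. The isomorphism $\psi$ identifies $\mathbb P^9=\mathbb P(W)$ with $\mathbb P(\wedge^3 V_5)$, and under this identification $G(2,V_5)$ is recovered as the locus of decomposable tensors; replacing the identification $V_5\cong H^0(X,\mathcal Q_2|_X)$ by $g\in GL(V_5)$ changes $\psi$ into $\psi\circ\wedge^3 g$, which preserves the decomposable locus, so the sub-Grassmannian $G(2,V_5)\subset\mathbb P^9$ is well defined. Up to the standard self-dualities of the Plücker spaces ($\wedge^3 V_5\cong\wedge^2 V_5^\vee$, equivalently $G(2,V_5)\cong G(3,V_5^\vee)$) this $\psi$ is exactly the asserted isomorphism $\wedge^2 V_5\to W$. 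I expect the only real obstacle to be the nondegeneracy of $\psi$: reducing from $X$ to $G(2,V_5)$ through the cohomology-comparison Lemmas \ref{restrictioncohomology} and \ref{restrictionofsectionsofO1} and then feeding in the representation-theoretic input, whereas the untwisting and the Plücker bookkeeping are routine.
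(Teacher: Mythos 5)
Your proposal is correct and follows essentially the same route as the paper: recover $\mathcal Q_2|_X$ from $\mathcal N$ by untwisting and dualizing, apply the third exterior power of the tautological surjection to obtain $\wedge^3 V_5\otimes\mathcal O\to\wedge^3\mathcal Q_2\cong\mathcal O(1)$, and transfer global sections from $G(2,V_5)$ and $\mathbb P^9$ to $X$ via Lemmas \ref{restrictioncohomology} and \ref{restrictionofsectionsofO1}. Your extra justifications (Schur's lemma for the nondegeneracy of $\psi$, and the explicit tracking of the $GL(V_5)$ and scalar ambiguities) are refinements of points the paper leaves implicit rather than a different argument.
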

\begin{proof}
Since $\mathcal N_{G(2,V_5)/\mathbb P^9}\cong \mathcal Q_2^\vee(2)$, the restriction of the normal bundle is determined by the restriction of the quotient bundle. Let us begin noting that the surjection
\begin{equation}
V_5\otimes\mathcal O\longrightarrow \mathcal Q_2\longrightarrow 0
\end{equation}
implies the following
\begin{equation}
\wedge^3V_5\otimes\mathcal O\longrightarrow \wedge^3\mathcal Q_2\longrightarrow 0.
\end{equation}
Let us observe that $\wedge^3\mathcal Q_2\cong\mathcal O(1)$. Then, this last surjection tells us that
\begin{equation}
H^0(G(2,V_5), \mathcal O(1))\cong\wedge^3 H^0(G(2,V_5), \mathcal Q_2).
\end{equation}
From Lemma \ref{restrictioncohomology} we have that $H^0(G(2,V_5), \mathcal Q_2)\cong H^0(X, \mathcal Q_2|_X))$, while Lemma \ref{restrictionofsectionsofO1} tells us that $H^0(\mathbb P^9, \mathcal O(1))\cong H^0(X, \mathcal O(1))$. Then, since $H^0(G(2,V_5), \mathcal Q_2)\cong V_5$, we get an isomorphism $\wedge^3 V_5\to W^\vee$ whose dual is exactly $\psi$ since $\wedge^3 V_5\cong \wedge^2 V^\vee_5$.
\end{proof}
\begin{cor} \label{uniquenessGrassmannian} If $X\subset\mathbb{P}^9$ is a Calabi--Yau threefold from the family $\bar{\mathcal{X}}_{25}$, then $X$ is contained in a unique Grassmannian $G(2,5)$ in its Pl\"ucker embedding.
\end{cor}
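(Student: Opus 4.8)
The plan is to suppose that $X$ lies in two Pl\"ucker-embedded Grassmannians $G_1,G_2\subset\mathbb{P}^9$, with $G_2$ the image of $G_1$ under a linear automorphism of $\mathbb{P}^9$, and to deduce $G_1=G_2$ by showing that the restricted normal bundles $\mathcal N_{G_1/\mathbb P^9}|_X$ and $\mathcal N_{G_2/\mathbb P^9}|_X$ are isomorphic, after which Lemma \ref{normalbundleandgrassmannian} forces the two Grassmannians to coincide. Both bundles are isomorphic to $\mathcal Q_2^\vee(2)|_X$ and hence slope-stable by Lemma \ref{slopestable}, and they have the same slope with respect to $\mathcal O_X(1)=\mathcal O_{\mathbb P^9}(1)|_X$; therefore any nonzero morphism between them is an isomorphism. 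The task is thus to compare them.

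To produce the comparison I would work inside $N:=\mathcal N_{X/\mathbb P^9}$ (of rank $6$). Writing $A:=\mathcal N_{X/G_1}=\ker(N\to\mathcal N_{G_1/\mathbb P^9}|_X)$ and $B:=\mathcal N_{G_2/\mathbb P^9}|_X=N/\mathcal N_{X/G_2}$, and using that $X$ is the zero locus of a section of $\mathcal Q_2^\vee(2)$ in $G_1$, one has $A\cong\mathcal N_{G_1/\mathbb P^9}|_X\cong\mathcal Q_2^\vee(2)|_X$, so $A$ is stable of the same slope as $B$. The composite $\phi\colon A\hookrightarrow N\twoheadrightarrow B$ is then either zero or an isomorphism. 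If $\phi$ is an isomorphism, then $\mathcal N_{G_2/\mathbb P^9}|_X=B\cong A\cong\mathcal N_{G_1/\mathbb P^9}|_X$ and we are done. If $\phi=0$, then $A\subseteq\ker(N\to B)=\mathcal N_{X/G_2}$; since $A$ and $\mathcal N_{X/G_2}$ are saturated subbundles of $N$ of the same rank $3$, the torsion quotient $\mathcal N_{X/G_2}/A$ injects into the locally free sheaf $N/A=\mathcal N_{G_1/\mathbb P^9}|_X$ on the smooth (hence integral) variety $X$, and therefore vanishes; thus $A=\mathcal N_{X/G_2}$ and again $\mathcal N_{G_2/\mathbb P^9}|_X=N/A=\mathcal N_{G_1/\mathbb P^9}|_X$. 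In either case the two restricted normal bundles are isomorphic.

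Once $\mathcal N_{G_1/\mathbb P^9}|_X\cong\mathcal N_{G_2/\mathbb P^9}|_X$ is established, Lemma \ref{normalbundleandgrassmannian} attaches to each the same isomorphism $\psi\colon\wedge^2V_5\to W$ with $W\cong H^0(\mathbb P^9,\mathcal O(1))$; since $\psi$ recovers the Pl\"ucker-embedded Grassmannian as the image of $G(2,V_5)$ under $\mathbb P(\psi)$, this yields $G_1=G_2$ as subvarieties of $\mathbb P^9$, which is the assertion.

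The main obstacle I anticipate is the stability input for the \emph{second} Grassmannian: Lemma \ref{slopestable} is phrased for a zero locus of $\mathcal Q_2^\vee(2)$, so one must first observe that membership of $X$ in $\bar{\mathcal X}_{25}$ together with the inclusion $X\subset G_2$ forces $X$ to arise as such a zero locus in $G_2$ as well, so that $\mathcal N_{G_2/\mathbb P^9}|_X\cong\mathcal Q_2^\vee(2)|_X$ is genuinely slope-stable of the expected slope. Granting this, the case analysis above is purely formal, and the only genuinely geometric ingredient is the recovery of the embedded Grassmannian from the normal bundle furnished by Lemma \ref{normalbundleandgrassmannian}.
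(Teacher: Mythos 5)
Your proof is essentially the paper's own argument: the same two normal-bundle exact sequences, the same composite map $\phi\colon \mathcal N_{X|G_1}\to \mathcal N_{G_2|\mathbb{P}^9}|_X$, the same stability dichotomy (zero or isomorphism), and the same final appeal to Lemma \ref{normalbundleandgrassmannian}, with the $\phi=0$ case worked out in more detail than the paper gives. The obstacle you flag at the end — that slope-stability of $\mathcal Q_{2}^{\vee}(2)|_X$ coming from the \emph{second} Grassmannian requires knowing $X$ is a zero locus of a section there as well — is real but is passed over silently in the paper too, whose proof simply asserts stability of $\mathcal Q_{2i}^{\vee}(2)|_X$ for both $i$.
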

\begin{proof} Suppose that $X$ is contained in two Grassmannians $G_1$, $G_2$ for each of them we have an exact sequence:
$$0\to \mathcal N_{X|G_i}\to \mathcal N_{X|\mathbb{P}^9}\to \mathcal N_{G_i|\mathbb{P}^9}|_X\to 0 $$
Combining the two exact sequences we obtain a map:
$\phi: \mathcal N_{X|G_1}\to \mathcal N_{G_2|\mathbb{P}^9}|_X$.
Note that we have $\mathcal N_{X|G_i}\simeq \mathcal N_{G_i|\mathbb{P}^9}|_X\simeq \mathcal Q_{2i}^{\vee}(2)|_X$.
By stability of $Q_{2i}^{\vee}(2)$ we have $\phi$ is either trivial or an isomorphism. If it is an isomorphism it induces an isomorphism  $\mathcal N_{G_1|\mathbb{P}^9}|_X\simeq \mathcal N_{G_2|\mathbb{P}^9}|_X$ and we conclude by Lemma \ref{normalbundleandgrassmannian}. If it is trivial it lifts to an isomorphism $\mathcal N_{X|G_1}\simeq \mathcal N_{X|G_2}$ which again gives an isomorphism $\mathcal N_{G_1|\mathbb{P}^9}|_X\simeq \mathcal N_{G_2|\mathbb{P}^9}|_X$ and permits us to conclude again by Lemma \ref{normalbundleandgrassmannian}.

\end{proof}
\noindent Now we are ready to prove the main theorem of this section.
\begin{thm}\label{thmnonisomorphic}
Let $F$ be the partial flag manifold $F(2,3,V_5)$, let $p$ and $q$ be the projections to the two Grassmannians $G(2,V_5)$ and $G(3,V_5)$.\\
Then a general section $s \in H^0(F,\mathcal O(1,1))$ gives rise to two non-birational Calabi--Yau threefolds $X = Z(p_*s)$ and $Y = Z(q_*s)$.
\end{thm}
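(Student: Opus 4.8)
The plan is to convert non-birationality into a concrete matrix statement and then certify that statement for a general section. Since $X$ and $Y$ are Calabi--Yau threefolds of Picard number one, any birational map between them is an isomorphism (with $\rho = 1$ there are no flops to perform, so birational minimal models coincide), and because each is embedded in $\mathbb{P}^9$ by the complete linear system of its polarization (Lemma \ref{restrictionofsectionsofO1}), any such isomorphism preserves $\mathcal{O}(1)$ up to the ambiguity allowed by $\rho=1$ and is therefore realized by a projective linear automorphism of $\mathbb{P}^9$. Thus it suffices to prove that, for general $s$, the threefolds $X=Z(p_*s)$ and $Y=Z(q_*s)$ are not projectively equivalent.

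Next I would exploit the uniqueness of the ambient Grassmannians. By Corollary \ref{uniquenessGrassmannian}, $X$ lies in a unique $G(2,V_5)$ and $Y$ in a unique $G(3,V_5)$, both in their Pl\"ucker embeddings, so a projective equivalence carrying $X$ onto $Y$ must carry $G(2,V_5)$ onto $G(3,V_5)$ and hence restricts to an isomorphism $f:G(2,V_5)\to G(3,V_5)$ of the form (\ref{iso}) with $f(X)=Y$. Now $Y$ is dual to $X$ by construction (Definition \ref{dualitydefn}), so the relation $f(X)=Y$ together with the very definition of $f$-duality shows that $X$ is $f$-dual to itself. By Corollary \ref{selfdual}, writing $S$ for the $10\times 10$ matrix in $\mathcal{H}_F$ that represents $s$ (via Lemma \ref{directsum}) as in (\ref{S}), and $M_f=\wedge^2 T_f$, this forces the identity $S^T M_f = S M_f$, i.e. the pure second exterior power $M_f$ of an invertible $5\times 5$ matrix conjugates $S^T$ into $S$.

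It therefore remains to show that a general $S\in\mathcal{H}_F$ admits no such $f$: there is no invertible $T_f\in GL(V_5)$ for which $\wedge^2 T_f$ intertwines $S^T$ and $S$. Here I would use the explicit linear equations cutting out $\mathcal{H}_F$ inside $H^0(P,\mathcal{O}(1,1))$ from Remark \ref{eqhf}. The intertwining relation $S^T(\wedge^2 T)=(\wedge^2 T)\,S$ is a system of quadratic equations in the entries of $T$, and the locus of $S$ admitting an invertible solution is a constructible (indeed, after compactifying the parameter $[T]$, closed) subset of $\mathcal{H}_F$; exhibiting a single explicit section $S_0\in\mathcal{H}_F$ for which this system has no invertible solution then places $S_0$ outside that locus and yields the statement for all $s$ in a dense open subset of $\mathcal{H}_F$.

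The main obstacle is precisely this final step, the explicit incompatibility check — the concrete hyperplane fixed by no $f$. The difficulty is that one cannot argue naively that $S$ and $S^T$ are always conjugate (they are, via arbitrary matrices): the conjugator $M_f$ is constrained to be a pure wedge square $\wedge^2 T_f$, a Pl\"ucker-type nonlinear condition. Disentangling the linear self-duality equations $S^T M = M S$ from the quadratic Pl\"ucker constraints on $M$, so as to certify that no common invertible solution exists, is the computational heart of the argument; I would carry it out in the coordinates of (\ref{S}) and Remark \ref{eqhf} by testing the incompatibility on one explicit $S_0$.
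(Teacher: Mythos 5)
Your reduction is exactly the paper's: Picard number one together with Lemma \ref{restrictionofsectionsofO1} turns birationality into projective equivalence, Corollary \ref{uniquenessGrassmannian} forces any such equivalence to restrict to an isomorphism $f\colon G(2,V_5)\to G(3,V_5)$, and self-$f$-duality via Corollary \ref{selfdual} becomes the matrix identity $S^TM_f=M_fS$ with $M_f=\wedge^2T_f$. Up to this point your argument and the paper's proof coincide step by step. The genuine gap is that you stop exactly where the content of the theorem begins: after the reduction, the theorem \emph{is} the assertion that a general $S\in\mathcal{H}_F$ admits no invertible $T$ with $S^T(\wedge^2T)=(\wedge^2T)S$, and you neither exhibit a candidate $S_0$ nor verify anything about it. The paper does this work: it writes down an explicit matrix $S$ satisfying the equations of Remark \ref{eqhf} and certifies non-existence of invertible solutions by a Macaulay2 computation \cite{macaulay2} (saturating the ideal of entries of $S^TM-MS$, with $M=\wedge^2T$ and $T$ generic, by $\det T$, in characteristic $17$).

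There is also a flaw in the logic by which you pass from one example to genericity. You claim the locus of $S$ admitting an invertible solution becomes closed ``after compactifying the parameter $[T]$''. It does not: every $T$ of rank at most one satisfies $S^T(\wedge^2T)=(\wedge^2T)S$ trivially, because $\wedge^2T=0$, so the compactified incidence variety in $\mathcal{H}_F\times\mathbb{P}(\operatorname{End}(V_5))$ surjects onto all of $\mathcal{H}_F$ and its closed image carries no information. The honest statement is that the bad locus is the image of the incidence over \emph{invertible} $T$, hence merely constructible, and a single point outside a constructible set does not bound its closure; alternatively one may take the closure of that incidence before projecting, but then checking that $S_0$ avoids the resulting closed set is a different (harder) computation than checking that $S_0$ itself has no invertible solution. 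What actually secures genericity is a dimension count, which the paper supplies in the remark following its proof: a general $S\in\mathcal{H}_F$ has ten distinct nonzero eigenvalues, for such $S$ the solutions $M$ of $SM=MS^T$ form a $10$-dimensional family of symmetric matrices, the symmetric matrices in $\wedge^2GL(V_5)$ form a $15$-dimensional family, and Kleiman transversality \cite{Kleiman} together with the inequality $25+10+15<55$ shows the two loci miss each other for general $S$. To complete your proposal you need either this count or the explicit computation combined with a corrected genericity argument.
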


\begin{proof}

Because of Lemma \ref{uniquelydetermined}, we deduce that if there exists an isomorphism mapping $X$ to $Y$, then it is given by a map $f:G(2,V_5)\to G(3,V_5)$. Recall that such a map is determined by a linear isomorphism from $T_f:V_5\to V_5^\vee$.\\
\\
Thus, because of Corollary \ref{uniquenessGrassmannian}, $X$ and $Y$ are dual and isomorphic only if there exist $f:G(2,V_5)\to G(3,V_5)$ such that $X$ is $f$-dual to $X$.
This, by Corollary \ref{selfdual} translates to the fact that a section $s_X\in \mathcal{H}_F$ from Lemma \ref{directsum} defining $X$ on $F$ is in the fixed locus of the action of $\widetilde\iota_f$ onto $\mathcal{H}_F$. More explicitly this means that $M_f^{-1}S^TM_f=S$ for $S$ being the matrix associated to the section $s_X$. The proof amounts now to check that for $S$ corresponding to an element of $\mathcal{H}_F$ the equation 
$$S^TM-MS=0$$
has no solutions among matrices $M$ of the form $M=\wedge^2 T$. This is done via the following script in Macaulay2 \cite{macaulay2} performed in positive characteristic :
\begin{verbatim}
R=ZZ/17[a_1..a_25]
S=matrix{
{ 1 ,0,0,0,0,0,0,0,0,0},
{0, 0 ,0,0,0,0,0,0,0,0},
{0,0, 0 ,0,0,0,0,0,0,0},
{0,0,0, 0 ,0,0,0,0,0,0},
{0,1,0,0, 0 ,0,0,0,0,0},
{0,0,0,0,0,-1 ,0,0,0,0},
{0,0,0,0,0,0, 1 ,0,0,0},
{0,0,0,0,0,0,0,-1 ,0,0},
{0,0,0,0,0,0,0,0,-1 ,0},
{0,0,0,0,0,0,0,0,0, 1 }}
T=genericMatrix(R,5,5)
M=exteriorPower(2,T)
Sol=ideal flatten(transpose(S)*M-M*S)
saturate(Sol, ideal det T).
\end{verbatim}

Here we chose $S$ a matrix satisfying the equations defining $\mathcal{H}_F= H^0(I_{F^{\vee}|P^{\vee}})^{\perp}$ as in  Remark \ref{eqhf}\\

This implies that a general hyperplane section $s$ of the flag variety $F$ yields two Calabi--Yau threefolds $X$ and $Y$ which are dual, but not  projectively isomorphic. By the fact that the studied manifolds have Picard number one we conclude that they are not birational.
\end{proof}

\begin{rem}
The above proof being very explicit has the advantage that it permits to show a concrete example of a pair of  Calabi--Yau varieties in our family which are dual but not birational. We can however perform a more conceptual proof, which is more susceptible to generalization and permits to estimate the expected codimension of the fixed locus of our duality. It is based on Kleiman transversality of a general translate. We sketch it below. Let us first observe that a general element in $\mathcal{H}_F$ is a matrix with $10$ distinct non-zero eigenvalues. This can be checked in a specific example and expanded by openness. For such elements $S$ the space of matrices $M\in GL(\wedge^2 V)$ which satisfy 
$SM=MS^T$ is a 10 dimensional subset of symmetric matrices. 
To see that, we put $S$ in Jordan normal form $S=J^{-1}DJ$ with $D$ diagonal with distinct nonzero entries and then $J^{-1}DJM=MJ^TDJ^{-T}$ leads to the conclusion that $JMJ^T$ commutes with $D$ hence is diagonal. It follows that $M$ is symmetric and moves in a 10 dimensional family $\mathcal{M}_J$. 

Now note that $GL(10)$ acts transitively on the space of symmetric matrices via $K\cdot M=K^TMK$.
We finally observe that $SM-MS^T=0$ has a solution of the form $M=\wedge^2 N$ with $N\in GL(V)$ exactly when $\mathcal{M}_J\cap \wedge^2 GL(V)$ is non-empty. Observe that the space of symmetric matrices in $\wedge^2 GL(V)$ is of dimension 15 and is represented by elements of $GL(V)$ which are symmetric. We can now perform a dimension count based on Kleiman transversality (\cite[Theorem 2, Lemma 1]{Kleiman}) by finding a map $\theta$ from $GL(10)$ to some variety $B$ whose general fibers are of dimension 25 and meet the locus $$\{G\in GL(10): \exists D \text{ diagonal with distinct nonzero eigenvalues  such that } G^{-1}DG\in H_0\}.$$ The latter fibration exists for dimensional reasons. Then from the inequality $25+10+15<55$ we deduce that for general $b\in B$ we have  $\theta^{-1}(b)\cdot \mathcal{M}_{Id} \cap \wedge^2 GL(V)=\emptyset$. This implies that for every $G\in \theta^{-1}(b)$ and every  $D$ diagonal with distinct nonzero eigenvalues there is no solution to $SM=MS^T$ when $S=G^{-1}DG$ and $M\in \wedge^2 GL(V)$. By our choice of fibration the latter includes some $S\in \mathcal{H}_F$ which completes the proof. 

\end{rem}

\begin{cor} If $\tilde{X}$, $\tilde{Y}$ are general Calabi--Yau threefolds in $\mathcal{X}_{25}$  which are dual in the sense of \cite{OttemRennemo,BorisovCaldararuPerry} then they are not birational. 
\end{cor}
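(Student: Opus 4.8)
The plan is to propagate the non-birationality proved on the boundary (Theorem \ref{thmnonisomorphic}) into the interior of $\mathcal{X}_{25}$ by means of the specialization theorem of Matsusaka and Mumford \cite{MM}. The first reduction is exactly the one already used in Theorem \ref{thmnonisomorphic}: for smooth Calabi--Yau threefolds of Picard number one, birationality is equivalent to biregular (indeed projective) isomorphism. A birational map between two such threefolds is an isomorphism in codimension one, and since the nef cone is the single ray spanned by the ample generator there is no flopping contraction; hence the map extends to an isomorphism carrying ample generator to ample generator. So it suffices to show that a general dual pair $\tilde X,\tilde Y\in\mathcal{X}_{25}$ is not isomorphic as a polarized pair, and the problem becomes one of transporting non-isomorphism across a degeneration.

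Next I set up the degenerating family. By the concluding Remark of Section \ref{section description of the families}, the duality studied here on $\bar{\mathcal{X}}_{25}$ coincides with the duality $[v]\mapsto[v^T]$ on $\mathbb{P}(\mathfrak{gl}(V_5))$ of \cite[sec. 5]{OttemRennemo}, which is itself the restriction to the boundary of the projective-dual duality on $\mathcal{X}_{25}$ from \cite{OttemRennemo,BorisovCaldararuPerry}. Using this compatibility, a general dual pair $\tilde X,\tilde Y\in\mathcal{X}_{25}$ admits a one-parameter degeneration over a discrete valuation ring with fraction field $K$ and residue field $\mathbb{C}$, whose generic fibre is the pair $\tilde X,\tilde Y$ and whose special fibre is a general dual pair $X_0,Y_0\in\bar{\mathcal{X}}_{25}$, the duality relation being preserved throughout; concretely this is the degeneration of $G\cap gG$ to the zero locus of a section of $\mathcal{Q}_2^\vee(2)$ as $g\to\mathrm{id}$ along a general tangent direction. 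Since both families have Picard number one, every fibre carries the canonical polarization given by the ample generator of its Picard group, and these assemble into relatively ample bundles on the two total spaces; the fibres are smooth Calabi--Yau threefolds, in particular geometrically integral.

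Now suppose, for contradiction, that the general dual pair in $\mathcal{X}_{25}$ were birational, hence polarized-isomorphic by the first paragraph. By spreading out, this isomorphism holds over the generic point of the degenerating family, so $\tilde X_K\cong\tilde Y_K$ as polarized $K$-varieties. As Calabi--Yau threefolds have trivial canonical bundle they are not uniruled, so the special fibres $X_0,Y_0$ are not ruled and the hypotheses of Matsusaka--Mumford \cite{MM} are satisfied. The theorem then forces the polarized isomorphism of generic fibres to specialize to a polarized isomorphism $X_0\cong Y_0$. But $X_0,Y_0$ is a general dual pair in $\bar{\mathcal{X}}_{25}$, which by Theorem \ref{thmnonisomorphic} is not birational, and hence not isomorphic --- a contradiction. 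Therefore $\tilde X$ and $\tilde Y$ are not birational.

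The main obstacle lies in the middle step: one must verify that a general member of $\bar{\mathcal{X}}_{25}$, together with its dual, genuinely occurs as the flat limit of a dual pair from $\mathcal{X}_{25}$, that the projective-dual duality specializes to the transpose duality $[v]\mapsto[v^T]$ on the boundary so that the limit of the dual is the dual of the limit, and that the canonical polarizations extend to relatively ample line bundles, so that Matsusaka--Mumford applies to polarized rather than merely abstract varieties. Once this compatibility is secured, the remaining steps are formal, the non-uniruledness of Calabi--Yau threefolds supplying the single geometric hypothesis of the specialization theorem.
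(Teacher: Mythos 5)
Your proposal is correct and follows essentially the same route as the paper: both reduce birationality to polarized isomorphism via Picard number one, degenerate a general dual pair in $\mathcal{X}_{25}$ to a dual pair on the boundary $\bar{\mathcal{X}}_{25}$ (using the compatibility of the two dualities noted in the remark at the end of Section \ref{section description of the families}), and then invoke the Matsusaka--Mumford theorem to specialize a hypothetical polarized isomorphism to the boundary, contradicting Theorem \ref{thmnonisomorphic}. The paper phrases the family as living over an open neighborhood $\mathfrak{U}$ of a general boundary point rather than over a DVR, but the argument is the same; your write-up simply makes explicit the verifications the paper leaves implicit.
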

\begin{proof} Consider an open neighborhood $\mathfrak{U}\subset \mathcal{X}_{25}$ of a general $X\in \bar{\mathcal{X}}_{25}$. Consider also the family $\mathfrak{V}$ of duals parametrized by $\mathfrak{U}$. Now $\mathfrak{U}$ and $\mathfrak{V}$ are families of polarized Calabi--Yau threefolds such that, by Theorem \ref{thmnonisomorphic}, there exists a fiber of $\mathfrak U$ which is not isomorphic to the corresponding fiber of $\mathfrak V$. Then by the Matsusaka--Mumford theorem \cite{MM} the corresponding general fibers are not isomorphic and consequently  general dual pairs in $\mathcal{X}_{25}$ are not isomorphic.
\end{proof}

\section{The \texorpdfstring{$\mathbb{L}$}{L}-equivalence in the Grothendieck ring of varieties} \label{section L-equivalence}
\noindent Hereafter we will show how, in the relation (\ref{LequivalenceOttem}), the power of $\mathbb L$ drops to two. This result is due to the characteristics of the fibrations described in (\ref{bigdiagram}), which are special to  $\bar{\mathcal X}_{25}$. 
\begin{thm}\label{thmLeq}
Let $s$ be a generic section of $\mathcal O(1,1)$ on the flag $F$, let $p$ and $q$ be the projections to $G(2,V_5)$ and $G(3,V_5)$. Then, given $X=Z(p_*s)$ and $Y=Z(q_*s)$, we have the following relation in the Grothendieck ring of varieties, where $\mathbb L$ is the class of the affine line.
\begin{equation}
([X] - [Y])\mathbb L^2 = 0
\end{equation}
\end{thm}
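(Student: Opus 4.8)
The plan is to compute the class $[M]$ of the hyperplane section of the flag $F$ in the Grothendieck ring in two different ways, one via each of the projections $f_1$ and $f_2$ of diagram (\ref{bigdiagram}), and then to exploit that the fibers of these maps jump precisely over $X$ and over $Y$ respectively. The whole argument is a cut-and-paste (scissor) computation, so the essential input is the fiberwise description of $f_1$ and $f_2$ already recorded in the diagram.

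First I would analyze $f_1\colon M\to G(2,V_5)$. Recall from Lemma \ref{thepushforwards} that $F=\mathbb{P}(\mathcal Q_2^\vee(2))$ and that $M=Z(s)$ is cut out by a section of $\mathcal O(1,1)$ whose pushforward is $s_2=p_*s\in H^0(G(2,V_5),\mathcal Q_2^\vee(2))$. Over the open set $U=G(2,V_5)\setminus X$ the section $s_2$ is nowhere vanishing, hence gives an exact sequence $0\to\mathcal O_U\to\mathcal Q_2^\vee(2)|_U\to\mathcal G\to 0$ with $\mathcal G$ locally free of rank $2$, and the relative hyperplane $M|_U$ is a Zariski-locally-trivial $\mathbb{P}^1$-bundle, being the projectivization of a rank-$2$ bundle. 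Over $X$, where $s_2$ vanishes, the whole fiber survives, so $f_1^{-1}(X)=\mathbb{P}(\mathcal Q_2^\vee(2)|_X)$ is a $\mathbb{P}^2$-bundle. Stratifying $M$ along $X$ and using $[\mathbb{P}^1]=1+\mathbb{L}$ and $[\mathbb{P}^2]=1+\mathbb{L}+\mathbb{L}^2$, I would obtain
\[
[M]=(1+\mathbb{L})\bigl([G(2,V_5)]-[X]\bigr)+(1+\mathbb{L}+\mathbb{L}^2)[X]=(1+\mathbb{L})[G(2,V_5)]+\mathbb{L}^2[X].
\]

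By the entirely symmetric argument for $f_2\colon M\to G(3,V_5)$, whose fibers are $\mathbb{P}^1$ away from $Y$ and $\mathbb{P}^2$ over $Y$, I would get $[M]=(1+\mathbb{L})[G(3,V_5)]+\mathbb{L}^2[Y]$. Since $G(3,V_5)\cong G(2,V_5^\vee)\cong G(2,V_5)$, the two Grassmannians have the same class in the Grothendieck ring; equating the two expressions for $[M]$ and cancelling the common term $(1+\mathbb{L})[G(2,V_5)]$ leaves $\mathbb{L}^2[X]=\mathbb{L}^2[Y]$, that is $([X]-[Y])\mathbb{L}^2=0$.

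The hard part will be justifying the clean multiplier $(1+\mathbb{L})$ over the open stratum with no error terms: one must verify that the fiber dimension really drops to $1$ exactly off $X$ (equivalently that $s_2$ is nowhere zero on $U$) and that the restricted fibration is the projectivization of an honest rank-$2$ vector bundle, so that it is \emph{Zariski}-locally trivial and $[\mathbb{P}(\mathcal G)]=(1+\mathbb{L})[U]$ holds on the nose rather than merely up to a conic-bundle correction. This is precisely the content of the fiber description in diagram (\ref{bigdiagram}) combined with the splitting produced by a nowhere-vanishing section; once it is in place, the remaining Grothendieck-ring bookkeeping and the identification $[G(2,V_5)]=[G(3,V_5)]$ are routine.
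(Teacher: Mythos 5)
Your proposal is correct and follows essentially the same route as the paper's own proof: both identify $M\to G(2,V_5)$ as the projectivization of the rank-$2$ quotient of $\mathcal Q_2^\vee(2)$ by the trivial subbundle spanned by the nowhere-vanishing section $p_*s$ off $X$ (a Zariski-locally trivial $\mathbb{P}^1$-bundle) and as the full $\mathbb{P}^2$-bundle $\mathbb{P}(\mathcal Q_2^\vee(2)|_X)$ over $X$, then compute $[M]$ twice and cancel using $[G(2,V_5)]=[G(3,V_5)]$. Your write-up is in fact slightly more careful than the paper's, which leaves the identification $[G(2,V_5)]=[G(3,V_5)]$ and the Zariski-local triviality implicit.
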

\begin{proof}
With the aid of previous results, the proof is immediate from the following claim.\\ 
\\
{\bf Claim.} The maps $\pi_i$ define $\mathbb P^2$-bundles over the Calabi--Yau threefolds and  $\mathbb P^1$-bundles over the complements of the Calabi-Yau threefolds in the Grassmannians. In particular, the maps $\pi_i$ are piecewise trivial fibrations.\\
\\
Indeed, $p^{-1}(X)=\mathbb{P}(Q_2^\vee(2)|_X)$ whereas
$p^{-1}(G(2,V_5)\setminus X)=\mathbb{P}((Q_2(2)|_X)/(p_*s|_{G(2,V_5)\setminus X}) \cdot \mathcal{O}_{G(2,V_5)}|_{G(2,V_5)\setminus X})$. The latter quotient is a vector bundle since $p_*s$ does not vanish outside $X$. The argument for $q$ is completely symmetric.

Using the claim we write the following relations in the Grothendieck ring of varieties:
\begin{equation}
[M]=[X][\mathbb P^2]+[G(2,V_5)\backslash X][\mathbb P^1]
\end{equation}
\begin{equation}
[M]=[Y][\mathbb P^2]+[G(3,V_5)\backslash Y][\mathbb P^1]
\end{equation}
W compare the two expressions and, using properties of the Grothendieck ring of varieties, we get
\begin{equation}
0 = [X]([\mathbb P^2]-[\mathbb P^1])-[Y]([\mathbb P^2]-[\mathbb P^1])
\end{equation}
which, via the formula
\begin{equation}
[\mathbb P^n]=1+\mathbb L + \mathbb L^2 +\dots +\mathbb L^n,
\end{equation}
yields the desired result.
\end{proof}
\section{Derived equivalence}\label{section derived}
\noindent From a theorem of Orlov in \cite{Orlov}, we deduce the following orthogonal decompositions for a hyperplane section of $F$:
\begin{equation}\label{orlov}
\begin{split}
D^b(M) &= \left<D^b(G(2,V_5)), D^b(G(2,V_5))\otimes\mathcal O(1,1), p^*D^b(X)\right>\\
		&= \left<D^b(G(3,V_5)), D^b(G(3,V_5))\otimes\mathcal O(1,1), q^*D^b(Y) \right>
\end{split}
\end{equation}
In the remainder of this section, we will provide a sequence of mutation with the aim of proving the following equivalence of categories:
\begin{equation}
\begin{split}
\left<D^b(G(3,V_5)), D^b(G(3,V_5))\otimes\mathcal O(1,1),  q^*D^b(Y)  \right>  \xrightarrow{\sim}\\ 
\left<D^b(G(2,V_5))  , D^b(G(2,V_5))\otimes\mathcal O(1,1), \Phi D^b(Y)\right>
\end{split}
\end{equation}
where $\Phi$ is a functor given by a composition of mutations. That would prove an equivalence between this last exceptional collection and (\ref{orlov}), thus proving that $D^b(X)\cong D^b(Y)$.\\
\\
Exceptional collections for Grassmannians and flag varieties have been described by Kapranov in \cite{Kapranov}, where a method to construct them has been given in terms of Schur functors of the universal bundle, but we will use the minimal Lefschetz decomposition for $G(2,V_5)$ introduced by Kuznetsov in \cite{KuznetsovIsotropicLines}. The advantage of this collection, which can be recovered from the Kapranov one with a sequence of mutations as explained in \cite{KuznetsovIsotropicLines}, is that it generates a very simple helix involving only twists of two vector bundles. The collection is the following:
\begin{equation}\label{kuznetsov25}
D^b G(2,V_5) = \left<\mathcal O, \mathcal U_2^\vee,\mathcal O(1), \mathcal U_2^\vee (1),\mathcal O(2), \mathcal U_2^\vee (2),\mathcal O(3), \mathcal U_2^\vee (3),\mathcal O(4), \mathcal U_2^\vee (4)\right>
\end{equation}
\noindent The duality isomorphism between $G(2,V_5)$ and $G(3,V_5)$ exchanges $\mathcal U_2^\vee $ with $\mathcal Q_3$ and allows us to write a minimal Lefschetz exceptional collection for $G(3,V_5)$:
\begin{equation}\label{kuznetsov35}
D^b G(3,V_5) = \left<\mathcal O, \mathcal Q_3,\mathcal O(1), \mathcal Q_3(1),\mathcal O(2), \mathcal Q_3(2),\mathcal O(3), \mathcal Q_3(3),\mathcal O(4), \mathcal Q_3(4)\right>.
\end{equation}
Now, before venturing in the computation of the mutations which will lead to the derived equivalence, let us prove some useful cohomology calculations:
\begin{lemm}\label{vanishingQO}
The following relation holds for non negative integers $a, b$ which satisfy $2+a\leq b \leq 7+a$ except for $b = 3+a$:
$$
\operatorname{Ext}^\bullet(\mathcal Q_3(1,b), \mathcal O(2,2+a)) = 0
$$
\end{lemm}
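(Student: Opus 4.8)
The plan is to reduce the computation of $\operatorname{Ext}^\bullet(\mathcal Q_3(1,b), \mathcal O(2,2+a))$ on the flag variety $F$ to a cohomology computation of a Schur functor of the tautological bundles on $G(3,V_5)$, which can then be read off from the Borel--Weil--Bott theorem. Since $F$ is the projectivization of a bundle over $G(3,V_5)$ (by Lemma \ref{thepushforwards}, $F = \mathbb{P}(\mathcal U_3(2))$ as a $\mathbb{P}^2$-bundle with relative $\mathcal O_{F_2}(1) = \mathcal O(1,1)$), the first step is to push the computation down to $G(3,V_5)$ via the projection $q$. Using $\operatorname{Ext}^\bullet(\mathcal Q_3(1,b), \mathcal O(2,2+a)) \cong H^\bullet(F, \mathcal Q_3^\vee \otimes \mathcal O(1, 2+a-b))$, I would apply the projection formula together with the relative Euler sequence to express $q_* \mathcal O(0,k)$ for the relevant exponent $k = 2+a-b$ in terms of symmetric powers of $\mathcal U_3(2)$, i.e. $q_*\mathcal O(0,j)$ involves $\operatorname{Sym}^j(\mathcal U_3^\vee(-2))$-type data on $G(3,V_5)$, while the $\mathcal O(1,0)$-twist contributes a $q^*\mathcal O_{G(3)}(1)$ factor.

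\textbf{Key steps.} After pushing forward, the computation becomes $H^\bullet(G(3,V_5), \mathcal Q_3^\vee \otimes \mathcal O(1) \otimes R^\bullet q_* \mathcal O(0,k))$, which decomposes into a finite sum of terms of the form $H^\bullet(G(3,V_5), \mathcal Q_3^\vee \otimes \operatorname{Sym}^\bullet(\cdots) \otimes \mathcal O(\bullet))$. Each such term is a Schur functor $\Sigma^\lambda \mathcal U_3 \otimes \Sigma^\mu \mathcal Q_3$ twisted by a power of the determinant, so the second step is to identify the relevant weight $\lambda = (\lambda_1, \dots, \lambda_5)$ attached to each summand. The third step is to apply Borel--Weil--Bott: one adds the half-sum $\rho = (4,3,2,1,0)$ to the weight, checks whether the result is regular (all entries distinct), and if so counts the number of inversions to locate the single nonzero cohomology degree. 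The desired vanishing will follow precisely when every weight arising in the range $2+a \le b \le 7+a$, excluding $b = 3+a$, turns out to be \emph{singular} (has a repeated entry after adding $\rho$), forcing the cohomology to vanish in all degrees.

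\textbf{Main obstacle.} The hard part will be organizing the bookkeeping so that every weight $\lambda$ appearing across all the summands from the symmetric-power expansion of $R^\bullet q_* \mathcal O(0,k)$ — over the full admissible range of $b$ — is verified to be singular, while confirming that the single excluded value $b = 3+a$ is exactly the one producing a regular (hence nonvanishing) weight. In other words, the delicate point is that the excluded case $b=3+a$ must correspond to the unique weight that survives, and the boundary cases $b = 2+a$ and $b = 7+a$ must land just inside the vanishing region; I expect that the restriction $2+a \le b \le 7+a$ is dictated precisely by where the added-$\rho$ weights remain singular, so the crux is a careful case analysis of the partition entries rather than any conceptual difficulty. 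I would double-check the endpoints by hand, since off-by-one errors in the Bott chamber walls are the most likely source of mistakes.
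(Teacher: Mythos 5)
Your high-level plan --- rewrite the Ext group as $H^\bullet(F,\mathcal Q_3^\vee\otimes\mathcal O(1,2+a-b))$, push down to $G(3,V_5)$ along $q$ via the projection formula and Leray, and invoke Borel--Weil--Bott in the form ``a repeated entry after adding $\rho$ forces vanishing in all degrees, with $b=3+a$ the unique regular case'' --- is exactly the paper's proof. But your execution of the pushforward contains a genuine error: you have the roles of the two line bundle factors backwards. On $F$, the bundle $\mathcal O(0,k)$ \emph{is} the pullback $q^*\mathcal O_{G(3,V_5)}(k)$; it is trivial on the fibers of $q$, so $q_*\mathcal O(0,k)=\mathcal O(k)$ and $R^{>0}q_*\mathcal O(0,k)=0$ for either sign of $k$ --- there is no symmetric-power expansion and there are no higher direct images. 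The factor that interacts with the projectivization structure is $\mathcal O(1,0)=p^*\mathcal O_{G(2,V_5)}(1)$, which is \emph{not} a $q$-pullback: since the relative Grothendieck bundle of $q$ is $\mathcal O(1,1)$ (Lemma \ref{thepushforwards}), $\mathcal O(1,0)$ restricts to $\mathcal O_{\mathbb P^2}(1)$ on each $q$-fiber, and $q_*\mathcal O(1,0)=\wedge^2\mathcal U_3^\vee=\mathcal U_3(1)$. Consequently the pushforward is the single bundle $q_*\bigl(\mathcal Q_3^\vee\otimes\mathcal O(1,2+a-b)\bigr)=\mathcal U_3\otimes\mathcal Q_3^\vee\otimes\mathcal O(3+a-b)$, with no sum to organize; this is what the paper computes before applying Borel--Weil--Bott.

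The swap is not a harmless relabeling: carried out as written, your computation fails already at the endpoint $b=2+a$ (i.e.\ $k=0$). There your recipe would give $H^\bullet\bigl(G(3,V_5),\mathcal Q_3^\vee\otimes\mathcal O(1)\bigr)$, and since $\mathcal Q_3^\vee(1)\cong\mathcal Q_3$ (rank two with determinant $\mathcal O(1)$), this contains $H^0(G(3,V_5),\mathcal Q_3)=V_5\neq 0$, contradicting the vanishing you are trying to prove. With the correct pushforward, writing $t=3+a-b$ and using the convention in which $\Sigma^\alpha\mathcal Q_3\otimes\Sigma^\beta\mathcal U_3$ is governed by the weight $(\alpha,\beta)+\rho$ with $\rho=(4,3,2,1,0)$, the bundle $\mathcal U_3\otimes\mathcal Q_3^\vee\otimes\mathcal O(t)=\Sigma^{(t,t-1)}\mathcal Q_3\otimes\Sigma^{(1,0,0)}\mathcal U_3$ yields the weight $(t+4,t+2,3,1,0)$, which has a repeated entry precisely for $b-a\in\{2,4,5,6,7\}$ and is regular exactly at $b=3+a$ --- the statement of the lemma. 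So your strategy, and even your diagnosis of the ``main obstacle'' (checking singularity of all weights, with $b=3+a$ the surviving case), are correct, but the bookkeeping you flagged as delicate collapses to a one-line weight computation once the fiber degrees are attached to the correct factors.
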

\begin{proof}
The proof is merely an application of Borel--Weil--Bott theorem, in particular, we are interested in understanding on which conditions on $a$ and $b$ we can obtain $H^0(F, \mathcal Q_3^\vee (1,2+a-b))=0$.\\
\\
Due to the Leray spectral sequence, our problem simplifies to showing that the pushforward of this bundle with respect to one of the two projections from the flag has no cohomology.\\
Namely, due to the projection formula, we have:
\begin{align*}
q_*\mathcal Q_3^\vee (1,2+a-b)  &= \mathcal U_3(1)\otimes\mathcal Q_3^\vee (2+a-b)=\wedge^2\mathcal U_3^\vee\otimes\mathcal Q_3^\vee (2+a-b) \\
	&= \wedge^2\mathcal U_3\otimes \left(\wedge^3\mathcal U_3\right)^{\otimes(2+a)}\otimes\mathcal Q_3^\vee\otimes\left(\wedge^2\mathcal Q_3^\vee\right)^{\otimes b}
\end{align*}
The Bott-Weil theorem states that cohomology vanishes in every degree if two or more of the following integers coincide:
$$
9+a;\hspace{5pt} 8+a;\hspace{5pt} 5+a;\hspace{5pt} 3+b;\hspace{5pt} 1+b.
$$
and this completes the proof.
\end{proof}
\noindent A similar result can be obtained with the same argument:

\begin{lemm}\label{vanishingOO}
The following relation holds for non negative integers $a, b$ which satisfy $3+a\leq b \leq 7+a$:
$$
\operatorname{Ext}^\bullet(\mathcal O(1,b), \mathcal O(2,2+a)) = 0
$$
\end{lemm}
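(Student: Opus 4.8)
The plan is to proceed exactly as in the proof of Lemma \ref{vanishingQO}, since the statement here is the same kind of Borel--Weil--Bott vanishing but with $\mathcal O(1,b)$ in place of $\mathcal Q_3(1,b)$. First I would reduce $\operatorname{Ext}^\bullet(\mathcal O(1,b), \mathcal O(2,2+a))$ to the cohomology of a single line bundle on $F$, namely $H^\bullet(F, \mathcal O(1, 2+a-b))$, using that $\operatorname{Ext}^\bullet(\mathcal O(1,b),\mathcal O(2,2+a)) = H^\bullet(F, \mathcal O(1,2+a-b))$. The goal is to show this vanishes in all degrees under the stated numerical hypotheses $3+a\leq b\leq 7+a$.

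Next, I would push forward along the projection $q: F\to G(3,V_5)$ and apply the projection formula, just as in Lemma \ref{vanishingQO}. Writing $\mathcal O(1,0)=p^*\mathcal O(1)$ and $\mathcal O(0,1)=q^*\mathcal O(1)$, the relevant pushforward is
\begin{equation}
q_* \mathcal O(1, 2+a-b) = \mathcal U_3(1)\otimes \mathcal O_{G(3,V_5)}(2+a-b),
\end{equation}
since $p^*\mathcal O(1)$ restricted to the $\mathbb P^2$-fibers of $q$ gives the universal bundle $\mathcal U_3$ of $G(3,V_5)$ (compare Lemma \ref{thepushforwards}). I would then rewrite everything in terms of the bundles whose Schur functors Borel--Weil--Bott directly handles, using $\mathcal O(1)\cong \wedge^2\mathcal Q_3$ on $G(3,V_5)$ and $\mathcal O(1)\cong \wedge^3\mathcal U_3^\vee$, so that
\begin{equation}
q_*\mathcal O(1, 2+a-b) = \wedge^2\mathcal U_3^\vee \otimes (\wedge^3\mathcal U_3)^{\otimes(2+a)}\otimes(\wedge^2\mathcal Q_3^\vee)^{\otimes b}.
\end{equation}

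From here the computation is a direct invocation of Borel--Weil--Bott on $G(3,V_5)$: I would read off the weight associated to this bundle, add the half-sum of positive roots $\rho$, and observe that the cohomology vanishes in every degree precisely when the resulting weight has a repeated entry (i.e. is non-regular). Tracking the integers as in Lemma \ref{vanishingQO}, the weight components produce a short list of integers (depending affinely on $a$ and $b$), and I would check that the hypothesis $3+a\leq b\leq 7+a$ forces at least two of them to coincide. The mild novelty compared to Lemma \ref{vanishingQO} is that the $\mathcal U_3$-factor is simpler here (there is no extra $\mathcal Q_3^\vee$ twisting the symmetric part), which is why the excluded case $b=3+a$ of the previous lemma does not arise and the admissible range begins at $3+a$ rather than $2+a$. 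I do not expect a genuine obstacle: the only care needed is the bookkeeping of which two integers collide across the entire claimed range of $b$, ensuring no value of $b$ in $[3+a, 7+a]$ slips through with a regular weight.
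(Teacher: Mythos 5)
Your proposal is correct and is exactly the paper's intended argument: the paper proves Lemma \ref{vanishingOO} only by appeal to ``the same argument'' as Lemma \ref{vanishingQO}, which is what you carry out --- reduce to $H^\bullet(F,\mathcal O(1,2+a-b))$, push forward along $q$ via the projection formula to get $\mathcal U_3(1)\otimes\mathcal O(2+a-b)$ on $G(3,V_5)$, and apply Borel--Weil--Bott, where the shifted weight (in one standard normalization $(8+a-b,\,7+a-b,\,5+a-b,\,2,\,1)$) acquires a repeated entry precisely for $b\in\{3+a,\dots,7+a\}$, confirming both the stated range and your observation about why $b=3+a$ is no longer excluded while $b=2+a$ now is. One cosmetic slip, inherited from the paper's own display in Lemma \ref{vanishingQO}: the factor $(\wedge^3\mathcal U_3)^{\otimes(2+a)}$ should be $(\wedge^3\mathcal U_3^\vee)^{\otimes(2+a)}$, since $\mathcal O(1)=\wedge^3\mathcal U_3^\vee$ on $G(3,V_5)$; this does not affect the conclusion.
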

\noindent Another useful vanishing condition comes from the Leray spectral sequence:
\begin{lemm}\label{vanishingpushforward}
Let $\mathcal F$ and $\mathcal F'$ be vector bundles on $F$ such that they are pullbacks of vector bundles on the same Grassmannian. Then the following relation holds for every $a$, $b$, $c$, $d$ such that $d-b$ is either one or two:
\begin{equation}
\operatorname{Ext}^\bullet(\mathcal F(a,b), \mathcal F'(c,d)) = 0\\
\end{equation}
\end{lemm}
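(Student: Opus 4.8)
The plan is to reduce the Ext-computation to a cohomology vanishing on the $\mathbb{P}^2$-fibers of one of the two projections, via the projection formula together with the Leray spectral sequence. Since $\mathcal{F}$ and $\mathcal{F}'$ are pulled back from the same Grassmannian, I may assume without loss of generality that both come from $G(2,V_5)$ along $p$, say $\mathcal{F}=p^*\mathcal{G}$ and $\mathcal{F}'=p^*\mathcal{G}'$; the case of $G(3,V_5)$ and $q$ is verbatim the same, with the roles of $p$ and $q$ (and of the two index differences) interchanged. For vector bundles the Ext-groups are computed by sheaf cohomology, so
\[
\operatorname{Ext}^\bullet(\mathcal{F}(a,b),\mathcal{F}'(c,d))=H^\bullet\!\left(F,\,p^*(\mathcal{G}^\vee\otimes\mathcal{G}')(c-a,0)\otimes q^*\mathcal{O}(d-b)\right),
\]
so that the entire computation is governed by the single $q$-twist $m:=d-b$, the other twist being absorbed into the pullback factor.

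Next I would push forward along $p$. Since $p^*(\mathcal{G}^\vee\otimes\mathcal{G}')(c-a,0)$ is a pullback, the projection formula gives
\[
R p_*\!\left(p^*\mathcal{H}\otimes q^*\mathcal{O}(m)\right)\;\cong\;\mathcal{H}\otimes R p_*\,q^*\mathcal{O}(m),\qquad \mathcal{H}:=(\mathcal{G}^\vee\otimes\mathcal{G}')(c-a),
\]
reducing the statement to the vanishing of the complex $R p_*\,q^*\mathcal{O}(m)$ on $G(2,V_5)$. Here I invoke Lemma \ref{thepushforwards}: $F$ is the $\mathbb{P}^2$-bundle $\mathbb{P}(\mathcal{Q}_2^\vee(2))$ over $G(2,V_5)$ and $\mathcal{O}(1,1)$ is its relative Grothendieck bundle, so that $q^*\mathcal{O}(1)=\mathcal{O}(1,1)\otimes p^*\mathcal{O}(-1)$ restricts to $\mathcal{O}_{\mathbb{P}^2}(1)$ on each fiber of $p$, and hence $q^*\mathcal{O}(m)$ restricts to $\mathcal{O}_{\mathbb{P}^2}(m)$. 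The key elementary input is that $H^\bullet(\mathbb{P}^2,\mathcal{O}_{\mathbb{P}^2}(m))=0$ precisely for the two consecutive values at which $H^0$ has already died and $H^2$ has not yet appeared, i.e. $m\in\{-1,-2\}$. By cohomology and base change --- $p$ being flat with fiber cohomology of constant (zero) rank for such $m$ --- this fiberwise acyclicity upgrades to $R p_*\,q^*\mathcal{O}(m)=0$ as a complex on the base, and the Leray spectral sequence $H^i(G(2,V_5),R^j p_*(-))\Rightarrow H^{i+j}(F,-)$ then forces the total Ext-group to vanish.

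The one point requiring genuine care --- and the main potential obstacle --- is the sign and normalization bookkeeping for the relative hyperplane class: one must track through Lemma \ref{thepushforwards} exactly how $q^*\mathcal{O}(1)$ restricts to the fibers of $p$, so that the acyclic range $\{-1,-2\}$ of $\mathbb{P}^2$-cohomology is correctly matched against the admissible values of the index difference appearing in the hypothesis (equivalently $c-a$ in the $q$-case). It is exactly this $\mathbb{P}^2$-phenomenon --- cohomology of $\mathcal{O}(m)$ vanishing for only two values of $m$ --- that pins the lemma down to a two-integer window and makes the result robust in $\mathcal{G},\mathcal{G}'$: for very special $\mathcal{F},\mathcal{F}'$ (e.g. trivial bundles) a wider range of twists survives, but the bundle-independent statement is governed solely by the relative projective plane. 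Everything else in the argument is formal.
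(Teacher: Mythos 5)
Your proposal follows essentially the same route as the paper's proof: decompose the Hom-bundle as $p^*\left(\mathcal G^\vee\otimes\mathcal G'(c-a)\right)\otimes q^*\mathcal O(d-b)$, kill its derived pushforward along $p$ via the projection formula and the vanishing of $Rp_*q^*\mathcal O(m)$ for the two relevant twists, and conclude by Leray. The only difference is cosmetic: you justify the key vanishing by fiberwise $\mathbb{P}^2$-cohomology plus cohomology and base change, where the paper directly asserts that the pushforwards of $q^*\mathcal O(-1)$ and $q^*\mathcal O(-2)$ to $G(2,V_5)$ vanish.

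However, the point you defer as ``sign and normalization bookkeeping'' is not bookkeeping, and you should not leave it open: the acyclic range you correctly computed, $m=d-b\in\{-1,-2\}$, cannot be reconciled with the printed hypothesis $d-b\in\{1,2\}$ by any choice of conventions. For $d-b=1$ the literal statement is in fact false, since
\begin{equation*}
\operatorname{Ext}^\bullet(\mathcal O,\mathcal O(0,1))=H^\bullet\left(F,q^*\mathcal O_{G(3,V_5)}(1)\right)=H^\bullet\left(G(3,V_5),\mathcal O(1)\right)\neq 0 .
\end{equation*}
The resolution is that the statement of Lemma \ref{vanishingpushforward} carries a sign typo: the intended hypothesis is $b-d\in\{1,2\}$, i.e.\ $d-b\in\{-1,-2\}$. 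This is confirmed both by the paper's own proof (which invokes precisely $q^*\mathcal O(-1)$ and $q^*\mathcal O(-2)$, never $q^*\mathcal O(1)$ or $q^*\mathcal O(2)$) and by the way the lemma is applied in Proposition \ref{orthogonal}, where the collection is reordered by increasing second twist, so that the Ext-groups which must vanish have as source the object of \emph{larger} $q$-degree. So the correct move is to assert the typo and prove the corrected statement, for which your argument goes through verbatim. Keep also, and make explicit, the caveat you noted parenthetically: the condition on the second indices is tied to $\mathcal F,\mathcal F'$ being pullbacks along $p$ (which is the case needed in Proposition \ref{orthogonal}, and the only case the paper's proof actually treats, since it only pushes forward along $p$); for pullbacks along $q$ the same argument yields vanishing under the condition $a-c\in\{1,2\}$ on the first indices instead.
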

\begin{proof}
We observe that
\begin{align*}
\mathcal F^\vee(-a,-b)\otimes\mathcal F'(c,d) = p^*(\mathcal F^\vee\otimes\mathcal F'(c-a))\otimes q^*\mathcal O(d-b).
\end{align*}
The pushforwards of $q^*\mathcal O(-1)$ and $q^*\mathcal O(-2)$ to $G(2,V_5)$ have no cohomology, thus $\mathcal F^\vee(-a,-b)\otimes\mathcal F'(c,d)$ is acyclic. Due to the Leray spectral sequence we have
\begin{equation}
H^0(F,\mathcal F^\vee(-a,-b)\otimes\mathcal F'(c,d))=H^0(G(2,V_5),p_*\mathcal F^\vee(-a,-b)\otimes\mathcal F'(c,d))
\end{equation}
and this yields the desired result.
\end{proof}
\noindent The following lemmas provide some useful mutations which we will use in the further computations.
\begin{lemm}\label{mutationUQ}
We have the following mutation in the derived category of a Grassmannian $G(k,V_5)$ for every choice of the integers $a,b$:
$$L_{\mathcal O(a,b)}\mathcal U(a,b) =\mathcal Q(a,b)$$
\end{lemm}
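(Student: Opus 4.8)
The plan is to reduce the statement to the untwisted case and then realize the left mutation directly through the tautological exact sequence on the Grassmannian. Since left mutation through an exceptional object commutes with tensoring by a line bundle, one has $L_{\mathcal O(a,b)}\mathcal U(a,b)=\left(L_{\mathcal O}\mathcal U\right)(a,b)$, so it suffices to treat the case $a=b=0$ and then twist back at the end. Recall that the left mutation $L_{\mathcal O}(-)$ is defined by the evaluation triangle
$$\operatorname{Ext}^\bullet(\mathcal O,-)\otimes\mathcal O \xrightarrow{\ \mathrm{ev}\ } (-) \longrightarrow L_{\mathcal O}(-) \xrightarrow{\ +1\ }$$
so the computation reduces to understanding the cohomology $\operatorname{Ext}^\bullet(\mathcal O,-)=H^\bullet(G(k,V_5),-)$ together with the evaluation morphism.

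First I would record the two relevant Borel--Weil--Bott computations: $\operatorname{Ext}^\bullet(\mathcal O,\mathcal Q)\cong H^0(G(k,V_5),\mathcal Q)\cong V_5$ is concentrated in degree $0$, while $\operatorname{Ext}^\bullet(\mathcal O,\mathcal U)=0$ in every degree. These are exactly the inputs that control the two terms of the tautological short exact sequence
$$0\longrightarrow \mathcal U \longrightarrow V_5\otimes\mathcal O \longrightarrow \mathcal Q \longrightarrow 0.$$
The key step is then to recognize this sequence as the evaluation triangle of the mutation: using the identification $H^0(\mathcal Q)\cong V_5$, the middle term $V_5\otimes\mathcal O$ is canonically $\operatorname{Ext}^\bullet(\mathcal O,\mathcal Q)\otimes\mathcal O$, and the surjection onto $\mathcal Q$ is precisely $\mathrm{ev}$. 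Together with the vanishing $\operatorname{Ext}^\bullet(\mathcal O,\mathcal U)=0$, which forces the mutation to be concentrated in a single cohomological degree, this exhibits $\mathcal U$ and $\mathcal Q$ as the two objects interchanged by mutating through $\mathcal O$ via the trivial bundle $V_5\otimes\mathcal O$, yielding the claimed identity. Twisting by $\mathcal O(a,b)$ restores the general case.

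The step I expect to be the main obstacle is promoting the abstract cohomological isomorphism $H^0(\mathcal Q)\cong V_5$ to an identification of the actual \emph{morphisms} of sheaves: one must check that the canonical surjection $V_5\otimes\mathcal O\to\mathcal Q$ really coincides with the evaluation map of the mutation and not merely with some scalar rescaling of it. This is also where care with conventions is needed, since the precise cohomological shift in the definition of $L$ determines whether the kernel or the cokernel term of the tautological sequence is the mutated object. The vanishing $\operatorname{Ext}^\bullet(\mathcal O,\mathcal U)=0$ is precisely what removes any ambiguity here, guaranteeing that the evaluation triangle degenerates to the honest short exact sequence above and that no extra summands or shifts intervene.
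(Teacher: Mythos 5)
Your proposal is correct and takes essentially the same route as the paper: both arguments compute the relevant $\operatorname{Ext}$ groups by Borel--Weil--Bott and then identify the tautological sequence $0\to\mathcal U\to V_5\otimes\mathcal O\to\mathcal Q\to 0$ with the evaluation triangle of the mutation through $\mathcal O$, so that the mutated object is read off as the kernel of the surjection $V_5\otimes\mathcal O\to\mathcal Q$. The convention caveat you raise is exactly the right one, and it applies to the paper's own proof as well: what both arguments literally establish is $L_{\mathcal O}\mathcal Q\simeq\mathcal U[1]$ (equivalently $R_{\mathcal O}\mathcal U\simeq\mathcal Q[-1]$), and since $\operatorname{Ext}^\bullet(\mathcal O,\mathcal U)=0$ the identity as printed, $L_{\mathcal O}\mathcal U=\mathcal Q$, can only be read as the statement that mutation through $\mathcal O$ interchanges $\mathcal U$ and $\mathcal Q$ up to shift.
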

\begin{proof}
The following fact
\begin{equation}
\operatorname{Ext}^\bullet(\mathcal Q(a,b), \mathcal O(a,b)) = \mathbb C^n[0]
\end{equation}
follows from Borel--Weil--Bott theorem, it tells us that the mutation we are interested in is the cone of the morphism
\begin{equation}
V_5\otimes\mathcal O\longrightarrow \mathcal Q.
\end{equation}
From the universal sequence
\begin{equation}
0\longrightarrow\mathcal U\longrightarrow V_5\otimes\mathcal O\longrightarrow\mathcal Q\longrightarrow 0
\end{equation}
we see that the morphism is surjective, thus the cone yields the kernel $\mathcal U$.
\end{proof}
\begin{lemm}\label{getridoftilde}
In the derived category of $G(3,V_5)$ the following mutations can be performed:
\begin{align}
R_{\mathcal O(a+1, b-1)}\mathcal Q_3(a,b) = \mathcal Q_2(a,b)\\
R_{\mathcal O(a+1, b-1)}\mathcal U_3(a,b) = \mathcal U_2(a,b)
\end{align}
\end{lemm}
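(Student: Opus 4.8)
The plan is to reduce both identities to the single base case $(a,b)=(0,0)$ and then realise each right mutation as the (co)cone on a tautological short exact sequence on the flag $F$. Since right mutation through an exceptional object commutes with the autoequivalence $-\otimes\mathcal O(a,b)$, and $\mathcal O(1,-1)\otimes\mathcal O(a,b)=\mathcal O(a+1,b-1)$, it suffices to prove $R_{\mathcal O(1,-1)}\mathcal Q_3=\mathcal Q_2$ and $R_{\mathcal O(1,-1)}\mathcal U_3=\mathcal U_2$. The key geometric input is the identification $\mathcal O(1,-1)\cong\mathcal U_3/\mathcal U_2$: comparing first Chern classes in the tautological filtration $\mathcal U_2\subset\mathcal U_3\subset V_5\otimes\mathcal O_F$ gives $\det\mathcal U_2=\mathcal O(-1,0)$ and $\det\mathcal U_3=\mathcal O(0,-1)$, so the line bundle $L:=\mathcal U_3/\mathcal U_2$ has $\det L=\mathcal O(1,-1)$. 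This yields the two tautological sequences
$$0\to\mathcal O(1,-1)\to\mathcal Q_2\to\mathcal Q_3\to 0,\qquad 0\to\mathcal U_2\to\mathcal U_3\to\mathcal O(1,-1)\to 0,$$
the first obtained from $L\hookrightarrow\mathcal Q_2=V_5\otimes\mathcal O/\mathcal U_2$ with cokernel $\mathcal Q_3$.

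Next I would compute the relevant Ext-groups by pushing forward along the $\mathbb P^2$-bundle $q\colon F\to G(3,V_5)$. Since $\mathcal O(1,-1)$ is the relative hyperplane bundle of $q$, we have $Rq_*\mathcal O(1,-1)=q_*\mathcal O(1,-1)=\mathcal U_3$ with no higher direct images (consistent with Lemma \ref{thepushforwards}, where $q_*\mathcal O(1,1)=\mathcal U_3(2)$). As $\mathcal Q_3$ and $\mathcal U_3$ are pulled back from $G(3,V_5)$, the projection formula and the Leray spectral sequence give
$$\operatorname{Ext}^\bullet(\mathcal Q_3,\mathcal O(1,-1))=H^\bullet(G(3,V_5),\mathcal Q_3^\vee\otimes\mathcal U_3),\qquad \operatorname{Ext}^\bullet(\mathcal U_3,\mathcal O(1,-1))=H^\bullet(G(3,V_5),\mathcal U_3^\vee\otimes\mathcal U_3).$$
Here $\mathcal Q_3^\vee\otimes\mathcal U_3=\Omega^1_{G(3,V_5)}$, whose cohomology is $\mathbb C[-1]$ (by Hodge theory or Borel--Weil--Bott, since $h^{1,1}=1$), while $\mathcal U_3^\vee\otimes\mathcal U_3=\mathcal{E}nd(\mathcal U_3)$ has cohomology $\mathbb C[0]$ because $\mathcal U_3$ is a simple rigid homogeneous bundle. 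Thus $\operatorname{Ext}^\bullet(\mathcal Q_3,\mathcal O(1,-1))=\mathbb C[-1]$ and $\operatorname{Ext}^\bullet(\mathcal U_3,\mathcal O(1,-1))=\mathbb C[0]$.

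Finally I would match the canonical coevaluation of the mutation with the structural map of the corresponding sequence. In the $\mathcal U_3$ case the coevaluation $\mathcal U_3\to\operatorname{Ext}^\bullet(\mathcal U_3,\mathcal O(1,-1))^\vee\otimes\mathcal O(1,-1)=\mathcal O(1,-1)$ is a nonzero element of the one-dimensional space $\operatorname{Hom}(\mathcal U_3,\mathcal O(1,-1))$, hence a nonzero multiple of the tautological surjection $\mathcal U_3\twoheadrightarrow L$, whose cocone is exactly the kernel $\mathcal U_2$; this gives $R_{\mathcal O(1,-1)}\mathcal U_3=\mathcal U_2$. In the $\mathcal Q_3$ case the coevaluation becomes a nonzero map $\mathcal Q_3\to\mathcal O(1,-1)[1]$, i.e. a nonzero class in $\operatorname{Ext}^1(\mathcal Q_3,\mathcal O(1,-1))=\mathbb C$; this line is spanned by the extension class of the first tautological sequence, which is nonzero since $\mathcal Q_2$ is indecomposable (its pullback description $\mathcal Q_2=p^*\mathcal Q_2$ together with $p_*\mathcal O_F=\mathcal O$ forces $\operatorname{End}(\mathcal Q_2)=\mathbb C$). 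Hence the coevaluation is that extension class up to scalar, and its shifted cone is $\mathcal Q_2$, giving $R_{\mathcal O(1,-1)}\mathcal Q_3=\mathcal Q_2$. The main obstacle is precisely this last identification: one must verify that the abstract coevaluation coincides, up to a nonzero scalar, with the geometric map of the tautological sequence, which rests on the one-dimensionality of the Ext-space combined with the non-splitness of the extension.
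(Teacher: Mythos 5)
Your proposal is correct and takes essentially the same route as the paper: both proofs realise the two mutations via the tautological sequences $0\to\mathcal O(1,-1)\to\mathcal Q_2\to\mathcal Q_3\to 0$ and $0\to\mathcal U_2\to\mathcal U_3\to\mathcal O(1,-1)\to 0$, after computing $\operatorname{Ext}^\bullet(\mathcal Q_3(a,b),\mathcal O(a+1,b-1))=\mathbb C[-1]$ and $\operatorname{Ext}^\bullet(\mathcal U_3(a,b),\mathcal O(a+1,b-1))=\mathbb C[0]$. The only differences are cosmetic: the paper obtains these Ext groups directly from Borel--Weil--Bott instead of your pushforward along $q$ (which reduces to $H^\bullet(\Omega^1_{G(3,V_5)})$ and $H^\bullet(\mathcal{E}nd\,\mathcal U_3)$ and is equally valid), and the paper leaves implicit the identification of the coevaluation class with the non-split tautological extension, which you spell out.
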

\begin{proof}
With Borel--Weil--Bott theorem we can compute the following:
\begin{equation}
\operatorname{Ext}^\bullet(\mathcal Q_3(a,b), \mathcal O(a+1,b-1)) = \mathbb C[-1]
\end{equation}
so a mutation involving that Ext is an extension. The relevant exact sequence is
\begin{equation}\label{sequenceOQQ}
0\longrightarrow\mathcal O(1,-1)\longrightarrow\mathcal Q_2\longrightarrow\mathcal Q_3\longrightarrow 0,
\end{equation}
which can be found computing the rank one cokernel of the injection $\mathcal U_2\xhookrightarrow{}\mathcal U_3$, comparing the universal sequences of the two Grassmannians and applying the Snake Lemma, this proves our first claim.\\
In order to verify the second one, we write the sequence involving the injection between the universal bundles, which is
\begin{equation}\label{sequenceUUO}
0\longrightarrow\mathcal U_2\longrightarrow\mathcal U_3\longrightarrow\mathcal O(1,-1)\longrightarrow 0.
\end{equation}
The related Ext, in this case, is $\mathbb C[0]$, so the mutation is the cone of the relevant morphism, yielding the desired result.
\end{proof}
\noindent Now we are ready to introduce the following result, which is the key of the proof of the derived equivalence.
\begin{prop}\label{orthogonal}
Let $X$ and $Y$ the zero loci of the pushforwards of $s\in H^0(F,\mathcal O(1,1))$. Then the following functor is an equivalence of categories
\begin{equation}
\begin{split}
\left<D^b(G(3,V_5)), D^b(G(3,V_5))\otimes\mathcal O(1,1),  q^*D^b(Y)  \right>  \xrightarrow{\sim}\\ 
\left<D^b(G(2,V_5))  , D^b(G(2,V_5))\otimes\mathcal O(1,1), \Phi\circ q^* D^b(Y)\right>
\end{split}
\end{equation}
where $\Phi$ is a functor given by a composition of mutations.
\end{prop}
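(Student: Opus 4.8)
The plan is to start from the two semiorthogonal decompositions of $D^b(M)$ in (\ref{orlov}) and transform one into the other through an explicit chain of mutations, keeping careful track of where $q^*D^b(Y)$ lands. Concretely, I would replace the Kapranov collections for $G(2,V_5)$ and $G(3,V_5)$ by the minimal Lefschetz collections (\ref{kuznetsov25}) and (\ref{kuznetsov35}), so that at each step the two copies of $D^b(\mathrm{Gr})$ inside the decomposition are generated by twists of only two bundles, $\mathcal{O}$ and $\mathcal{U}_2^\vee$ (respectively $\mathcal{O}$ and $\mathcal{Q}_3$). On the flag $F$, both pullbacks sit inside $D^b(M)$ together with the extra factor coming from Orlov's theorem, and the whole computation takes place among twists $\mathcal{O}(a,b)$, $\mathcal{Q}_3(a,b)$, $\mathcal{U}_2(a,b)$ and their relatives.

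The key steps, in order, are the following. First, I would write out the full generating exceptional collection for $D^b(M)$ coming from the $G(3,V_5)$-side decomposition, using (\ref{kuznetsov35}) together with the Orlov factor $q^*D^b(Y)$. Second, using Lemma \ref{getridoftilde} I would replace the bundles $\mathcal{Q}_3(a,b)$ and $\mathcal{U}_3(a,b)$ by $\mathcal{Q}_2(a,b)$ and $\mathcal{U}_2(a,b)$ via the right mutations through the appropriate $\mathcal{O}(a+1,b-1)$; this is the step that converts the $G(3,V_5)$-flavoured collection into one phrased in $G(2,V_5)$-data. Third, I would use Lemma \ref{mutationUQ} to interchange $\mathcal{U}$ and $\mathcal{Q}$ where needed, and repeatedly invoke the vanishing Lemmas \ref{vanishingQO}, \ref{vanishingOO} and \ref{vanishingpushforward} to verify that the objects being mutated past each other are either orthogonal (so the mutation is trivial and they simply reorder) or linked by a one-dimensional $\operatorname{Ext}$ (so the mutation is a concrete cone or extension). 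The net effect is to slide $q^*D^b(Y)$ leftward through the $G(3,V_5)$-generators while converting those generators into the $G(2,V_5)$-Lefschetz collection, and to arrange the two copies $D^b(G(2,V_5))$ and $D^b(G(2,V_5))\otimes\mathcal{O}(1,1)$ in exactly the positions they occupy in the $G(2,V_5)$-side of (\ref{orlov}). The composite of all these mutations is the functor $\Phi$, and because every elementary mutation is an equivalence, $\Phi$ is an equivalence; comparing the result with the top line of (\ref{orlov}) then yields $D^b(X)\cong D^b(Y)$.

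The main obstacle, and the bulk of the work, is the bookkeeping in the middle step: one must order the generators so that at each mutation the relevant $\operatorname{Ext}^\bullet$ is controlled by one of the vanishing lemmas, and the twists $(a,b)$ must stay within the numerical ranges (for instance $2+a\le b\le 7+a$ with the exception $b\ne 3+a$ in Lemma \ref{vanishingQO}) for those lemmas to apply. The delicate point is that $q^*D^b(Y)$ must be commuted past a long string of exceptional objects, and I expect the orthogonality furnished by Lemma \ref{vanishingpushforward} — which kills $\operatorname{Ext}$ between pullbacks when the difference of the second twist is $1$ or $2$ — to be precisely what makes most of these commutations trivial, so that the nontrivial mutations are confined to the finitely many $\mathcal{U}$-to-$\mathcal{Q}$ and $\mathcal{Q}_3$-to-$\mathcal{Q}_2$ conversions governed by Lemmas \ref{mutationUQ} and \ref{getridoftilde}. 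Once the sequence of mutations is fixed and each step's $\operatorname{Ext}$ computation is checked against the lemmas, the equivalence follows formally; the remaining risk is purely combinatorial, namely ensuring that no step falls on a forbidden value such as $b=3+a$.
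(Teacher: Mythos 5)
Your proposal follows essentially the same route as the paper's proof: the same two Orlov decompositions of $D^b(M)$, the same minimal Lefschetz collections (\ref{kuznetsov25}) and (\ref{kuznetsov35}), and the same toolkit — Lemma \ref{getridoftilde} to trade $\mathcal{Q}_3$, $\mathcal{U}_3$ for $\mathcal{Q}_2$, $\mathcal{U}_2$, Lemma \ref{mutationUQ} for the $\mathcal{U}$/$\mathcal{Q}$ exchanges, and the vanishing Lemmas \ref{vanishingQO}, \ref{vanishingOO}, \ref{vanishingpushforward} to justify the reorderings — with the composite of mutations defining $\Phi$. What you leave as "bookkeeping" is indeed the bulk of the paper's argument (the explicit chain of collections, with the accumulated functors $\phi_1,\dots,\phi_4$ recording the mutations and twists applied to $q^*D^b(Y)$, which remains the last block throughout), but your plan identifies the correct steps in the correct order.
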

\begin{proof}
The idea of the proof is writing the collection for the hyperplane section in a way such that we can use our cohomology vanishing results to transport line bundles $\mathcal O(a+1,b-1)$ to the immediate right of $\mathcal Q_3(a,b)$, then use Lemma \ref{getridoftilde} to get rid of $\mathcal Q_3(a,b)$, thus transforming vector bundles on $G(2,V_5)$ to vector bundles on $G(3,V_5)$.
The exceptional collection for $M$ with the $G(3,V_5)$ description is the following:
\begin{align*}
&D^b(M)=\\
&\left<\mathcal O, \mathcal Q_3,\mathcal O(0,1), \mathcal Q_3(0,1),\mathcal O(0,2), \mathcal Q_3(0,2),\mathcal O(0,3), \mathcal Q_3(0,3),\mathcal O(0,4), \mathcal Q_3(0,4),\right.\\
&\left.
\mathcal O(1,1), \mathcal Q_3(1,1),\mathcal O(1,2), \mathcal Q_3(1,2),\mathcal O(1,3),  \mathcal Q_3(1,3),\mathcal O(1,4), \mathcal Q_3(1,4),\mathcal O(1,5), \mathcal Q_3(1,5),q^*D^b Y\right>
\end{align*}
Our first operation is sending the first five bundles to the end, they get twisted  with the anticanonical bundle of $M$, which, with the adjunction formula, can be shown to be $\omega_M=\mathcal O(2,2)$.
\begin{align*}
&D^b(M)=\\
&\left<\mathcal Q_3(0,2),\mathcal O(0,3), \mathcal Q_3(0,3),\mathcal O(0,4), \mathcal Q_3(0,4), \mathcal O(1,1), \mathcal Q_3(1,1),\mathcal O(1,2), \mathcal Q_3(1,2),\mathcal O(1,3),
\right.\\&\left.
\mathcal Q_3(1,3),\mathcal O(1,4), \mathcal Q_3(1,4),\mathcal O(1,5), \mathcal Q_3(1,5),\mathcal O(2,2), \mathcal Q_3(2,2),\mathcal O(2,3), \mathcal Q_3(2,3),\mathcal O(2,4), \phi_1 D^b Y\right>
\end{align*}
where we introduced the functor
\begin{equation}
\phi_1 = R_{\left<\mathcal O(2,2), \mathcal Q_3(2,2),\mathcal O(2,3), \mathcal Q_3(2,3),\mathcal O(2,4)\right>}\circ q^*
\end{equation}
Applying Lemma \ref{vanishingQO}, we observe that $\mathcal O(1,1)$  can be moved next to $\mathcal Q_3(0,2)$. Then we can use Lemma \ref{getridoftilde} sending $\mathcal Q_3(0,2)$ to $\mathcal Q(0,2)$. This can be done twice due to the invariance of the operation up to overall twists, yielding:
\begin{align*}
&D^b(M)=\\
&\left<\mathcal O(1,1),\mathcal Q_2(0,2),\mathcal O(0,3), \mathcal Q_3(0,3),\mathcal O(0,4), \mathcal Q_3(0,4), \mathcal Q_3(1,1),\mathcal O(1,2), \mathcal Q_3(1,2),\mathcal O(1,3),
\right.\\&\left.
\mathcal O(2,2),\mathcal Q_2(1,3),\mathcal O(1,4), \mathcal O(2,3),\mathcal Q_3(1,4),\mathcal O(1,5), \mathcal Q_3(1,5), \mathcal Q_3(2,2), \mathcal Q_3(2,3),\mathcal O(2,4), \phi_1 D^b Y\right>
\end{align*}
We are tempted to perform the same operation with $\mathcal Q_3(0,3)$ and $\mathcal O(1,2)$, but $\mathcal O(1,2)$ cannot pass through the bundles in between, since there are non-vanishing $\mathrm{Ext}$ involved. We can avoid the problem using the fact that $\mathcal Q_3(1,1)\cong \mathcal Q_3^\vee(1,2)$ and that we can mutate this last bundle in $\mathcal U_3^\vee(1,2)$ acting with $\mathcal O(1,2)$, due to the dual formulation of Lemma \ref{mutationUQ}.\\
Again, all these operations can be performed twice:
\begin{align*}
&D^b(M)=\\
&\left<\mathcal O(1,1),\mathcal Q_2(0,2),\mathcal O(0,3),\mathcal O(1,2),\mathcal Q(0,3),\mathcal O(0,4), \mathcal Q_3(0,4), \mathcal U_3^\vee(1,2), \mathcal Q_3(1,2),\mathcal O(1,3),
\right.\\&\left.
\mathcal O(2,2),\mathcal Q_2(1,3),\mathcal O(1,4), \mathcal O(2,3),\mathcal Q(1,4),\mathcal O(1,5), \mathcal Q_3(1,5), \mathcal U_3^\vee(2,3), \mathcal Q_3(2,3),\mathcal O(2,4), \phi_1 D^b Y\right>.
\end{align*}
Now, $\mathcal O(0,3)$ and $\mathcal U_3^\vee(1,2)$ qualify for a mutation of the type described in Lemma \ref{getridoftilde}, to get them closer to each other we observe that, due to Lemma \ref{vanishingOO}, the Ext between $\mathcal O(0,3)$ and $\mathcal O(1,2)$ vanishes, and, for a similar application of Borel--Weil--Bott theorem, also the Exts between $\mathcal U_3^\vee(1,2)$ and the two bundles at its left are zero. Applying the same sequence of mutations to the $(1,1)$-twist of these objects we get the following collection:
\begin{align*}
&D^b(M)=\\
&\left<\mathcal O(1,1),\mathcal Q_2(0,2),\mathcal O(1,2),\mathcal U_2(0,3),\mathcal U_2^\vee(1,2),\mathcal O(0,3),\mathcal O(0,4), \mathcal Q_3(0,4),  \mathcal Q_3(1,2),\mathcal O(1,3),
\right.\\&\left.
\mathcal O(2,2),\mathcal Q_2(1,3),\mathcal O(2,3), \mathcal U_2(1,4),\mathcal U_2^\vee(2,3),\mathcal O(1,4),\mathcal O(1,5), \mathcal Q_3(1,5),  \mathcal Q_3(2,3),\mathcal O(2,4), \phi_1 D^b Y\right>.
\end{align*}
Again, thanks to the dual formulation of Lemma \ref{mutationUQ}, $\mathcal O(1,3)$ can mutate $\mathcal Q_3(1,2)$ to $\mathcal U_3^\vee(1,3)$, so we can apply Lemma \ref{getridoftilde} to transform $\mathcal Q_3(0,4)$ in $\mathcal Q_2(0,4)$. But then $\mathcal O(1,3)$ ends up next to $\mathcal O(0,4)$, which is orthogonal to it by application of Lemma \ref{vanishingOO}, so they can be exchanged. Passing through $\mathcal Q_2(0,4)$ via Lemma \ref{mutationUQ} and mutating it to $\mathcal U_2(0,4)$, $\mathcal O(0,4)$ goes right next to $\mathcal U_3^\vee(1,3)$, which is mutated to $\mathcal U_2^\vee(1,3)$ by applying Lemma \ref{getridoftilde}.\\
Once we have done the same for the $(1,1)$-twists, we have transformed all the rank 2 and rank 3 vector bundles on $G(3,V_5)$ in vector bundles on $G(2,V_5)$.
What we still need to do is to remove the twists involving powers of the hyperplane class of $G(3,V_5)$ and, consequently, recognize an exceptional collection of $G(2,V_5)$ and its twist.
Removing all the duals we get the following result:
\begin{align*}
&D^b(M)=\\
&\left<\mathcal O(1,1),\mathcal Q_2(0,2),\mathcal O(1,2),\mathcal U_2(0,3),\mathcal U_2(2,2),\mathcal O(0,3),\mathcal O(1,3),\mathcal U_2(0,4),\mathcal U_2(2,3),\mathcal O(0,4),
\right.\\&\left.
\mathcal O(2,2),\mathcal Q_2(1,3),\mathcal O(2,3), \mathcal U_2(1,4),\mathcal U_2(3,3),\mathcal O(1,4),\mathcal O(2,4),\mathcal U_2(1,5),\mathcal U_2(3,4),\mathcal O(1,5),  \phi_1 D^b Y\right>.
\end{align*}
First we send $\mathcal O(1,1)$ to the end, then we use Lemma \ref{vanishingpushforward} to order the bundles by their power of the second twist:
\begin{align*}
&D^b(M)=\\
&\left<\mathcal Q_2(0,2),\mathcal O(1,2),\mathcal U_2(2,2),\mathcal O(2,2),\mathcal U_2(0,3),\mathcal O(0,3),\mathcal O(1,3),\mathcal U_2(2,3),\mathcal Q_2(1,3),\mathcal O(2,3),
\right.\\&\left.
\mathcal U_2(3,3),\mathcal O(3,3)\mathcal U_2(0,4),\mathcal O(0,4),\mathcal U_2(1,4),\mathcal O(1,4),\mathcal O(2,4),\mathcal U_2(3,4),\mathcal U_2(1,5),\mathcal O(1,5), \phi_2 D^b Y\right>
\end{align*}
where we defined
\begin{equation}
\phi_2 = R_{\mathcal O(3,3)}\circ\phi_1.
\end{equation}
Now we send the last 10 objects to the beginning and reorder again the collection with respect to the second twist, obtaining the following:
\begin{align*}
&D^b(M)=\\
&\left<\mathcal U_2(1,1),\mathcal O(1,1),\mathcal U_2(-2,2),\mathcal O(-2,2),\mathcal U_2(-1,2),\mathcal O(-1,2),\mathcal O(0,2),\mathcal U_2(1,2),\mathcal Q_2(0,2),\mathcal O(1,2),
\right.\\&\left.
\mathcal U_2(2,2),\mathcal O(2,2)\mathcal U_2(-1,3),\mathcal O(-1,3),\mathcal U_2(0,3),\mathcal O(0,3),\mathcal O(1,3),\mathcal U_2(2,3),\mathcal Q_2(1,3),\mathcal O(2,3), \phi_3 D^b Y\right>,
\end{align*}
where
\begin{equation}
\phi_3 = L_{\left<\mathcal U_2(3,3),\mathcal O(3,3),\mathcal U_2(0,4),\mathcal O(0,4),\mathcal U_2(1,4),\mathcal O(1,4),\mathcal O(2,4),\mathcal U_2(3,4),\mathcal U_2(1,5),\mathcal O(1,5)\right>}\circ\phi_2
\end{equation}
Now we observe that $\mathcal Q_2(0,2)$ is orthogonal to $\mathcal U_2(1,2)$, so they can be exchanged: this allows us to mutate $\mathcal Q_2(0,2)$ to $\mathcal U_2(0,2)$ sending it one step to the left. After doing the same thing with $\mathcal O(1,1)$--twists of these bundles, the last steps are tensoring everything with $\mathcal O(-2,-2)$ and sending the first two bundles to the end.\\
We get:
\begin{align*}
&D^b(M)=\\
&\left<\mathcal U_2(-4,0),\mathcal O(-4,0),\mathcal U_2(-3,0),\mathcal O(-3,0),\mathcal U_2(-2,0),\mathcal O(-2,0),\mathcal U_2(-1,0),\mathcal O(-1,0),\mathcal U_2(0,0),\mathcal O(0,0),
\right.\\&\left.
\mathcal U_2(-3,1),\mathcal O(-3,1),\mathcal U_2(-2,1),\mathcal O(-2,1),\mathcal U_2(-1,1),\mathcal O(-1,1),\mathcal U_2(0,1),\mathcal O(0,1), \mathcal U_2(1,1),\mathcal O(1,1),\phi_4 D^b Y\right>
\end{align*}
We defined the last functor
\begin{equation}
\phi_4 = \mathcal T(-2,-2)\circ R_{\left<\mathcal U_2(1,1),\mathcal O(1,1)\right>}\circ\phi_3
\end{equation}
where $\mathcal T(-2,-2)$ is the twist with $\mathcal O(-2,-2)$.\\
Now, if we observe the first half of the collection, we can recognize $D^bG(2,V_5)$: in fact, if we take the Kuznetsov collection (\ref{kuznetsov25}), we can transform $\mathcal U_2$ to $\mathcal U_2^\vee (-1)$ in every Lefschetz block. Then, acting repeatedly with the canonical bundle to send object from the end to the beginning of the collection, we get our result, once we define $\Phi\circ q^* = \phi_4$. 
\end{proof}
\noindent We have shown that both $D^b(X)$ and $\phi_4 D^b(Y)$ can figure as the last block of the first row in (\ref{orlov}), so, for the uniqueness of the orthogonal complement, there is an equivalence of categories
\begin{equation}
D^b(X)\rightarrow\phi_4 D^b(Y)
\end{equation}
Moreover, it is a known fact that the left and the right mutations define an action of the braid group on the set of exceptional collections: the right mutation provides an inverse for the left mutation, as explained, for example, in \cite{HoriIqbalVafa} and \cite{ShinderNotes}. Thus we deduce that the categories $D^b(Y)$ and $\phi_4 D^b(Y)$ are equivalent.\\
\\
Summing all up, the content of this section provides a proof for the following theorem:
\begin{thm}\label{thmDeq}
Let $X$ and $Y$ be dual Calabi--Yau threefolds in the sense of Definition \ref{dualitydefn}. Then they are derived equivalent.
\end{thm}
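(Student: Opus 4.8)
The plan is to realize both $D^b(X)$ and $D^b(Y)$ as the residual block of a semiorthogonal decomposition of $D^b(M)$, and then to connect the two decompositions by an explicit chain of mutations. Since $M$ is a hyperplane section of the flag $F$, and $F$ is simultaneously a projective bundle over $G(2,V_5)$ and over $G(3,V_5)$ with $X$ (respectively $Y$) being the locus over which the fibre jumps from $\mathbb P^1$ to $\mathbb P^2$, Orlov's theorem for such a degeneration yields the two decompositions recorded in (\ref{orlov}). Both express $D^b(M)$ as generated by two copies of a Grassmannian's derived category together with $p^*D^b(X)$, respectively $q^*D^b(Y)$. Because $X$ and $Y$ are dual in the sense of Definition \ref{dualitydefn}, they arise from a single $s\in H^0(F,\mathcal O(1,1))$, so this is precisely the situation of Proposition \ref{orthogonal}.

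First I would fix the minimal Lefschetz collections (\ref{kuznetsov25}) and (\ref{kuznetsov35}) for the two Grassmannians and write out the full exceptional collection for $D^b(M)$ coming from the $G(3,V_5)$-decomposition, keeping $q^*D^b(Y)$ as the final block. The aim, supplied by Proposition \ref{orthogonal}, is to transform this collection by mutations into the one underlying the $G(2,V_5)$-decomposition, so that the residual block becomes $\Phi\circ q^*D^b(Y)$ for a composite functor $\Phi$ built from mutations.

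The core of the argument, and the main obstacle, is carrying out this mutation chain. The guiding idea is to move each line bundle $\mathcal O(a+1,b-1)$ so that it sits immediately to the right of the corresponding $\mathcal Q_3(a,b)$, whereupon Lemma \ref{getridoftilde} converts $\mathcal Q_3(a,b)$ into a bundle on $G(2,V_5)$; the intermediate transpositions are licensed by the vanishing results of Lemmas \ref{vanishingQO}, \ref{vanishingOO} and \ref{vanishingpushforward}, while the passages $\mathcal U\leftrightarrow\mathcal Q$ are handled by Lemma \ref{mutationUQ}. The delicate points are the configurations where a line bundle cannot pass a block because of a nonvanishing $\operatorname{Ext}$; these are circumvented by rewriting $\mathcal Q_3(1,1)\cong\mathcal Q_3^\vee(1,2)$ and mutating to $\mathcal U_3^\vee(1,2)$ instead. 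After repeatedly reordering by the weight of the second twist and clearing all $G(3,V_5)$-bundles and duals, one must recognize the first two blocks as $D^b(G(2,V_5))$ and $D^b(G(2,V_5))\otimes\mathcal O(1,1)$, which requires converting each $\mathcal U_2$ into $\mathcal U_2^\vee(-1)$ and realigning the Lefschetz blocks against (\ref{kuznetsov25}).

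Finally, both $p^*D^b(X)$ and $\Phi\circ q^*D^b(Y)$ now appear as the orthogonal complement of the same subcategory $\langle D^b(G(2,V_5)), D^b(G(2,V_5))\otimes\mathcal O(1,1)\rangle$ inside $D^b(M)$; by uniqueness of the orthogonal complement this produces an equivalence $D^b(X)\simeq\Phi\circ q^*D^b(Y)$. Since left and right mutations are mutually inverse and generate the braid-group action on exceptional collections, $\Phi$ is an equivalence onto its image, so $\Phi\circ q^*D^b(Y)\simeq D^b(Y)$, and we conclude that $D^b(X)\simeq D^b(Y)$.
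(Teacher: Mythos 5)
Your proposal follows essentially the same route as the paper's own proof: Orlov's decompositions (\ref{orlov}), the mutation chain of Proposition \ref{orthogonal} built from Lemmas \ref{vanishingQO}, \ref{vanishingOO}, \ref{vanishingpushforward}, \ref{mutationUQ} and \ref{getridoftilde} (including the $\mathcal Q_3(1,1)\cong\mathcal Q_3^\vee(1,2)$ workaround), and the conclusion via uniqueness of the orthogonal complement together with invertibility of mutations. The argument is correct and matches the paper's proof in both structure and detail.
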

\section{The GLSM construction}\label{section GLSM}
\noindent In this final section we will give a GLSM realization of dual pairs $(X,Y)$ of Calabi--Yau threefolds in $\bar {\mathcal{X}}_{25}$. Namely, we will construct a gauged linear sigma model with two Calabi--Yau phases associated to different chambers of the space of the stability parameter such that the critical loci are dual threefolds $Y$ and $X$.\\
The mathematical description of the GLSM we will use throughout this work is due to Okonek, to whom we are very thankful for his insights, while a thorough exposition of the subject has been given by Fan, Jarvis and Ruan in \cite{FanJarvisRuan}. In their work, as an example, a similar construction of the Grassmannian $G(k,n)$ as a GIT quotient with respect to $GL(k,\mathbb C)$ has been constructed, and the GLSM of a section of $\bigoplus_j\mathcal O(d_j)$ has been investigated, giving a formal definition of the critical loci in both the phases appearing in the model.
\begin{defn}
Let $V$ be a vector space endowed with the action of a reductive group $G$. We call \textit{gauged linear sigma model} the data of a $G$-invariant function
\begin{equation}
\begin{tikzcd}
V\arrow{r}{w} & \mathbb C.
\end{tikzcd}
\end{equation}
called \emph{superpotential}. Furthermore, we define \emph{critical locus} associated to the superpotential $w$ the following variety:
\begin{equation}
\operatorname{Crit}(w) = Z(dw).
\end{equation}
Fixed a character
\begin{equation}
\begin{tikzcd}
G\arrow{r}{\rho} & \mathbb C^* \\
g\arrow[mapsto]{r} &  \rho g
\end{tikzcd}
\end{equation}
the notion of semistability
\begin{equation}
V^{ss}_\rho=\{v\in V : \overline{\{(\rho^{-1} g, gv)|g\in G\}}\cap\{0\}\times V=\emptyset\}
\end{equation}
allows us to define the \emph{vacuum manifold} as a GIT quotient:
\begin{equation}
\mathcal M_\rho=\operatorname{Crit}(w)\git_\rho G.
\end{equation}
\end{defn}
\noindent The notion of phase transition is encoded in the variation of stability conditions: namely, changing the character $\rho$ leads to different vacuum manifolds. According to the theory of stability conditions, the regions of the space of characters characterized by the same GIT quotients are called \textit{chambers}, thus the problem of phase transitions of a GLSM is interpreted as a problem of wall crossing.
\begin{ex}
Let $G$ be a reductive group, $\mathcal E = \mathcal P\times_G \mathcal F$ a vector bundle with base $\mathcal B  = \mathcal P/G$ and $s\in H^0(\mathcal B, \mathcal E)$. Given a $G$-module $\mathcal U$ containing $\mathcal P$ with $\text{cod}_{\mathcal U}(\mathcal U\backslash \mathcal P)\geq 2$, we define the \textit{gauged linear sigma model} of $s$ to be a map
\begin{equation}
\begin{tikzcd}
\mathcal U\times\mathcal F^\vee\arrow{r}{\check{s}} & \mathbb C \\
(u,\lambda)\arrow[mapsto]{r} &  \lambda\cdot\hat{s}(u)
\end{tikzcd}
\end{equation}
where the function
\begin{equation}
\begin{tikzcd}
\hat{s}:\mathcal U\arrow{r} & \mathcal F
\end{tikzcd}
\end{equation}
is completely defined by $s$, namely by the requirement of satisfying the $G$-equivariancy condition
\begin{equation}
s([p])=[p,\hat{s}(p)].
\end{equation}
and, then, extended uniquely to $\mathcal U$, which is always possible as long as the above condition on the codimension is fulfilled.
\end{ex}
\noindent By asumption there exists a character $\rho$ whose $G$-semistability condition on $\mathcal U\times\mathcal F^\vee$ has semistable locus  $V^{ss}_{\rho_0}=\mathcal P\times\mathcal F^\vee$. In this case, the critical locus of the superpotential will be determined by the following. 
\begin{lemm}[Okonek's lemma]\label{okonekslemma}
Let $\check{s}$ be a superpotential defined by a regular section $s\in H^0(\mathcal B, \mathcal E)$. Then the following isomorphism holds:
\begin{equation}
\operatorname{Crit}(\check{s}) \cong Z(\hat s). 
\end{equation}
\end{lemm}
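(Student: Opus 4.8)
The plan is to reduce the statement to an explicit description of the vanishing locus of $d\check s$ and then to eliminate the fibre variable using regularity of the section. First I would exploit the product structure: a point of the total space is a pair $(u,\lambda)\in\mathcal U\times\mathcal F^\vee$, and since $\check s(u,\lambda)=\lambda\cdot\hat s(u)$ is linear in $\lambda$ and depends on $u$ only through $\hat s$, the differential splits along the two factors. Computing the two partial differentials gives
$$
d_\lambda\check s(u,\lambda)=\hat s(u),\qquad d_u\check s(u,\lambda)=\lambda\circ d\hat s_u,
$$
where $\hat s(u)$ is read as an element of $(\mathcal F^\vee)^\vee=\mathcal F$ and $\lambda\circ d\hat s_u$ is the composition of the Jacobian $d\hat s_u\colon T_u\mathcal U\to\mathcal F$ with the covector $\lambda$. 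Hence
$$
\operatorname{Crit}(\check s)=Z(d\check s)=\{(u,\lambda):\hat s(u)=0,\ \lambda\circ d\hat s_u=0\}.
$$

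The second step is to remove the fibre coordinate $\lambda$. On the locus where the first equation $\hat s(u)=0$ holds I would use that $s$, and therefore its lift $\hat s$, is a regular section: transversality of $s$ to the zero section means precisely that $d\hat s_u\colon T_u\mathcal U\to\mathcal F$ is surjective at every $u\in Z(\hat s)$. A surjective linear map admits no nonzero covector vanishing on its image, so the condition $\lambda\circ d\hat s_u=0$ forces $\lambda=0$. This identifies $\operatorname{Crit}(\check s)$ with $Z(\hat s)\times\{0\}$, and projection to the first factor yields the desired isomorphism $\operatorname{Crit}(\check s)\cong Z(\hat s)$. I would also record that this isomorphism is $G$-equivariant, since both sides are cut out by $G$-invariant equations and the zero section $\lambda=0$ is $G$-stable; this is what allows it to descend to GIT quotients, identifying the vacuum manifold $\mathcal M_\rho$ with $Z(\hat s)\git_\rho G$ as the GLSM interpretation requires.

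The point to handle with care is the passage from regularity of $s$ on the base $\mathcal B$ to surjectivity of $d\hat s_u$ on the affine model $\mathcal U$. Here I would use that $\hat s$ agrees with the lift of $s$ on $\mathcal P$ and that $\mathcal P\subset\mathcal U$ has complement of codimension at least two; unwinding the identification $\mathcal E=\mathcal P\times_G\mathcal F$ then turns the vertical surjectivity of $ds$ along $Z(s)$ into surjectivity of $d\hat s_u$ at the corresponding points of $Z(\hat s)\cap\mathcal P$. I expect this bookkeeping---making the equivariant trivialisation compatible with the differential and checking that the extension of $\hat s$ across $\mathcal U\setminus\mathcal P$ introduces no spurious critical points---to be the only genuinely delicate part, the rest being the formal bilinear computation above. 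Finally, since regularity also guarantees that $Z(\hat s)$ is reduced of the expected dimension, the set-theoretic identification upgrades automatically to an isomorphism of schemes.
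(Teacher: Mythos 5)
Your proposal is correct and follows essentially the same route as the paper's own proof: compute the critical equations $\hat s(u)=0$, $\lambda\cdot d\hat s_u=0$, then use regularity of $s$ (hence of $\hat s$ on the relevant locus) to conclude the Jacobian has maximal rank and so $\lambda=0$. The paper's version is terser, while you additionally spell out the splitting of the differential, the descent of regularity from $\mathcal B$ to $\mathcal U$, and the $G$-equivariance needed for the GIT quotient — all consistent with what the paper leaves implicit.
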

\begin{proof}
By definition, we have
\begin{equation}
Z(d\check{s})=\{(u,\lambda)\in\mathcal E^\vee : \hat s(u)=0, \lambda\cdot d\hat s(u)=0 \}.
\end{equation}
Since $s$ is a regular section, then $\hat s$ is regular. Then, since its Jacobian $d\hat s$ has maximal rank, $\lambda\cdot d\hat s(u)=0$ if and only if $\lambda = 0$.
\end{proof}
\noindent Then the vacuum manifold will be the GIT quotient of the zero locus of $\hat s$ with respect to $G$. This, in turn, gives
\begin{equation}
\mathcal M_{\rho_0} = Z(s).
\end{equation}
We observe that this construction can be used to realize the zero locus of a section of a homogeneous vector bundle as a phase of a GLSM, provided a family of characters such that the GIT quotient with respect to a given chamber yields the right subset of the vector space $\mathcal U\times\mathcal F^\vee$.\\
\\
\noindent Varying the character $\rho$ leads to different semistable loci, which, in turn, define different GIT quotients. These are called \emph{phases} of the physical theory. An interesting physical problem is to discuss \textit{phase transitions} of a gauged linear sigma model, which means wall-crossing between different chambers.\\
\\
In the following, we will present our GLSM construction leading to the varieties discussed above. First, we will give the following characterization of the bundle $\mathcal U_3(2)$ over $G(3,V_5)$:
\begin{equation}
\begin{tikzcd}
&\hspace{-15pt}\mathcal U_3(2)=\frac{\Hom(\mathbb C^3, V_5)\backslash\{\operatorname{rk}<3\}\times\mathbb C^3}{GL(3,\mathbb C)}\arrow{dd}&\hspace{-20pt}\ni&\hspace{-20pt}(B,v)\sim(Bg^{-1}, \det g^{-2}g v)\\ 
&&&\\
&\hspace{-45pt}G(3,V_5)=\frac{\Hom(\mathbb C^3, V_5)\backslash\{\operatorname{rk}<3\}}{GL(3,\mathbb C)}\arrow[bend right=30,swap]{uu}{s}&\hspace{-20pt}\ni&\hspace{-60pt}B\sim Bg^{-1}.
\end{tikzcd}
\end{equation}
In this setting, chosen a rank three $5\times 3$ matrix $B$, the section $s$ is defined by the following:
\begin{equation}\label{thesection}
s(B) \hspace{5pt}=\hspace{5pt} \left(B, \hspace{5pt}\hat s_1(B) b_1+ s_2(B) b_2+ s_3(B) b_3\right),
\end{equation}
where $b_i$ are the three columns of $B$. Thus, in order to respect the expected degree, $\hat s$ must be a vector of three quintics in the entries of $B$. In this way we have defined the image of $s$ in $V_5\otimes\mathcal O(2)$. In particular, since $\mathcal U_3(2) = q_*\mathcal O(1,1)$, the quintics $\hat s_i(B)$ will be such that the second coordinate in (\ref{thesection}) will be a vector of five polynomials which are quadratic in the $3\times 3$ minors of $B$. 
Moreover, we see that $s$ extends to a map
\begin{equation}
\begin{tikzcd}[row sep=small]
\Hom(\mathbb C^3, V_5)\arrow{rr}{s}&&\Hom(\mathbb C^3, V_5)\times\mathbb C^3\\
 B\arrow[maps to, shorten <= 2.3em, shorten >= 2.2em]{rr} && (B, \hat s(B)).
\end{tikzcd}
\end{equation}
From the definition of $\hat s$ we construct the following superpotential:
\begin{equation}
\begin{tikzcd}[row sep=small]
\Hom(\mathbb C^3, V_5)\times(\mathbb C^3)^\vee\arrow{rr}{s}&&\mathbb C\\
 B,\omega\arrow[maps to, shorten <=3.2em, shorten >= -2.1em]{rr} &&\hspace{25pt} \omega\cdot\hat s(B)
\end{tikzcd}
\end{equation}
Note that this formulation of a GLSM fits into the physical description of \cite{HoriTong}. In particular, the choice of a superpotential of the form given by \cite[(2.6)]{HoriTong} can be written, in physical terms, as
\begin{equation}
W = \int d^2\theta \operatorname{Tr}(PB\hat{s}(B)),
\end{equation}
where $\omega = PB$ and $P_1,\dots P_5$ are superfields transforming as $P\mapsto\det g^2 P$ under the gauge group, which is $U(3)$, and the integration is on two fermionic coordinates of the superspace.\\
\\
Now, let $\rho_\tau$ be the character defined by $\rho_\tau(g)=\det g^{-\tau}$. This leads to two different chambers in the space of stability conditions.
\subsubsection{The chamber $\tau>0$}
A pair $(B,\omega)$ is stable if there are no sequences $\{g_n\}$ satisfying \begin{equation}
\lim_{n\to\infty}\det g_n = 0
\end{equation}
such that the sequence $\{(B g_n^{-1},\det (g_n)^2\omega g_n^{-1})\}$ has a limit. We observe that the term $B g_n^{-1}$ will always diverge in the limit, unless $B$ has not maximal rank. In this latter case, it will be possible to choose a sequence $g_n$ such that $g_n^{-1}$ has no limit, but $B g_n^{-1}$ is finite. Since $\det (g_n)^2\omega g_n^{-1}$ is always finite, we get no further condition on $\omega$.\\
\\
Thus the GIT quotient relative to the chamber $\tau>0$ will define the bundle $\mathcal U_3^\vee(-2)$ over $G(3,V_5)$ and the vacuum manifold, due to Lemma \ref{okonekslemma}, is isomorphic to the Calabi--Yau threefold $Y=Z(s)$.
Moreover, being the superpotential $\check s$ $G$-invariant,the map $\check s_+$ in Diagram (\ref{s+}) is well defined:
\begin{equation}\label{s+}
\begin{tikzcd}
\mathcal U_3^\vee(-2)\arrow{rr}{\check s_+}\arrow{dd} & & \mathbb C &&\mathcal U_3(2)\arrow{dd}\\
&&&&\\
\hspace{-25pt}Z(s)\subset G(3,V_5)&&&& G(3,V_5).\arrow[bend right= 30,swap]{uu}{s}
\end{tikzcd}
\end{equation} 
\subsubsection{The chamber $\tau<0$}
Here, in order to achieve semistability, we need to test our pairs $(B,\omega)$ with sequences $g_n$, where $\det g_n$ tends to infinity. In this setting, we claim that the semistable locus is given by the following set:
\begin{equation}\label{semistablelocus-}
V^{ss}_- = \{(B,\omega)\in \Hom(\mathbb C^3, V_5)\times (\mathbb C^\vee)^3 : \omega \neq 0, \ker\omega\cap\ker B = 0 \}.
\end{equation}
First of all, the case $\omega=0$ is ruled out by the fact that there always exist a sequence $\{g_n\}$ with $\det g_n \to\infty$ such that $(Bg_n^{-1},0)$ has a limit. Thus, let us suppose $\omega\neq 0$.
To show that the set described in (\ref{semistablelocus-}) contains the semistable locus, let us suppose $\ker\omega\cap\ker B$ is non trivial.
Then we fix a basis of $V_5$ and $\mathbb{C}^3$, where
\begin{equation}
B =\left(\begin{matrix}
0 & b_{12} & b_{13}\\
0 & b_{22} & b_{23}\\
0 & b_{32} & b_{33}\\
0 & b_{42} & b_{43}\\
0 & b_{52} & b_{53}\\
\end{matrix}\right);\hspace{5pt}\omega =\left(\begin{matrix}
0 & \omega_{2} & \omega_{3}\\
\end{matrix}\right).
\end{equation}
We can then exhibit a sequence $\{g_n\}$, with $\det g_n=n$, such that both $\omega$ and $B$ are fixed under its action. This is achieved, for example, with
\begin{equation}
g_n^{-1} =\left(\begin{matrix}
n^3 & 0 & 0 \\
0 & 1/n^2 & 0\\
0 & 0 & 1/n^2
\end{matrix}\right).
\end{equation}
To prove the other inclusion, we must show that, if $\ker\omega\cap\ker B=0$, there is no sequence $\{g_n\}$ with $\det g_n \to\infty$ such that the sequence  $g_n\cdot (B,\omega)$ has a limit.\\
Again, we can fix a basis of $V_5$ in order to achieve
\begin{equation}
B =\left(\begin{matrix}
1 & 0 & 0\\
0 & 1 & 0\\
0 & 0 & b_{33}\\
0 & 0 & 0\\
0 & 0 & 0\\
\end{matrix}\right);\hspace{5pt}\omega =\left(\begin{matrix}
0 & 0 & 1\\
\end{matrix}\right).
\end{equation}
In that case we define $(B_n,\omega_n)$ the pair $g_n\cdot (B,\omega)$. Then we form $M_{n}$ to be the $3\times 3$ matrix whose first two rows are the first two rows of $B_n$ and the third row is $\omega_n$. Then we note that
\begin{equation}
\det M_n =\det g_n^2 \det g_n^{-1}=\det g_n\to \infty
\end{equation}
hence $M_n$ has no limit, so neither does $g_n \cdot (B,\omega)$.\\
\\
We observe that, since $\ker\omega$ is two-dimensional, the condition $\ker\omega\cap\ker B=0$ implies $\operatorname{rk}B\geq 2$, otherwise the kernels would intersect in a non trivial vector space.\\
\\
The critical locus of our superpotential, in the phase $\tau<0$, is described by the following equations in $V^{ss}_-$:
\begin{equation}
Z(d\check{s})=\left\{\begin{matrix}\omega\cdot d\hat{s} = 0 \\ \hat{s} = 0\end{matrix}\right.
\end{equation}
The request of having $\omega\neq 0$ in the kernel of the transpose of $d\hat{s}$ can be rephrased saying that the Jacobian of $\hat{s}$ has a non-trivial kernel and this is not possible if $B$ is maximal rank. This fact, combined with the condition $\operatorname{rk}B\geq 2$, yields $\operatorname{rk}B=2$, which automatically satisfies $\hat{s}=0$.\\
\\
In the following we will determine the explicit expression for the functions $\hat s(B)$ via the pushforward of the general expression of a hyperplane section of the flag. This  determines uniquely a section of $\mathcal U_3(2)$ on $G(3,V_5)$ and we can read $\hat s(B)$ by confronting the result with (\ref{thesection}).
We will adopt the convention of the summation of repeated indices in order to lighten the notation. Furthermore the square brackets encasing a set of indices will mean that a tensor is made antisymmetric with respect to permutation of those indices, namely
\begin{equation*}
T_{[i_1,\dots i_k]}=\frac{1}{k!}\sum_{\sigma\in S_k}\epsilon_\sigma T_{\sigma(i_1)\dots\sigma(i_k)}
\end{equation*}
where $\epsilon_\sigma$ is the sign of the permutation $\sigma$.\\
\\
A general section $S\in H^0(F,\mathcal O(1,1))$ can be written in the following way:
\begin{equation}
S(A,B) =S^{ijklm}\psi_{ijk}(B)\psi_{lm}(A),
\end{equation}
where $A$ is the matrix given by a basis of a representative of a point in $G(2,V_5)$, while $B$ is the same for $G(3,V_5)$, $\psi_{lm}(A)$ is the $2\times 2$ minor of $A$ obtained choosing the rows $l$ and $m$ and $\psi_{ijk}(B)$ is the $3\times 3$ minor of $B$ defined in the same way. Note that the functions $\psi$ are, by definition, completely antisymmetric, thus S will be antisymmetric with respect to $(i,j,k)$ and $(l,m)$.\\
Let us choose a basis of $V_5$ such that $A$ is given by the second and the third columns of $B$. Thus we can use the linearity of $\psi_{klm}(B)$ with respect to the variables $b_{r1}$ and write $S$ in the following ways:
\begin{equation}\label{S(A)}
S(A,B) = S^{ijklm}\psi_{[ij}(A)b_{k]1}\psi_{lm}(A);
\end{equation}
\begin{equation}\label{S(B)}
S(A,B) = S^{ijklm}\psi_{ijk}(B)\frac{\partial}{\partial{b_{p1}}}\psi_{plm}(B)
\end{equation}
From (\ref{S(A)}) we can write the pushforward $s_1$ of $S$ to $G(2,V_5)$: seeing $(b_{11},\dots,b_{51})$ as a vector in $\mathcal Q_{[A]}$, the usual inner product in $V_5$ allows us to define, as an element of $\mathcal Q_2^\vee(2)$, the vector whose $r$-th component is
\begin{equation}
\begin{split}
s_{1,r} &= S^{ijklm}\frac{\partial}{\partial b_{r_1}}\psi_{[ij}(A)b_{k]1}\psi_{lm}(A) \\
&= S^{ijklm}\psi_{[ij}(A)\delta_{k]r}\psi_{lm}(A)
\end{split}
\end{equation}
In a similar way, we can define from (\ref{S(B)}) a section $s_2$ of $\mathcal U_3(2)$, if we note that $\{\partial_{b_{11}},\dots ,\partial_{b_{51}}\}$ define a basis of linear functionals on $\mathcal U_{3[B]}$. We get:
\begin{equation}\label{s2(B)}
s_2=S^{ijklm}\psi_{ijk}(B)\left(\begin{matrix}
		\psi_{1lm}(B)\\
		\psi_{2lm}(B)\\
		\psi_{3lm}(B)\\
        \psi_{4lm}(B)\\
        \psi_{5lm}(B)
\end{matrix}\right).
\end{equation}
In the description of the GLSM, we defined a section of $\mathcal U(2)$ with the following expression:
\begin{equation}\label{thesectionagain}
s_2(B) = B,\hspace{5pt} \hat{s}_1(B)\left(\begin{matrix}
		b_{11}\\
		b_{21}\\
		b_{31}\\
        b_{41}\\
        b_{51}
\end{matrix}\right)
+\hat{s}_2(B)\left(\begin{matrix}
		b_{12}\\
		b_{22}\\
		b_{32}\\
        b_{42}\\
        b_{52}
\end{matrix}\right)
+\hat{s}_3(B)\left(\begin{matrix}
		b_{13}\\
		b_{23}\\
		b_{33}\\
        b_{43}\\
        b_{53}
\end{matrix}\right).
\end{equation}
Confronting the last equation with (\ref{s2(B)}) leads us to write the following expression for $\hat s(B)$:
\begin{equation}\label{shat}
\hat{s}_r(B) = S^{ijklm}\psi_{ijk}(B)\frac{\partial}{\partial b_{pr}}\psi_{plm}(B).
\end{equation}
In the above, we wrote $\hat s$ as a function defined on $\Hom(\mathbb C^3,V_5)\backslash\{\operatorname{rk}<3\}$ with values in $\mathbb C^3$, but we note that, as expected, it extends by zeros to a function on all $\Hom(\mathbb C^3,V_5)$. Namely, if the rank of $B$ is smaller than three, all the $3\times3$ minors vanish, so $\hat s(B)=0$.
Then, by inspection, we see that $\hat{s}_i$ is linear in the entries of the $i$-th column of $B$ and  quadratic in the entries of the other two columns.\\
\\
Now, since $\operatorname{rk} B=2$, let us choose a basis where the first column of $B$ vanishes. This reduces the system of 15 equations $\omega\cdot d\hat{s} = 0$ to five quartics. The overall factor $\omega_1$ appearing in each of them can be discarded since the choice of having $b_1=0$ and the condition $\ker B\cap \ker\omega=0$ imply $\omega_1\neq 0$. Moreover, since the five quartics are independent on the entries of $b_1$, they are quadrics with respect to the $2\times 2$ minors of the matrix obtained discarding the first column from $B$. Summing all up, the critical locus for the phase $\tau<0$ is given by
\begin{equation}
\operatorname{Crit}(\check s) = \{(B,\omega): \ker B\cap\ker\omega=0; \operatorname{rk}B=2, \partial_{b_{1i}}\hat s_1=0\}.
\end{equation}
Finally, computing the derivatives of (\ref{shat}) with respect to the entries of the first column of $B$, we get
\begin{equation}
\frac{\partial}{\partial b_{p1}}\hat{s}_1(B) = S^{ijklm}\psi_{[ij}(A)\delta_{k]q}\psi_{lm}(A)
\end{equation}
which are exactly the quadrics appearing in (\ref{S(A)}).\\
\\
\noindent So far, we got no conditions on $\omega$ except for $\omega_1\neq 0$: the  critical locus of the superpotential in the chamber $\tau<0$ is a bundle $\mathcal E$ over the zero locus of the five quadrics in $G(2,V_5).$
However, we still have a $GL(3,\mathbb C)$-action on this bundle: a matrix $B$ with zeros in the first column is fixed by a stabilizer of $GL(3,\mathbb C)$ given by matrices of the form
\begin{equation}
g_n^{-1} =\left(\begin{matrix}
a & b & c \\
0 & 1 & 0\\
0 & 0 & 1
\end{matrix}\right)
\end{equation}
with $a\neq 0$ and all the triples $(\omega_1,\omega_2,\omega_3)$ with nonvanishing $\omega_1$ lie in the same orbit with respect to this stabilizer, which acts freely on them. So the action is transitive and free. Quotienting $\mathcal E$ with respect to the $GL(3,\mathbb C)$-action, yields exactly the Calabi--Yau threefold $X$, this proves the compatibility of our GLSM construction with the description of diagram (\ref{bigdiagram}).
\subsection*{Acknowledgements}
We would like to thank Atsushi Ito, John Christian Ottem and Paul Francis de Medeiros for clarifying discussions, and Shinnosuke Okawa for  showing us the reference \cite{IMOUnew}. We are also grateful to the referee for pointing out an error in a previous version of the paper.
M. Rampazzo was supported by the PhD program at the University of Stavanger.  M. Kapustka was supported by the project NCN 2013/10/E/ST1/00688. 

\vskip10pt
\author{Micha\l{} Kapustka:}
\address{University of  Stavanger, Department of Mathematics and Natural Sciences, NO-4036 Stavanger, Norway \\
Institute of Mathematics of the Polish academy of Sciences ul. Śniadeckich 8, 00-656 Warszawa, Poland.}
\email{michal.kapustka@uis.no}
\\
\\
\author{Marco Rampazzo:}
\address{University of  Stavanger, Department of Mathematics and Natural Sciences, NO-4036 Stavanger, Norway.}
\email{marco.rampazzo@uis.no}
\end{document}